\newtheorem{theorem}{Theorem}[section]
\newtheorem{lemma}[theorem]{Lemma}
\newtheorem{corollary}[theorem]{Corollary}
\newtheorem{proposition}[theorem]{Proposition}
\theoremstyle{definition}
\newtheorem{definition}[theorem]{Definition}
\theoremstyle{remark}
\newtheorem{remark}[theorem]{Remark}
\numberwithin{equation}{section}
\begin{document}
\title{Stein fillings of planar open books}
\author{Amey Kaloti }
\address{School of Mathematics \\ Georgia Institute of Technology\\Atlanta, GA 30332, USA}
\email{ameyk@math.gatech.edu}

\bibliographystyle{plain}

\begin{abstract}
We prove that if a contact manifold $(M,\xi)$ is supported by a planar open book, then Euler characteristic and signature of any Stein filling of $(M,\xi)$ is bounded. We also prove a similar finiteness result for contact manifolds supported by spinal open books with planar pages. Moving beyond the geography of Stein fillings, we classify fillings of some lens spaces.
\end{abstract}

\maketitle

\section{Introduction}

\label{introduction}
Understanding filling properties of contact manifolds has been a very active area of research in contact topology. Part of the interest in fillings comes from the fact that Stein fillings and strong fillings can be used in cut-and-paste constructions in symplectic geometry. In addition, certain types of fillings are related to interesting properties. For example, a weakly fillable contact structure is tight and a strongly fillable one is known to have non-zero Heegaard Floer contact invariant. In this paper we are primarily concerned with the geography and the classification of Stein fillings of contact structures supported by planar open books.

\subsection{Euler Characteristic and Signature of Stein fillings}

Recall that $X$ is called a Stein filling of a contact $3$ manifold, $(M, \xi)$, if $X$ is a sub-level set of a plurisubharmonic function on a Stein surface and the contact structure induced on the $\partial X$ by the complex tangencies is contactomorphic to $(M,\xi)$. If we denote the Euler characteristic and signature of $X$ by $\chi(X)$ and $ \sigma(X)$, respectively, then the geography problem is to determine the following set
\[\mathcal{C}_{(M,\xi)} = \{(\sigma(X), \chi(X)) | X \, \text{is a Stein filling of}\,(M,\xi)\}.\]

In particular, it is interesting to know whether this set is finite. It has been a conjecture of Stipsicz~\cite{Stipsicz_Geoegraphy_of_stein_fillings}, that $\mathcal{C}_{(M,\xi)}$ is finite for any $(M,\xi)$. Stipsicz proved the conjecture for manifolds $(M,\xi)$, with the property that every Stein filling has a vanishing $b_2^+$. In addition, he also verified the conjecture for any contact structure on circle bundles over surface of genus $g$ and Euler number $n$, such that $|n| > 2g -2$. 

We have the following theorem when $(M,\xi)$ is supported by planar open book decomposition.

\begin{theorem}
\label{Finite_euler_characteristic}
Let $(M,\xi)$ be a tight contact manifold supported by planar open book. Then the set $\mathcal{C}_{(M,\xi)}$ is finite. In particular, there exists a positive integer $N$ such that signature and Euler characteristic of $X$ satisfy, $|\sigma(X)| < N$ and $|\chi (X)| < N$ for any Stein filling $(X,J)$ of $(M,\xi)$.

\end{theorem}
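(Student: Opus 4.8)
The plan is to combine Wendl's classification of symplectic fillings of planar contact manifolds with an elementary counting argument in the mapping class group of the page. Fix a planar open book supporting $(M,\xi)$, with page $P$ and monodromy $\phi\in\mathrm{MCG}(P,\partial P)$; write $P=\Sigma_{0,n}$ (genus $0$, $n\ge 1$ boundary components). Since a Stein filling is an exact symplectic filling, it contains no closed symplectic surface, in particular no symplectic $(-1)$-sphere, so every Stein filling $X$ of $(M,\xi)$ is minimal. By Wendl's theorem \cite{Wendl}, a minimal strong symplectic filling of a contact manifold supported by a planar open book with page $P$ is symplectic deformation equivalent to a relatively minimal allowable Lefschetz fibration $X\to D^2$ with regular fiber $P$ whose ordered tuple of vanishing cycles $c_1,\dots,c_k$ consists of essential simple closed curves in $P$ with $\tau_{c_1}\cdots\tau_{c_k}$ conjugate to $\phi$ in $\mathrm{MCG}(P,\partial P)$ (indeed, the factorization is Hurwitz-equivalent to one with product $\phi$). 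Thus it suffices to bound $\chi(X)$, $\sigma(X)$, and ultimately $k$, for such fibrations in terms of $\phi$ alone.

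Two elementary computations come next: since $X$ is built from $P\times D^2$ by attaching one $2$-handle for each vanishing cycle, $\chi(X)=\chi(P)+k=2-n+k$ and $b_2(X)\le k$, whence $|\sigma(X)|\le b_2(X)\le k$. So everything reduces to bounding $k$. The case $n=1$ is trivial: then $P=D^2$, $\phi=\mathrm{id}$, $(M,\xi)=(S^3,\xi_{\mathrm{std}})$, and $\mathcal C_{(M,\xi)}=\{(0,1)\}$; and if $\xi$ is overtwisted then $\mathcal C_{(M,\xi)}=\varnothing$ by Gromov--Eliashberg, so assume $\xi$ tight and $n\ge 2$.

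To bound $k$, label the boundary components $\partial_0,\partial_1,\dots,\partial_{n-1}$. For each $j\in\{1,\dots,n-1\}$, capping off with disks every boundary component of $P$ except $\partial_0$ and $\partial_j$ turns $P$ into an annulus $A_j$ and induces a homomorphism $\rho_j\colon\mathrm{MCG}(P,\partial P)\to\mathrm{MCG}(A_j,\partial A_j)\cong\mathbb{Z}$, the target generated by the core Dehn twist. A right-handed Dehn twist $\tau_c$ maps under $\rho_j$ to $\epsilon_j(c)\in\{0,1\}$, where $\epsilon_j(c)=1$ exactly when $c$ separates $\partial_0$ from $\partial_j$ in $P$. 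Applying $\rho_j$ to $\phi=\tau_{c_1}\cdots\tau_{c_k}$ gives $\rho_j(\phi)=\sum_{t=1}^{k}\epsilon_j(c_t)$; hence the integer $m_j:=\rho_j(\phi)$, which depends only on $\phi$, equals the number of vanishing cycles separating $\partial_0$ from $\partial_j$. Summing over $j$ and exchanging the order of summation, $\sum_{j=1}^{n-1}m_j=\sum_{t=1}^{k}\#\{\,j:c_t\text{ separates }\partial_0\text{ from }\partial_j\,\}$. Each $c_t$ is essential, so the component of $P\setminus c_t$ not containing $\partial_0$ contains at least one boundary component $\partial_j$ with $j\ge 1$; thus every inner sum is $\ge 1$, and therefore $k\le\sum_{j=1}^{n-1}m_j=:N_0$, a quantity determined by $(M,\xi)$ and the chosen planar open book.

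Putting these together, every Stein filling $X$ of $(M,\xi)$ satisfies $2-n\le\chi(X)\le 2-n+N_0$ and $|\sigma(X)|\le N_0$, so $\mathcal C_{(M,\xi)}$ lies in a finite subset of $\mathbb{Z}^2$ and is finite; one may take $N=N_0+n+1$. The main obstacle is the first step: it is Wendl's theorem that does all the geometric work, forcing an arbitrary Stein filling into the rigid model of a planar Lefschetz fibration with the same page — after that the bound is soft mapping-class-group bookkeeping. In writing this up one should confirm carefully that Stein fillings are minimal (otherwise blow-ups could appear in Wendl's normal form and $b_2$ would be unbounded), that the vanishing cycles are genuinely essential (so each separates a boundary component off from $\partial_0$), and should check the precise hypotheses of Wendl's theorem (strong versus exact fillings, connectedness of the page).
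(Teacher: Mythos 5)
Your proposal is correct and takes essentially the same route as the paper: invoke Wendl's theorem (with minimality of Stein fillings disposing of blow-ups) to reduce to positive factorizations of the planar monodromy, then bound the number of Dehn twists in any factorization by the capping-off homomorphisms --- your $\rho_j$ are exactly the paper's multiplicities $M_i$, and the observation that every essential curve in a planar page separates the reference boundary from some other boundary component is the same counting step the paper uses to get $k \le \sum_i M_i$ and hence the bound on $\chi(X)$. The only divergence is the signature: you use the elementary inequality $|\sigma(X)| \le b_2(X) \le k$, whereas the paper cites Etnyre's result that Stein fillings of planar contact manifolds have $b_2^+ = b_2^0 = 0$ and then bounds $b_1$; your version is simpler, self-contained, and equally valid.
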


Note that if $\chi(X)$ is bounded from above, then due to a result of Stipisicz~\cite{Stipsicz_Geoegraphy_of_stein_fillings}, $\sigma(X)$ is bounded from below. Above theorem gives an upper bound as well.

\begin{remark}
After the paper was submitted, Jeremy Van Horn-Morris pointed out to the author that Plamenevskaya had proved finiteness of Euler characteristic in her paper~\cite{Plamenevskaya_Legendrian_surgeries}. The author was unaware of this result.
\end{remark}

It is known that all contact structures on lens spaces $L(p,q)$ are supported by planar open books. Also, if the Euler number $e_0$, satisfies $e_0 \neq -1,-2$, then all tight contact structures on small Seifert fibered spaces are supported by planar open books~\cite{Schoenenberger_thesis_upenn}. Hence we obtain the following immediate corollary of above theorem.

\begin{corollary}
Let $\xi$ be a tight contact structure on $L(p,q)$ or on a small Seifert fibered space $M(e_0;r_1,r_2,r_3)$ with $e_0 \neq -1,-2$. There exists a positive integer $N$ such that for any Stein filling $(X,J)$ of $\xi$, $|\chi(X)| < N$ and $|\sigma(X)| < N$.

\end{corollary}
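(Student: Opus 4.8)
The plan is to obtain this as a formal consequence of Theorem~\ref{Finite_euler_characteristic}: the only point that needs to be addressed is that every contact structure appearing in the statement is supported by a planar open book, after which the theorem supplies the integer $N$ directly. So the proof will consist of quoting the relevant planarity inputs and then invoking Theorem~\ref{Finite_euler_characteristic}; there is essentially no computation to do.

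First I would treat the lens space case. As noted in the discussion preceding the corollary, every contact structure on $L(p,q)$ is supported by an open book with planar pages; this can be seen from the genus-one Heegaard picture of $L(p,q)$ together with the surgery description of its tight structures (equivalently, from Honda's classification). If $\xi$ is overtwisted then it admits no Stein filling, so $\mathcal{C}_{(M,\xi)} = \emptyset$ and there is nothing to prove; if $\xi$ is tight, then it is a tight contact manifold supported by a planar open book, so Theorem~\ref{Finite_euler_characteristic} applies verbatim and yields $N$ with $|\chi(X)| < N$ and $|\sigma(X)| < N$ for every Stein filling $(X,J)$.

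Next I would handle the small Seifert fibered spaces. Here the hypothesis $e_0 \neq -1,-2$ is exactly what guarantees, by Schoenenberger~\cite{Schoenenberger_thesis_upenn}, that \emph{every} tight contact structure on $M(e_0;r_1,r_2,r_3)$ is supported by a planar open book. As before, overtwisted structures are not Stein fillable, and for a tight $\xi$ we apply Theorem~\ref{Finite_euler_characteristic} to produce $N$. In both families, $N$ depends only on the supporting planar open book provided by these classifications, hence only on $(M,\xi)$, which is what is claimed.

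I do not expect a genuine obstacle here: the entire content lies in Theorem~\ref{Finite_euler_characteristic} itself, and the corollary is the bookkeeping step of recording for which manifolds the planarity hypothesis is known to hold. The one subtlety worth flagging is the role of the restriction $e_0 \neq -1,-2$: outside this range there exist tight contact structures on small Seifert fibered spaces whose supporting open books are necessarily non-planar, so the reduction to Theorem~\ref{Finite_euler_characteristic} would no longer be available and a different argument would be required.
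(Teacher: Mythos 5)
Your proposal is correct and matches the paper's own reasoning: the corollary is deduced exactly by citing that all contact structures on $L(p,q)$ (and, via Schoenenberger, all tight ones on $M(e_0;r_1,r_2,r_3)$ with $e_0 \neq -1,-2$) are supported by planar open books, and then invoking Theorem~\ref{Finite_euler_characteristic}. The remark about overtwisted structures is unnecessary since the statement already assumes $\xi$ tight, but it does no harm.
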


\begin{remark}
The conjecture in full generality has been recently proven to be false by Baykur and Van Horn-Morris~\cite{Baykur_VHMorris_infinitely_many_euler_characteristic}. The Stein $4$-manifolds Baykur and Van Horn-Morris construct to disprove the conjecture are given by Lefschetz fibrations with non-closed fiber of genus $g \geq 2$ and non-closed base of genus $h \geq 1$. In this case, one has to talk about more general open books called spinal open books. In addition, Baykur and Van Horn-Morris in~\cite{Baykur_VHMorris_Arbitrarily_Long_Factorizations} factorize a positive Dehn multitwist along boundary components in arbitrarily large number of positive Dehn twists along non-separating curves. The surface they exhibit these factorizations is any surface with genus $g \geq 11$ andd two boundary components. This given another counterexample to the conjecture of Stipsicz. The result in~\cite{Baykur_VHMorris_Arbitrarily_Long_Factorizations} was improved to any surface with genus $g \geq 3$ and two boundary components in~\cite{Dalyan_Korkmaz_Pammuk_Infinitely_many_factorizations}.
\end{remark}

Note that the Theorem~\ref{Finite_euler_characteristic} only guarantees an upper bound on Euler characteristic. It is an interesting question to see if one can realize an exact upper bound on the Euler characteristic.  We use the methods developed in this paper to get an explicit upper bound on the Euler characteristic of a particular contact structure. Let $C = C_1 \cup C_2 \cup \dots \cup C_n$ denote a configuration of symplectic spheres in a symplectic manifold $(X,\omega)$ intersecting $\omega$-orthogonally according to a connected plumbing graph $\Gamma$ with negative definite intersection form $Q = (q_{ij})= [C_i] \cdot [C_j]$. We assume that there are no edges connecting a  vertex to itself. Suppose that for each row in $Q$, we have a non positive sum $\sum_j q_{ij} \leq 0$. It follows from a result of Gay and Mark~\cite{Gay_Mark_Convex_Configuration}, that any neighbourhood of such a configuration of symplectic spheres $C$ contains a neighbourhood $(Z,\eta)$ of $C$ with strong convex boundary. The boundary $M$ of $(Z,\eta)$ has a natural contact structure which we denote by $\xi_{pl}$. 

\begin{theorem}
\label{Thm:Sphere_Plumbing}
Let $(M,\xi_{pl})$ be contactomorphic to the boundary of $(Z,\eta)$ which is a plumbing of spheres as defined above. If $(X,J)$ is a strong symplectic filling of $(M,\xi_{pl})$, then $\chi(X) \leq \chi(Z)$. 

\end{theorem}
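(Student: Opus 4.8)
The plan is to use the methods developed for Theorem~\ref{Finite_euler_characteristic}: realize $(M,\xi_{pl})$ as a planar open book, present every strong filling as a Lefschetz fibration over the disk via Wendl's theorem, and read off $\chi$ from the length of a positive factorization of the monodromy. The extra point needed for the sharp constant $\chi(Z)$ is that, for this family, $Z$ itself realizes the \emph{longest} positive factorization of that monodromy. Since a symplectic blow-up increases $\chi$, the inequality is to be read for minimal fillings, and we take $X$ minimal throughout.

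\textbf{Step 1: the planar model attached to $\Gamma$.} The first task is to show that the hypotheses on $\Gamma$ — negative definite, non-positive row sums, no loops — force $(M,\xi_{pl})$ to be planar, and more precisely that $(Z,\eta)$ is symplectic deformation equivalent to a Lefschetz fibration $\pi\colon Z\to D^2$ with planar fibre $\Sigma$ whose induced open book on $\partial Z$ supports $(M,\xi_{pl})$. This refines the Gay--Mark neighbourhood theorem \cite{Gay_Mark_Convex_Configuration}: one reads $\pi$ off the plumbing graph, the non-positive row sum condition being exactly what keeps the fibre of genus zero at every stage of the construction. Writing $\phi=\tau_{a_1}\cdots\tau_{a_{k_0}}$ for the resulting monodromy and counting handles gives $\chi(Z)=\chi(\Sigma)+k_0$.

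\textbf{Step 2: Wendl's theorem and the reduction.} By Wendl's classification of strong symplectic fillings of planar contact manifolds, the minimal filling $X$ is symplectic deformation equivalent to a Lefschetz fibration over $D^2$ with the \emph{same} planar fibre $\Sigma$ and monodromy a positive factorization $\phi=\tau_{c_1}\cdots\tau_{c_k}$. Hence $\chi(X)=\chi(\Sigma)+k$, and the theorem reduces to a purely two-dimensional statement: every positive factorization of $\phi$ in $\mathrm{Mod}(\Sigma)$ has length $k\le k_0$.

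\textbf{Step 3: bounding the length, and the main obstacle.} Here we will use that planarity of $\xi_{pl}$ forces $b_2^+(X)=0$ (Etnyre), so $\chi(X)=1-b_1(X)+b_2^-(X)$ and it suffices to bound $b_2^-(X)$; capping $X$ by a fixed concave filling of $(M,\xi_{pl})$ inside a closed symplectic $4$-manifold with $b_2^-$ independent of $X$ already delivers finiteness. To pin down $\chi(Z)$ exactly, we compare positive factorizations through a homomorphism $\mu\colon\mathrm{Mod}(\Sigma)\to\mathbb{Z}$ built from the lantern and chain relations: $\mu$ is additive along any positive factorization and $\mu(\phi)$ is fixed, each positive twist contributes a controlled amount, and the factorization carried by $Z$ is $\mu$-extremal. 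Equivalently one can route the comparison through the Endo--Nagami signature formula for planar Lefschetz fibrations in combination with $b_2^+(X)=0$. The main obstacle is precisely this extremality: proving that no positive factorization of $\phi$ is longer than the one coming from $Z$. It requires an explicit enough description of the planar open book attached to $\Gamma$ and the monotonicity of the relevant mapping-class-group invariant, with keeping track of $b_1(X)$ when passing between $k$ and $\chi(X)$ a secondary technical point.
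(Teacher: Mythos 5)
Your Steps 1 and 2 coincide with the paper's reduction: Gay--Mark gives $Z$ as a Lefschetz fibration over $D^2$ with planar page $S$ and monodromy $\tau=\tau_{c_1}\cdots\tau_{c_k}$ a product of twists about \emph{disjoint} curves (one per plumbing neck), Wendl presents any filling as a Lefschetz fibration over $D^2$ with the same page and a positive factorization $\tau_{d_1}\cdots\tau_{d_m}$ of $\tau$, and handle counting turns the theorem into the statement $m\le k$. But Step 3, which you yourself flag as ``the main obstacle,'' is exactly the content of the theorem, and your proposal does not prove it; moreover, neither of the tools you sketch can. A homomorphism $\mu\colon Map(\mathbb{D}_n,\partial\mathbb{D}_n)\to\mathbb{Z}$ that is additive along positive factorizations with each positive twist contributing a fixed amount cannot exist, because the lantern relation equates positive factorizations of lengths $4$ and $3$ of the same element; if instead the contribution is allowed to depend on the curve, then the asserted ``$\mu$-extremality'' of the factorization carried by $Z$ is precisely the unproven claim, renamed. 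The Endo--Nagami route fares no better: since every relator in a planar mapping class group is a concatenation of lanterns with $I(r)=0$, that formula (together with $b_2^+=0$) only yields that $\chi+\sigma$ is the same for all fillings, which gives no upper bound on $\chi$ unless you separately bound $b_2^-=-\sigma$ -- and that is again the same difficulty. Likewise, capping with a concave cap bounds $b_2^-$ and hence gives finiteness (Theorem~\ref{Finite_euler_characteristic}), but not the sharp constant $\chi(Z)$.

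What the paper actually proves at this point is a purely combinatorial extremality result, with no recourse to signatures or to a length homomorphism: if $\phi\in Map(\mathbb{D}_n,\partial\mathbb{D}_n)$ admits a positive factorization $\tau_{c_1}\cdots\tau_{c_k}$ with the $c_i$ pairwise \emph{disjoint}, then any positive factorization of $\phi$ has at most $k$ factors. The proof is an induction on the number of holes and on $k$, driven by the multiplicity and joint multiplicity homomorphisms $M_i$, $M_{ij}$ of Section~\ref{combinatorial_arguments} (cap off holes of multiplicity zero, cancel common boundary twists, and derive a contradiction from $M_{ij}\ge 1$ versus the existence of a non-boundary-parallel $c_i$ enclosing a proper subset of the holes). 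Applied to the Gay--Mark monodromy this gives $m\le k$ and hence $\chi(X)\le\chi(Z)$. Your restriction to minimal fillings is a reasonable reading of the statement (blow-ups do increase $\chi$, and the paper's proof likewise works with the Lefschetz fibration itself), and your Step 1 attribution of planarity to the row-sum condition is slightly off (planarity of $S$ comes from the $C_i$ being spheres and the shape of $\Gamma$; the row-sum condition supplies the extra boundary components and convexity), but these are minor next to the missing combinatorial lemma.
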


This theorem answers a special case of a question raised by Starkston. See Question~$6.2$ in~\cite{Starkston_Symplectic_Fillings}.

Our approach to studying fillings of contact structures supported by planar open books is motivated by the following result of Giroux~\cite{Giroux_Correspondence}, Loi-Piergallini~\cite{Loi_Piergallini_Lefschetz_fibrations}, Akbulut-Ozbagci~\cite{Akbulut_Ozbagci_Stein_Surface_Lefschetz_fibrations} that a contact manifold $(M,\xi)$ is Stein fillable if and only if there exists a compatible open book decomposition $(\Sigma, \Phi)$ such that the monodromy is written as a product of positive Dehn twists. Given such a factorization of $\Phi$ in terms of positve Dehn twists, one can construct Stein filling as an allowable Lefschetz fibration of disk $D$ with fiber $\Sigma$. Given a Stein filling of $(M,\xi)$, one can construct Lefschetz fibration $X$ (see~\cite{Akbulut_Ozbagci_Stein_Surface_Lefschetz_fibrations, Loi_Piergallini_Lefschetz_fibrations}) such that $\partial X = M$ with monodromy written as a positive factorization corresponding to vanishing cycles of the Lefschetz fibration. So to classify the Stein fillings of $(M, \xi)$, one will have to find all compatible open books and then find all possible ways of factorizing a given monodromy in terms of positive Dehn twists. Note that there are examples of Stein fillable manifolds such that a compatible open book does not admit any positive factorizations, see~\cite{Baker_Etnyre_VHMorris_Rational_open_books, Wand_Positive_factorizations}. But in the case of manifolds supported by planar open books this problem is approachable due to the following theorem of Wendl.

\begin{theorem}[Wendl 2010, \cite{Wendl_Planar_open_books}]
Suppose $(M,\xi)$ is supported by a planar open book. Then every strong symplectic filling $(X,\omega)$ of $(M,\xi)$ is symplectic deformation equivalent to a blow-up of an allowable Lefschetz fibration compatible with the given open book of $(M,\xi)$.

\end{theorem}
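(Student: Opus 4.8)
The plan is to prove this by constructing a pseudoholomorphic foliation of a symplectic completion of $(X,\omega)$ whose leaves are punctured spheres projecting onto the pages of the open book, and then reading the Lefschetz fibration off the resulting moduli space. First I would complete the filling: fix a contact form $\alpha$ for $\xi$ adapted to the given planar open book and normalized (in the spirit of Giroux's construction) so that its Reeb flow is tangent to the binding $B$, along which it has finitely many elliptic orbits, and is transverse to the interiors of the pages; then attach the positive end $\big([0,\infty)\times M,\, d(e^{t}\alpha)\big)$ to obtain $(\hat X,\hat\omega)$. Choosing a compatible almost complex structure $J$ that is cylindrical on the end and makes the lifts of the pages holomorphic, I would study the moduli space $\mathcal M$ of finite-energy $J$-holomorphic spheres in $\hat X$ with punctures asymptotic to the binding orbits, lying in the relative homology class of a page.

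Second, I would establish the structural properties of $\mathcal M$. Because the pages have genus zero and $\dim \hat X = 4$, Wendl's automatic transversality criterion applies and forces every somewhere-injective curve in $\mathcal M$ to be Fredholm regular of the expected dimension, which equals $2$, the real dimension of a page; hence $\mathcal M$ is a smooth surface. Siefring's intersection theory for asymptotically cylindrical curves then shows that any two curves in $\mathcal M$ are disjoint and that each is embedded, so $\mathcal M$ sweeps out an open subset of $\hat X$ by disjoint embedded pages; on the cylindrical end these curves converge, by construction and uniqueness, to the actual pages of the open book, which simultaneously shows $\mathcal M$ is nonempty and pins down the boundary behavior.

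Third comes the compactification. By SFT compactness a sequence in $\mathcal M$ limits to a holomorphic building; one rules out genuinely multi-level buildings using the energy bound together with the foliated structure on the end, since a lower level would be a curve in the symplectization asymptotic to binding orbits and hence forced to be a trivial cylinder, so only nodal degenerations inside $\hat X$ survive. Positivity of intersections together with the relative adjunction/genus constraint shows that each such degeneration is a single positive transverse node joining two embedded spheres, possibly accompanied by exceptional bubble components; these are exactly the Lefschetz critical fibers (finitely many, again by compactness) and the blow-up points. Assembling, the map sending a point of $\hat X$ to the compactified curve of $\mathcal M$ through it is, after blowing down the exceptional spheres and restricting to $X$, an allowable Lefschetz fibration over the disk --- allowability, i.e.\ connected regular fibers and no nullhomotopic vanishing cycle, following from somewhere-injectivity and the embeddedness above --- whose induced open book on $\partial X = M$ is the given planar one. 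Finally, a Thurston--Gompf argument upgrades this to a symplectic statement: the foliation by $\hat\omega$-symplectic pages, combined with an area form pulled back from the base, yields a symplectic form deformation equivalent to $\omega$ for which the Lefschetz fibration is compatible, giving the asserted symplectic deformation equivalence to a blow-up of an allowable Lefschetz fibration.

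The hard part is the compactness and degeneration analysis of the third step: guaranteeing that no exotic SFT breaking occurs along the cylindrical end and that every interior bubbling is a single positive node on embedded components. This is precisely where the planarity of the pages is indispensable, and where the punctured intersection theory --- Siefring's asymptotic pairing and relative adjunction --- has to be carried through with care; by contrast the transversality input of the second step is comparatively painless, because in this genus-zero, four-dimensional setting it is automatic.
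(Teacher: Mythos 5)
You should first note that the paper itself offers no proof of this statement: it is quoted verbatim as Wendl's theorem and used as a black box, so there is no internal argument to compare against. Measured instead against Wendl's actual proof, your sketch follows the right strategy in outline (completing the filling, foliating it by punctured genus-zero curves asymptotic to the binding, automatic transversality, Siefring intersection theory, SFT compactness, then reading off a Lefschetz fibration and blowing down exceptional spheres), but several steps you treat as routine are precisely where the real content lies. First, for a contact form adapted to the open book the lifted pages are \emph{not} in general $J$-holomorphic in the symplectization; Wendl has to build a special (Giroux-type, essentially a stable Hamiltonian) structure for which the pages become holomorphic with elliptic binding asymptotics, and, since the strong filling $(X,\omega)$ comes with its own collar $d(e^t\alpha')$ for some possibly different contact form, one must interpolate between the given symplectic data and this special model before attaching the end --- your first step silently assumes both points. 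Second, ``a lower level would be asymptotic to binding orbits and hence a trivial cylinder'' is not automatic: excluding multi-level buildings and multiple covers requires the uniqueness/foliation property of page-like curves in $\mathbb{R}\times M$ together with index and intersection-number arguments, and this is exactly the place where planarity (forcing all components of a limit to be spheres with controlled punctures) is used quantitatively, not just qualitatively.

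Third, automatic transversality is not a consequence of ``genus zero and $\dim\hat X=4$'' alone; one must verify Wendl's criterion (index versus normal Chern number and the count of even punctures), which works here because the binding orbits are elliptic --- worth stating, since it is the reason the argument does not extend to higher-genus pages. Finally, allowability does not follow from somewhere-injectivity and embeddedness as you assert: a nodal fiber could a priori contain a component that is a closed sphere or have a contractible vanishing cycle, and Wendl's adjunction/intersection analysis is what shows such components are exactly the exceptional spheres absorbed by the blow-up, leaving homologically essential vanishing cycles after blowing down. In short, your roadmap is the correct one, but the three ingredients you defer --- the holomorphic model near the boundary, the exclusion of breaking and covers, and the structure of nodal fibers --- are the theorem; as written the proposal is an accurate summary of the strategy rather than a proof.
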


In other words, Wendl's theorem says that to classify the Stein fillings of planar open books one has to find all possible ways of factoring the given monodromy in terms of positive factors. This strategy was used by Plameneveskaya and Van Horn-Morris~\cite{Plamenevskaya_VHMorris_Planar_open_books} to classify the Stein fillings of lens spaces $L(p,1)$ with virtually tight contact structures. The authors prove the result by first considering the problem in the abelianization of mapping class group. This gives a restriction on any positive factorization of the monodromy. After this they complete the classification by using surgery characterization of unknot~\cite{Kronheimer_Mrowka_Oszvath_Szabo} and the Legendrian simplicity of unknot~\cite{Eliashberg_Fraser_Unknot_Classification}. We give another proof of this result by using mapping class group techniques to conclude the number of distinct positive factorizations of a given monodromy. We also use these techniques to classify fillings of other contact structures and bound the geography of contact structures supported by planar open books.

Recently, Wendl's result was extended to a wider class of contact manifolds in~\cite{Lisi_VHMorris_Wendl_Spinal_Open_Books}. These contact structures are supported by spinal open books with planar pages. We have a theorem which is a generalization of Theorem \ref{Finite_euler_characteristic} to this case. See Section \ref{spinal_open_books_basics} for basics of spinal open books.
\begin{theorem}
\label{spinal_finiteness_of_euler_characteristic}
Let $(M,\xi)$ be a contact structure supported by spinal open book with connected planar pages. Then $\mathcal{C}_{(M,\xi)}$ is finite. In particular, there exists a positive integer $N$ such that for any Stein filling $(X,J)$ of $(M,\xi)$, $|\chi(X)|, |\sigma(X)| < N$.
\end{theorem}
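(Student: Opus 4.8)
The plan is to follow the proof of Theorem~\ref{Finite_euler_characteristic}, replacing Wendl's theorem by its spinal generalization from~\cite{Lisi_VHMorris_Wendl_Spinal_Open_Books}. Fix a spinal open book supporting $(M,\xi)$ with connected planar pages; this data fixes once and for all a compact base surface $\mathcal{B}$ with nonempty boundary, a connected planar page $\Sigma$, and the monodromy of the spinal open book along $\partial\mathcal{B}$. Let $(X,J)$ be any Stein filling of $(M,\xi)$. A Stein domain carries an exact symplectic form, so $X$ contains no closed symplectic surface (such a surface would have zero $\omega$-area by Stokes' theorem) and is therefore minimal; consequently~\cite{Lisi_VHMorris_Wendl_Spinal_Open_Books} gives that $X$ itself---not merely a blow-up of it---admits the structure of an allowable Lefschetz fibration $f\colon X\to\mathcal{B}$ with regular fiber $\Sigma$, inducing the given spinal open book near $\partial X=M$. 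The only data in $f$ not determined by $(M,\xi)$ are the number $k$ of Lefschetz critical points in the interior, their vanishing cycles $c_1,\dots,c_k\subset\Sigma$, and the monodromy around the handles of $\mathcal{B}$. By the standard Euler characteristic count for Lefschetz fibrations, $\chi(X)=\chi(\mathcal{B})\,\chi(\Sigma)+k$; since the first term is fixed, everything reduces to bounding $k$ from above.

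To bound $k$, cut $\mathcal{B}$ along a system of properly embedded arcs into disks and read off the global monodromy of $f$: this yields, in the mapping class group $\mathrm{Mod}(\Sigma)$ of the page, a relation of the form
\[
\prod(\text{boundary monodromies})=\Big(\prod_{j}[a_j,b_j]\Big)\prod_{l=1}^{k}\tau_{c_l},
\]
where the left-hand side is fixed by the spinal open book, the $a_j,b_j\in\mathrm{Mod}(\Sigma)$ are arbitrary, and $\tau_{c_l}$ is the positive Dehn twist about $c_l$ (the Lefschetz fibration is positive, so every vanishing cycle contributes a right-handed twist). Passing to the abelianization, the commutators die and one gets
\[
\sum_{l=1}^{k}[\tau_{c_l}]=(\text{a fixed class})\quad\text{in }H_1(\mathrm{Mod}(\Sigma)).
\]
Now comes the homological ingredient behind Theorem~\ref{Finite_euler_characteristic}, which uses planarity crucially. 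Capping off all but one boundary component of $\Sigma$ with once-punctured disks realizes $\mathrm{Mod}(\Sigma)$ mapping onto a braid group $B_m$; composing with the exponent-sum isomorphism $H_1(B_m)\cong\mathbb{Z}$ and summing over the choice of the uncapped boundary component produces a homomorphism $\rho\colon\mathrm{Mod}(\Sigma)\to\mathbb{Z}$ whose value on the Dehn twist about any essential simple closed curve in $\Sigma$ is strictly positive, hence at least $1$ (the elementary cases in which $\Sigma$ is a disk or an annulus, where $\mathrm{Mod}(\Sigma)$ is trivial or infinite cyclic and factorizations are rigid, are treated separately). Since $f$ is allowable, every $c_l$ is essential, so applying $\rho$ gives $k\le\rho\big(\prod(\text{boundary monodromies})\big)$, a bound depending only on the spinal open book. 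Hence $\chi(X)$ is bounded above, and since $\chi(X)\ge\chi(\mathcal{B})\,\chi(\Sigma)$ it is bounded below as well.

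For the signature, note that a Stein domain has the homotopy type of a $2$-complex, so $\chi(X)=1-b_1(X)+b_2(X)$. Moreover $\pi_1(X)$ is a quotient of the fundamental group of the $\Sigma$-bundle over $\mathcal{B}$ with the $k$ critical values deleted, so $b_1(X)\le b_1(\Sigma)+b_1(\mathcal{B})+k$, which is bounded now that $k$ is. Therefore $b_2(X)=\chi(X)-1+b_1(X)$ is bounded, and so is $|\sigma(X)|\le b_2(X)$. This confines $\mathcal{C}_{(M,\xi)}$ to a finite set, as required.

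I expect the genuine difficulty to lie not in this combinatorial core---identical to the planar case---but in the geometric and bookkeeping input: checking that the result of~\cite{Lisi_VHMorris_Wendl_Spinal_Open_Books} applies in the stated generality (connected planar pages, a base $\mathcal{B}$ which may be disconnected or of positive genus, the precise normal form of the boundary monodromy) and that the monodromy of the resulting Lefschetz fibration over $\mathcal{B}$ is indeed faithfully encoded by a relation of the above shape in $\mathrm{Mod}(\Sigma)$, with due care when $\mathcal{B}$ has several components or the spinal open book carries pages of more than one topological type. The point of the homological lemma---and the reason the hypothesis of planar pages cannot be dropped---is precisely that no such positivity-detecting homomorphism exists in genus $\ge2$, which is exactly what makes the infinite families of~\cite{Baykur_VHMorris_infinitely_many_euler_characteristic} possible.
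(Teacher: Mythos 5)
Your argument for the Euler characteristic bound is essentially the paper's: both pass through Theorem~\ref{spinal_planar_extension} to present the filling as a Lefschetz fibration over the (fixed) spine surface with the fixed planar page, kill the commutators by abelianizing, and bound the number of vanishing cycles by a homomorphism $\mathrm{Map}(\Sigma)\to\mathbb{Z}$ that is nonnegative on positive twists and strictly positive on twists about essential curves. The paper uses the sum of the multiplicity homomorphisms $M_i$ of Definition~\ref{multiplicity} (each essential curve in a planar page encloses at least one hole), while you build the analogous homomorphism from braid-group exponent sums after capping with once-punctured disks; your positivity claim is true but not free --- a twist about a curve enclosing a single hole has exponent sum zero in all but one of the cappings, and only the capping retaining that hole (where the twist becomes a full twist) makes the sum positive, which requires at least three boundary components, consistent with the special cases you set aside. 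Where you genuinely diverge is the signature. The paper invokes Proposition~\ref{Wand_generalization_to_spinal_planar}, a spinal generalization of Wand's theorem resting on the Endo--Nagami signature of relators~\cite{Endo_Nagami_Signature_of_relators} and the vanishing of the lantern relator's signature, to conclude $\chi+\sigma$ is the same for all fillings, so bounded $\chi$ gives bounded $\sigma$. You instead bound $b_1(X)$ directly from the fibration ($\pi_1$ of the fiber bundle over the punctured base surjects onto $\pi_1(X)$, so $b_1(X)\le b_1(\Sigma)+b_1(\mathcal{B})+k$), use that a Stein domain is a $2$-complex to get $b_2=\chi-1+b_1$, and finish with $|\sigma|\le b_2$. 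This is correct and more elementary --- it avoids Wand/Endo--Nagami entirely --- though it yields only finiteness, whereas the paper's route gives the sharper fact that $\chi+\sigma$ is a contact invariant among these fillings. Your attention to minimality of Stein fillings (so no blow-up is needed and the fibration is allowable, hence all vanishing cycles are essential) is a point the paper leaves implicit, and your worries about disconnected bases or several page types are ruled out by the paper's standing hypothesis that the spine and page are connected.
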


\subsection{Stein fillings of lens spaces}

We now focus on the classification problem for the Stein fillings. The classification problem has been solved for $(S^3,\xi_{std})$, lens spaces $(L(p,q),\xi_{std})$ \cite{Lisca_Classification_of_Stein_fillings_on_lens_spaces}, \cite{McDuff_Rational_ruled_surfaces} and $(L(p,1),\xi_{vot})$~\cite{Plamenevskaya_VHMorris_Planar_open_books}. Here $\xi_{std}$ denotes a universally tight contact structure and $\xi_{vot}$ denotes a virtually overtwisted contact structure. The classification has also been achieved for links of singularities by work of Ohta and Ono~\cite{Ohta_Ono_Classifications_of_Stein_fillings_1, Ohta_Ono_Classifications_of_Stein_fillings_2}. By extending techniques developed by Plamenevskaya and Van Horn-Morris we classify the Stein fillings of lens spaces $L(p(m+1)+1,(m+1))$ with any tight contact structure. 

Recall that a lens space admits a unique universally tight contact structure upto contactomorphism and at most $2$ upto isotopy~\cite{Giroux_lens_space_classification, Honda_Classification_1}. One of our goals is to provide the classification of the Stein fillings of some virtually overtwisted lens spaces. We start out by proving a weaker version of the following theorem of Plamenevskaya and Van Horn-Morris.

\begin{theorem}
Every virtually overtwisted contact structure on $L(p,1)$ has a unique Stein filling up to symplectomorphism.
\end{theorem}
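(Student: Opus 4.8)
The plan is to reproduce the theorem by the strategy described above: reduce the classification of Stein fillings of $(L(p,1),\xi)$ to a combinatorial question about the mapping class group of the page of a supporting planar open book, and then answer that question. Fix a virtually overtwisted $\xi$ on $L(p,1)$ and fix a compatible planar open book $(\Sigma,\phi)$ for it: the page $\Sigma$ is a disk with at least two holes, and $\phi$ is an explicit product of positive Dehn twists about curves parallel to the boundary components of $\Sigma$ together with at most one twist about an interior separating curve, the precise pattern being dictated by $\xi$ via Honda's classification of tight contact structures on $L(p,1)$~\cite{Honda_Classification_1} (equivalently, by the sequence of positive and negative stabilizations of the Legendrian unknot that produces $\xi$). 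By Wendl's theorem, every strong symplectic filling of $(L(p,1),\xi)$ is a symplectic deformation of a blow-up of an allowable Lefschetz fibration over $D^2$ compatible with $(\Sigma,\phi)$; as a Stein filling is minimal, it is itself such a Lefschetz fibration, and its vanishing cycles give a factorization of $\phi$ into positive Dehn twists, while conversely each such factorization yields a Stein filling. So it suffices to show that $\phi$ has a unique positive factorization, up to Hurwitz moves and global conjugation.

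First I would extract the homological constraints. Passing to the abelianization of the mapping class group of $\Sigma$, a free abelian group in which the only relations among Dehn twists come from lanterns, the image of $\phi$ fixes, for each boundary component, the number of boundary-parallel twists around it in any positive factorization, and more generally pins the multiset of twist curves down to lantern substitutions; together with the bound on $\chi$ from Theorem~\ref{Finite_euler_characteristic}, which controls the total number of factors, this reduces a positive factorization of $\phi$, up to Hurwitz equivalence, to a bounded amount of combinatorial data.

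Next I would show that this data can be realized in only one way. The homological information, the positivity of the factors, and the requirement that the product be the given virtually overtwisted monodromy force a lowest-complexity vanishing cycle to be either boundary-parallel or to cut off a subsurface on which $\phi$ restricts to the universally tight annular picture, which has a classical unique positive factorization. Capping off along that curve reduces the problem to a strictly smaller instance of the same type, and one concludes by induction on $p$ after checking the small cases directly. Equivalently, following Plamenevskaya and Van Horn-Morris~\cite{Plamenevskaya_VHMorris_Planar_open_books}, one may instead recognize the $2$-handle cut off at this stage as attached along a standard Legendrian unknot, using the surgery characterization of the unknot~\cite{Kronheimer_Mrowka_Oszvath_Szabo} and the Legendrian simplicity of the unknot~\cite{Eliashberg_Fraser_Unknot_Classification}.

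I expect the last step to be the main obstacle: the abelianization of the mapping class group controls the twist curves only modulo lantern substitutions, so one must genuinely rule out every lantern-type rewriting of $\phi$ into positive factors, and this does not follow formally from the homological bookkeeping — it needs the interaction between that bookkeeping, the positivity of the factors, and the cap-off (or unknot) arguments. Finally, this approach gives uniqueness of the Stein filling only up to symplectic deformation equivalence; promoting it to uniqueness up to symplectomorphism, as in the statement, requires the additional arguments of Plamenevskaya and Van Horn-Morris~\cite{Plamenevskaya_VHMorris_Planar_open_books}.
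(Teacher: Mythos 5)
Your first half follows the paper's route: fix the compatible planar open book coming from Honda's classification, invoke Wendl's theorem to reduce to positive factorizations of the monodromy, and use the capping-off/abelianization bookkeeping to constrain any factorization. But two things go wrong. First, the monodromy is not ``boundary-parallel twists plus at most one interior twist'': for a virtually overtwisted structure on $L(p,1)$, obtained by stabilizing the Legendrian unknot both positively and negatively, the page is $\mathbb{D}_{p-1}$ and the monodromy is $\tau_{\alpha}\tau_{\beta}$ times $p-2$ boundary-parallel twists, where $\alpha$ and $\beta$ are two non-boundary-parallel curves sharing exactly one hole (Figure~\ref{fig:normal_lantern_combinatorics}); it is precisely this two-curve configuration that makes lantern-type refactorizations a genuine possibility. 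Second, and this is the real gap (which you flag yourself), you never rule those refactorizations out. The multiplicity homomorphisms of Section~\ref{combinatorial_arguments} only give Lemma~\ref{combinatorial_lemma}: any positive factorization is $\tau_{\alpha'}\tau_{\beta'}$ times the same boundary twists with $\alpha',\beta'$ enclosing the same holes as $\alpha,\beta$. Your proposed ``cap off a lowest-complexity vanishing cycle and induct on $p$'' does not explain why $\alpha',\beta'$ must be taken to $\alpha,\beta$ by a diffeomorphism of the page; capping off destroys exactly the information that distinguishes candidate factorizations, and no induction step is actually supplied. The paper closes this gap geometrically: since every positive factorization is right-veering, $\alpha'$ and $\beta'$ cannot cross the arcs $a_i$, $c_j$ of Figure~\ref{fig:normal_lantern_combinatorics}, so they lie in a four-holed subsphere, where the characterization of the lantern relation (Lemma~\ref{lantern_normal_inductive_step}, built on the Margalit/Hamidi-Tehrani multitwist criterion and an intersection-number estimate) forces $\alpha'=\tau_{\gamma}^{N}(\alpha)$ and $\beta'=\tau_{\gamma}^{N}(\beta)$; hence any positive factorization is conjugate to the given one (Theorem~\ref{main_combinatorial_theorem}), and uniqueness of the filling follows as in Corollary~\ref{Corollay_McDuff_Classification}.

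Your fallback of invoking the surgery characterization of the unknot~\cite{Kronheimer_Mrowka_Oszvath_Szabo} and Legendrian simplicity~\cite{Eliashberg_Fraser_Unknot_Classification} is exactly the Plamenevskaya--Van Horn-Morris argument that this paper deliberately replaces, so citing it wholesale does not constitute the new proof being asked for. Your closing caveat is also backwards relative to the paper: the statement here is uniqueness up to symplectomorphism, which the paper obtains directly from the uniqueness of the positive factorization up to a diffeomorphism of the page; the deformation statement of~\cite{Plamenevskaya_VHMorris_Planar_open_books} is cited as the stronger refinement, and no additional input from that paper is needed for the symplectomorphism conclusion.
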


\begin{remark}
The original theorem in~\cite{Plamenevskaya_VHMorris_Planar_open_books} proves a stronger version of this result. The authors there prove the above classification up to symplectic deformation.

\end{remark}
Our proof of this result uses Wendl's result and mapping class group techniques. One advantage of this approach is that we can extend this result to a wider class of virtually overtwisted lens spaces.

\begin{theorem}
\label{virtually_overtwisted_lens_spaes}
Let $\xi$ be a contact structure on lens space $L(p(m+1)+1,(m+1))$. If $\xi$ is:
\begin{enumerate}
\item Virtually overtwisted, then $\xi$ has a unique Stein filling upto symplectomorphism.
\item Universally tight and $p \neq 4,5,\dots,(m+4)$, then $\xi$ has a unique Stein filling upto symplectomorphism.
\item Universally tight and $p = 4,5,\dots,(m+4)$, then $\xi$ has at least two Stein fillings upto symplectomorphism.
\end{enumerate}
 
\end{theorem}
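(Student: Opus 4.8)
\emph{Proof strategy for Theorem~\ref{virtually_overtwisted_lens_spaes}.}
The plan is to reduce the classification of Stein fillings --- which, as usual, we consider up to blow-up --- to a combinatorial question about positive factorizations, in the spirit of the earlier sections. Recall that, by the Giroux correspondence together with Wendl's theorem quoted above, every strong, in particular every Stein, filling of a contact manifold carried by a planar open book $(\Sigma,\phi)$ is a symplectic blow-up of an allowable Lefschetz fibration over $D^{2}$ with page $\Sigma$ whose vanishing cycles factorize $\phi$ into positive Dehn twists, and Hurwitz-equivalent factorizations (up to global conjugation) give symplectomorphic, in fact symplectic-deformation-equivalent, fibrations. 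So the theorem will follow from three ingredients: (i) an explicit planar open book $(\Sigma_{\xi},\phi_{\xi})$ for each tight $\xi$ on $L:=L\big(p(m+1)+1,\,(m+1)\big)$; (ii) a \emph{complete} list of the positive factorizations of each $\phi_{\xi}$ up to Hurwitz moves and conjugation; and (iii) the fact that every minimal Stein filling of these lens spaces is honestly one of the resulting Lefschetz fibrations.

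For (i) I would start from the negative continued fraction
\[
\frac{p(m+1)+1}{m+1}=\Big[\,p+1,\ \underbrace{2,\dots,2}_{m}\,\Big],
\]
which realizes $L$ as the boundary of the linear plumbing on one sphere of square $-(p+1)$ followed by $m$ spheres of square $-2$. Honda's classification then gives the tight contact structures on $L$ --- there are $p$ of them up to isotopy --- indexed by admissible sign distributions along the chain, and each is carried by a planar open book with page $\Sigma_{\xi}$ a sphere with a controlled number of boundary components and monodromy $\phi_{\xi}$ an explicit positive word in Dehn twists about boundary-parallel curves and about curves enclosing blocks of consecutive holes, the multiplicities being read off from the continued fraction. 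The universally tight structure is the one whose monodromy concentrates the weight of the $-(p+1)$ vertex into a single high-power twist; the virtually overtwisted ones spread it out and, crucially, each contains a boundary-parallel twist of multiplicity exactly one.

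Step (ii) is the heart of the matter and follows and extends the method of Plamenevskaya and Van Horn-Morris. Passing to the abelianization of $\mathrm{MCG}(\Sigma_{\xi})$ shows that any positive factorization of $\phi_{\xi}$ uses, homology class by homology class, the same twists with the same total exponents, which already rigidifies the boundary-parallel part. For a virtually overtwisted $\phi_{\xi}$ one then argues that the distinguished multiplicity-one boundary twist is central, hence can be conjugated to the front of any positive factorization and cancelled, reducing the count to the same problem on a strictly simpler page --- in effect for a smaller lens space --- and an induction of this type bottoms out at an annulus, where $\tau_{c}^{\,n}$ has a unique positive factorization because the ambient mapping class group is $\mathbb{Z}$. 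This gives part~(1). For the universally tight monodromy the abelianization leaves room for exactly one competitor: the high-power twist can sometimes be partially rewritten by a lantern relation (equivalently a chain relation) supported on the subsurface spanned by the relevant holes, trading four boundary twists for three interior ones. Checking when the four- and three-curve systems of such a relation can be realized by \emph{essential} curves on $\Sigma_{\xi}$ --- equivalently, when the resulting alternative fibration is allowable and still bounded by $L$ --- isolates precisely the range $p\in\{4,5,\dots,m+4\}$: inside it one obtains the two explicit factorizations of part~(3), and outside it the peeling argument again forces uniqueness, giving part~(2).

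For (iii): by Wendl's theorem any Stein filling of $(L,\xi)$ is a blow-up of an allowable Lefschetz fibration compatible with the chosen open book, and minimality of the filling forces that blow-up to be trivial, since an exceptional sphere would violate minimality; so a minimal Stein filling \emph{is} such a fibration. Combining this with the Hurwitz-equivalence statement gives uniqueness up to symplectomorphism in parts~(1) and~(2); in part~(3) the two competing factorizations give minimal Lefschetz fibrations with different second Betti numbers --- the lantern-substituted one strictly smaller, becoming a rational homology ball precisely when $L$ is one of the Lisca lens spaces $L(n^{2},nk-1)$ in the range --- so they are not even diffeomorphic, which proves non-uniqueness. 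The step I expect to be the genuine obstacle is the completeness assertion in step~(ii) for the universally tight case: the abelianization is only a necessary condition, so excluding every further positive factorization --- while simultaneously verifying that the lantern factorization exists exactly when $p\le m+4$ --- demands an honest and somewhat delicate analysis inside the planar mapping class group rather than a soft or purely homological argument.
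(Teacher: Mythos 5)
Your overall framework (Wendl's theorem, an explicit planar open book from Honda's classification, then a complete classification of positive factorizations up to Hurwitz moves and conjugation) is the same as the paper's, but the heart of the matter is exactly your step (ii), and there your argument has a genuine gap. First, for the virtually overtwisted structures the monodromy is not essentially a product of boundary twists: it is $\tau_{\alpha}\tau_{\beta}\tau_{b_1}\cdots\tau_{b_{k-1}}\tau_{b_{k+1}}\cdots\tau_{b_n}^{m+1}$, where $\alpha$ and $\beta$ are essential curves intersecting in two points (coming from the stabilizations of the surgery unknot), and the entire difficulty is the rigidity of this intersecting pair. Your proposed reduction --- conjugate the multiplicity-one boundary twist to the front of ``any positive factorization'' and cancel it, then induct down to an annulus --- does not work: centrality only lets you move that twist inside \emph{your} factorization, while a competing positive factorization need not contain that boundary twist as a factor at all, and multiplying it by $\tau_{b_j}^{-1}$ need not leave a positive word, so there is nothing to induct on. Relatedly, your claim that the abelianization rigidifies the boundary-parallel part is false: the lantern relation trades four boundary-parallel twists for three essential ones. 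The paper replaces these steps by the multiplicity and joint-multiplicity homomorphisms (capping off boundary components) to pin down which boundary components every curve in a positive factorization can enclose (Lemma~\ref{combinatorial_lemma}, Lemma~\ref{main_combinatorial_lemma}), a right-veering argument showing the competing curves are disjoint from a system of arcs so the problem cuts down to $\mathbb{D}_3$, and then the Margalit/Hamidi-Tehrani characterization of the lantern relation together with intersection-number inequalities to show $\alpha',\beta'$ are simultaneously conjugate to $\alpha,\beta$ (Lemma~\ref{lantern_normal_inductive_step}, Theorem~\ref{main_combinatorial_theorem}). None of this appears in your sketch, and the ``completeness assertion'' you flag as the expected obstacle is in fact the theorem's entire content --- for the virtually overtwisted case as well as the universally tight one.

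Two smaller points. In the universally tight case the extra factorization in the range $p\in\{4,\dots,m+4\}$ is not the classical four-for-three lantern but a generalized (daisy-type) substitution: $\tau_{b_1}^{m+1}\tau_{b_2}\cdots\tau_{b_{p-1}}$ is also $\tau_{\alpha_1}\cdots\tau_{\alpha_{p-2}}\tau_{\gamma}\tau_{b_1}^{m+1-(p-3)}$, which exists precisely when $m+1\geq p-3$; Lemma~\ref{main_combinatorial_lemma} proves both the existence and the completeness in one stroke, which is what your sketch lacks. Finally, your proposed way of distinguishing the two fillings in part (3) by second Betti numbers fails at $p=4$: there both factorizations have the same number of Dehn twists (hence the same Euler characteristic), so the aside about rational homology balls cannot be relied on as stated.
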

  
The Legendrian surgery diagrams for these manifolds are given in Figure~\ref{fig:legendrian_surgery_L(2p-1,2)}

\begin{figure}[ht!]


\centering
\begin{overpic}[scale=0.40,tics=10]
{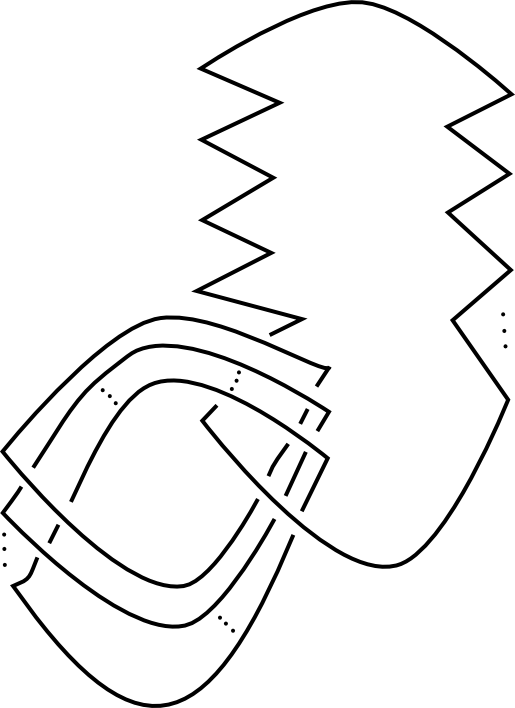}
\put(0, 3){ $(-1)$}
\put(40, 12){$(-1)$}
\put(20, 120){$(-1)$}
\put(-12, 42){$m$}
\put(130, 37){$\mathcal{K}$}
\put(165, 120){$r$}
\put(110, 210){$(-1)$}

\end{overpic}

\caption{Surgery diagram for lens spaces $L(p(m+1)+1, m+1)$. In the diagram $\mathcal{K}$ is the maximal $tb$ unknot stabilized positively $r$ times and negatively $p-r-1$ times. As we vary $r$ from $0$ to $p-1$ we get all the contact structures on these lens spaces.}

\label{fig:legendrian_surgery_L(2p-1,2)}
\end{figure}

\subsection{Limitations of the technique}
We have used Wendl's result to study the geography and the classification problem for the Stein fillings. To use this technique effectively we need tools from mapping class groups. In practice, it is very hard to find all the explicit factorizations of a given word completely. On the other hand, knowing all the  factorizations of a given word need not give any idea about the number of the Stein fillings. This is due to the fact using Wendl's result one can get an upper bound on the number of the Stein fillings. We give an example of a contact structure such that the monodromy has arbitrarily large number of positive factorizations and yet having only finitely many Stein fillings. Let $Map(\mathbf{D_4}, \partial \mathbf{D_4})$  be mapping class group of a sphere with $5$ open disks removed.

\begin{theorem}
\label{curious_example}
There exists infinitely many positive factorizations of a word $\Phi$ written in terms of positive Dehn twists in $Map(\mathbf{D_4}, \partial \mathbf{D_4})$. However, the contact structure associated to the open book decomposition $(\mathbf{D}_4,\Phi)$, has at most finitely many Stein fillings.
\end{theorem}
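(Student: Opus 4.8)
The plan is to exhibit an explicit positive word $\Phi$ on $\mathbf{D}_4=\Sigma_{0,5}$ carrying an infinite family of positive factorizations obtained by iterating the lantern relation, and then to bound the number of its Stein fillings by combining Theorem~\ref{Finite_euler_characteristic} with Wendl's theorem. First I would fix notation for the five boundary circles of $\mathbf{D}_4$ and recall that the lantern relation $t_{\partial_1}t_{\partial_2}t_{\partial_3}t_{\partial_4}=t_xt_yt_z$ holds in every embedded four-holed sphere, with $x,y,z$ its three interior curves. Inside $\mathbf{D}_4$ one can find a chain of four-holed subsurfaces $P$, $P'$ overlapping in a single interior curve, and I would pick $\Phi$ so that it is expressible as a positive word in which one block appears in the ``three interior twists'' form on $P$ and a disjoint block in the ``four boundary twists'' form on $P'$. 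A lantern substitution on $P$, followed by commutations and an inverse lantern substitution on a suitably positioned four-holed subsurface, returns a positive factorization of the very same mapping class $\Phi$, but one whose vanishing cycles are strictly more twisted; iterating this ``lantern shuffle'' yields an infinite sequence $F_0,F_1,F_2,\dots$ of positive factorizations of $\Phi$, all with the same number of factors. To certify that the $F_n$ are genuinely distinct -- and, as I expect, pairwise inequivalent even up to Hurwitz moves and conjugation by elements of $Map(\mathbf{D}_4,\partial\mathbf{D}_4)$ centralizing $\Phi$ -- I would isolate a numerical invariant of the unordered tuple of vanishing cycles that is preserved by all such equivalences yet strictly increases along the sequence, for instance the maximal geometric intersection number of a vanishing cycle with a fixed properly embedded arc.

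For the finiteness of Stein fillings I would argue as follows. Because $\Phi$ is a product of positive Dehn twists, $(\mathbf{D}_4,\Phi)$ supports a Stein fillable, hence tight, contact manifold $(M,\xi)$, and it is planar, so Theorem~\ref{Finite_euler_characteristic} gives an $N$ with $|\chi(X)|,|\sigma(X)|<N$ for every Stein filling $X$. By Wendl's theorem every such $X$ is, up to symplectic deformation, a blow-up of an allowable Lefschetz fibration over $D^2$ with page $\mathbf{D}_4$ compatible with $\Phi$; writing that fibration from a positive factorization into $\mu$ factors and allowing $k$ blow-ups, the relation $\chi(X)=\chi(\mathbf{D}_4)+\mu+k=-3+\mu+k$ forces $\mu,k\le N+3$. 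It then remains to pass from ``positive factorizations of $\Phi$ of bounded length, blown up a bounded number of times'' to ``finitely many symplectomorphism types,'' and for this I would identify $(M,\xi)$ from the construction -- it will be a manifold with classified Stein fillings, such as a lens space or a connected sum of lens spaces with copies of $S^1\times S^2$ -- and invoke the classifications of Lisca and McDuff~\cite{Lisca_Classification_of_Stein_fillings_on_lens_spaces, McDuff_Rational_ruled_surfaces} together with the behaviour of Stein fillings under connected sum. Morally this is forced because a lantern substitution is a rational blowdown of a $(-4)$-sphere, so all the Lefschetz fibrations arising from the $F_n$ are blow-ups of a single minimal filling; but the cleanest route is to name $(M,\xi)$ and cite its classification.

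The main obstacle is the distinctness of the infinite family $\{F_n\}$: the lantern shuffle must be arranged so that it truly escapes every finite orbit, and I must verify that the chosen invariant is simultaneously unbounded along the sequence and invariant under Hurwitz moves and centralizing conjugation -- this is where I expect the argument to need the most care. A secondary point is to pin down the underlying contact manifold precisely enough that an independent classification of its Stein fillings is actually available, so that the second half of the proof has something concrete to rest on.
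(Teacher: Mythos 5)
The finiteness half of your argument has a genuine gap. Bounding $|\chi(X)|$ and $|\sigma(X)|$ via Theorem~\ref{Finite_euler_characteristic}, and hence bounding the number of vanishing cycles $\mu$ and blow-ups $k$ through Wendl's theorem, does not bound the number of Stein fillings: the whole point of the example is that there are infinitely many positive factorizations of the \emph{same bounded length}, each a priori producing its own allowable Lefschetz fibration, so finiteness of the numerical invariants says nothing about finiteness of symplectomorphism types. You acknowledge this and propose to close the gap by identifying $(M,\xi)$ as a lens space or a connected sum with already classified fillings, but you never exhibit a concrete word $\Phi$, so there is nothing to identify; and for a word admitting a genuinely infinite family of inequivalent factorizations there is no reason the underlying manifold should fall into the lists classified by Lisca or McDuff. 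The paper's actual route is different: it takes $\Phi=\tau_a\tau_b\tau_c\tau_{b_1}\tau_{b_3}\tau_{b_4}$ explicitly, observes that deleting $\tau_c$ gives an open book for the virtually overtwisted $L(4,1)$, so $(M,\xi)$ is Legendrian surgery on a Legendrian knot $L$ (the curve $c$ sitting on the page), and then deduces finiteness of Stein fillings from the uniqueness of the filling of virtually overtwisted $L(4,1)$ together with the fact that a knot type has only finitely many Legendrian representatives up to contactomorphism in a tight manifold. Some argument of this kind, tying the fillings to a surgery description over a manifold whose fillings are controlled, is what your outline is missing; the rational-blowdown heuristic is not a substitute for it.

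On the first half: your ``lantern shuffle'' is morally the mechanism the paper uses, but the paper implements it concretely. From the lantern-type identity $\tau_a\tau_b=\tau_{b_1}\tau_{b_2}\tau_{b_3}\tau_\gamma\tau_d^{-1}$, conjugation by $\tau_d^{n}$ fixes the right-hand side, so $\tau_{\tau_d^n(a)}\tau_{\tau_d^n(b)}\tau_c\tau_{b_1}\tau_{b_3}\tau_{b_4}$ is a positive factorization of $\Phi$ for every $n$, and distinctness, even up to conjugation by a global diffeomorphism, is verified by an explicit argument with the curve $c$ and an arc in its complement. Your proposed invariant (maximal geometric intersection with a fixed arc, claimed invariant under Hurwitz moves and centralizing conjugation) is plausible but unverified, and without a specific $\Phi$ the construction of the family and the distinctness check both remain sketches. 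As written, the proposal assembles the right ingredients for the infinite family but does not yet contain a proof of either assertion of the theorem.
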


Rest of this paper is organized as follows. In Section~\ref{background} we review basics of contact geometry and mapping class groups as required in this paper. In Section~\ref{lantern_relations} we prove technical lemmas which characterize lantern relation in a planar surface with $4$ boundary components. As the reader will see, these characterizations help us in proving all the classification results stated in above. In Section~\ref{combinatorial_arguments} we finish the proof of main results about the classification of the Stein fillings of lens spaces by using  combinatorial arguments. In Section~\ref{euler_characteristic_signature} we give the proofs of Theorem~\ref{Finite_euler_characteristic} and Theorem~\ref{spinal_finiteness_of_euler_characteristic}. In Section~\ref{Section:Sphere_Plumbings}, we prove Theorem~\ref{Thm:Sphere_Plumbing}. In the final section we give the example stated in Theorem~\ref{curious_example}.

\textbf{Acknowledgements:} The author would like to thank his advisor John Etnyre for all the support and guidance over years and innumerable discussions regarding this project. The author would also like to thank Dan Margalit for useful discussions. In particular, for pointing to his paper~\cite{Margalit_Lantern_lemma} and for the suggestion that results there could be used to prove the uniqueness of the Stein filligs. Thank you to Tom Mark and Laura Starkston for asking question whose answer led to Theorem~\ref{Thm:Sphere_Plumbing}.  Finally author would like to thank Jeremy Van Horn-Morris for useful discussions regarding this project. The author was partially supported by the NSF grant DMS-0804820.

\section{Background}
\label{background}
In this section we review some basic notions in contact geometry and mapping class groups. For more on the contact geometry part see~\cite{Etnyre_Introductory_Lectures, Geiges_Introduction_to_contact_geometry}. For the basics of mapping class groups see~\cite{Farb_Margalit_Primer}. In this paper we will denote a surface of genus $g$ with $r$ boundary components  by $\Sigma_{g,r}$. We will denote a closed surface of genus $g$ by $\Sigma_g$. We will use $\mathbf{D}_n$ to denote a sphere with $n+1$ open disks removed. We number the boundary components as $b_1,b_2,\dots,b_{n+1}$. By fixing an outer boundary component, denoted by $b_{n+1}$, we can embed the sphere in $\mathbb{R}^2$. A mapping class group of a surface $\Sigma_{g,r}$ is a group of diffeomorphism upto isotopy such that the diffeomorphism fixes the boundary pointwise. We will denote mapping class group of a closed surface $\Sigma_{g}$ by $Map(\Sigma_{g})$ and that of a surface $\Sigma_{g,r}$ with $r$ boundary components by $Map(\Sigma_{g,r},\partial \Sigma_{g,r})$. 

\subsection{Open books and Legendrian surgery diagrams}
\label{background_surgery_open_book}
Recall that $(S^3, \xi_{std})$ is the unique Stein fillable contact structure on $S^3$. It is well known that any manifold can be obtained as $(\pm)1$-contact surgery along a Legendrian link in $(S^3,\xi_{std})$~\cite{Ding_Geiges_Legendrian_Surgery_Presentation_Contact_Structures}. There is a natural way of constructing an open book decompositions of manifolds using surgery descriptions. We briefly explain the procedure. We will denote contact manifold by $(M,\xi)$ and open book supporting it by $(\Sigma,\Phi)$ in this section.

For any Legendrian knot, $\mathcal{L} \subset (M,\xi)$, there is an open book decomposition supporting $\xi$ such that $\mathcal{L}$ sits on the page of this open book decomposition and the framing given by the page and $\xi$ agree, see~\cite{Etnyre_Lectures_on_open_boon_decompositions}. There is a natural way of defining an open book decomposition for a manifold $(M',\xi')$ obtained by performing $(\pm)1$-contact surgery on a Legendrian knot in $(M,\xi)$. This is provided by the following theorem, see~\cite{Etnyre_Lectures_on_open_boon_decompositions}.

\begin{theorem}
Suppose $(M,\xi)$ is a contact manifold supported by $(\Sigma, \Phi)$. Let $L$ be a Legendrian knot contained in the page $\Sigma$. Then the contact manifold obtained by performing a $(\pm)1$-contact surgery is supported by an open book decomposition given by $(\Sigma,\Phi \circ \tau_L^{\mp})$. 
\end{theorem}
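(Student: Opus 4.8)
The plan is to split the statement into a topological part --- the underlying $3$--manifold of the surgery is correctly described --- and a contact-geometric part --- the supported contact structure is the surgered one --- and to reduce both to a local model in a solid-torus neighbourhood of $L$.

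First I would realize $(M,\xi)$ abstractly as the mapping torus $M_\Phi=\big(\Sigma\times[0,1]\big)/\!\sim$, where $(x,1)\sim(\Phi(x),0)$ and $(x,t)\sim(x,t')$ for $x\in\partial\Sigma$, with binding $B=\partial\Sigma$. Since $L$ lies on a page I may take that page to be the seam page $P=\Sigma\times\{0\}$, and a bicollar $\Sigma\times(-\epsilon,\epsilon)$ of $P$ then contains the solid torus $\nu(L)=A\times(-\epsilon,\epsilon)$, where $A\subset\Sigma$ is an annular neighbourhood of $L$ in which the Dehn twist $\tau_L$ is supported. In $\partial\nu(L)=T^2$ the meridian $\mu$ bounds a disk in $\nu(L)$, and the longitude $\lambda$ traced by $\partial A$ on the page is the \emph{page framing} of $L$; by the quoted fact that for a Legendrian knot lying on a page the page framing agrees with the contact framing, $\lambda$ is also the contact framing of $L$.

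Next I would establish the topological claim: $M_{\Phi\circ\tau_L^{\mp}}$ is obtained from $M_\Phi$ by $(\pm1)$--surgery on $L$ relative to $\lambda$. Cutting $M_\Phi$ along $P$ gives $\Sigma\times[0,1]$, and regluing its two ends by $\Phi$, resp.\ by $\Phi\circ\tau_L^{\mp}$, recovers $M_\Phi$, resp.\ produces $M_{\Phi\circ\tau_L^{\mp}}$. Because $\tau_L^{\mp}$ is supported in $A$, these two regluings agree outside $\nu(L)$, so $M_{\Phi\circ\tau_L^{\mp}}$ is $M_\Phi$ with $\nu(L)$ removed and a solid torus glued back in by cutting $\nu(L)$ along the annulus $A\times\{0\}$ and regluing by $\tau_L^{\mp}$. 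Tracking a meridian disk of $\nu(L)$ through this cut-and-reglue --- its two halves get reattached along an arc of $A$ that winds once around $A$ in the $\mp$ direction --- one sees the new meridian is $\mu\pm\lambda$, i.e.\ the regluing is $(\pm1)$--surgery along $L$ with respect to $\lambda$. As $\lambda$ is the contact framing, this is exactly the underlying $3$--manifold of contact $(\pm1)$--surgery on $L$ (this reproves the familiar dictionary that a positive Dehn twist in the monodromy is a $(-1)$--surgery relative to the page framing).

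Finally I would treat the contact claim. For the $(-1)$ case I would invoke the Loi--Piergallini / Akbulut--Ozbagci correspondence quoted above: attaching a Weinstein $2$--handle along $L$ with framing $\mathrm{tb}(L)-1$ corresponds, on the level of open books, to replacing the monodromy $\Phi$ by $\Phi\circ\tau_L$, and the outgoing end of this Stein cobordism carries the contact structure supported by $(\Sigma,\Phi\circ\tau_L)$; since Legendrian surgery \emph{is} contact $(-1)$--surgery, this gives the $(-1)$ statement. The $(+1)$ case I would deduce from the fact that contact $(+1)$--surgery is inverse to contact $(-1)$--surgery: if $(M',\xi')$ is the result of contact $(+1)$--surgery on $L$, then contact $(-1)$--surgery on the dual Legendrian knot $L'\subset(M',\xi')$ recovers $(M,\xi)$, and $L'$ can be placed on a page of $(\Sigma,\Phi\circ\tau_L^{-1})$ so that adding $\tau_{L'}$ to its monodromy cancels the $\tau_L^{-1}$ back to $\Phi$; alternatively one patches a Giroux form on the surgered manifold directly, using that the surgery is localized in $\nu(L)$, disjoint from $B$, while $(\Sigma,\Phi\circ\tau_L^{-1})$ is still an honest open book with page $\Sigma$ and binding $B$. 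The hard part will be this contact-geometric bookkeeping --- verifying that the open book modification produces the surgered \emph{contact} structure and not merely the surgered $3$--manifold, and keeping the framing and orientation conventions consistent so that contact $(\pm1)$ pairs with $\tau_L^{\mp}$ rather than $\tau_L^{\pm}$ --- for which the cleanest route is the Weinstein-handle picture in the $(-1)$ case together with the inverse-surgery characterization of the $(+1)$ case.
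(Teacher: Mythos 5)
This statement is not proved in the paper at all: it is quoted as background in Section~\ref{background_surgery_open_book} and attributed to Etnyre's lecture notes \cite{Etnyre_Lectures_on_open_boon_decompositions}, so there is no in-paper proof to compare yours against. Your sketch follows the standard route from that reference (and from Ding--Geiges): cut along the page containing $L$, observe the regluing by $\tau_L^{\mp}$ is localized in $A\times(-\epsilon,\epsilon)$, track the meridian to get the surgery dictionary relative to the page framing, and then identify the page framing with the contact framing so that this is contact $(\pm1)$-surgery. Two points deserve care. First, the equality of page framing and contact framing is not automatic for an arbitrary supporting open book with $L$ on a page; it is part of how the open book is chosen (this is exactly the fact the paper quotes in the preceding paragraph), so it should be stated as a standing hypothesis or re-derived from the Legendrian realization on the page, not invoked as if it held for free.

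Second, your $(+1)$ case as phrased is circular: you place the dual knot $L'$ on a page of $(\Sigma,\Phi\circ\tau_L^{-1})$, but $(M',\xi')$ has not yet been identified with the contact manifold supported by that open book --- that identification is the statement being proved. The fix is to run the argument in the other direction: let $(N,\eta)$ be the contact manifold supported by $(\Sigma,\Phi\circ\tau_L^{-1})$, realize $L$ Legendrian on its page with page framing equal to contact framing, and apply your already-proved $(-1)$ case to see that contact $(-1)$-surgery on $L\subset(N,\eta)$ yields $(\Sigma,\Phi)$, i.e.\ $(M,\xi)$; then identify the dual knot of this surgery in $(M,\xi)$ with $L$ itself and invoke the Ding--Geiges cancellation of a $(+1)$/$(-1)$ pair to conclude $(M',\xi')\cong(N,\eta)$. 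With that reordering, and with the Weinstein-handle argument (or a direct Giroux-form patching) for the $(-1)$ case, your outline is a correct and essentially standard proof of the quoted theorem.
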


Given a Legendrian knot $L$, there is a natural operation called positive/negative stabilization of $L$ that can be used to get another Legendrian knot in the same knot type. For a Legendrian knot in $\mathbb{R}^3$ with its standard contact structure, positive(negative) stabilization is achieved by adding a zigzag to the front projection of the Legendrian knot such that rotation number of the Legendrian knot increases(decreases)  by $1$. Stabilization is a well-defined operation, that is, it does not depend on the point at which zigzags are added.  One can define this operation on in terms of open books as follows. See~\cite{Etnyre_Lectures_on_open_boon_decompositions} for details and proof of the lemma.

\begin{lemma}
Let $(\Sigma, \Phi)$ be an open book decomposition supporting the contact structure $\xi$ on $M$. Suppose $L$ is a Legendrian knot in $M$ that lies in the page $\Sigma$. If we stabilize $(\Sigma,\Phi)$ as shown in Figure~\ref{Knot_stabilization}, then we may isotop the page of the open book so that positive(negative) stabilization appear on the page $\Sigma$ as shown in Figure~\ref{Knot_stabilization}.
\end{lemma}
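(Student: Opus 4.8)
The plan is to reduce the statement to a purely local computation in a standard neighborhood of $L$, using the fact that both operations involved are supported in an arbitrarily small neighborhood of a properly embedded arc, which we will take to run along (or just off to one side of) $L$ on the page. Recall first that the relevant stabilization of $(\Sigma,\Phi)$ is the \emph{positive} Giroux stabilization: $\Sigma$ is replaced by $\Sigma'=\Sigma\cup h$, where $h$ is a $1$-handle attached to $\Sigma$, and the monodromy becomes $\Phi'=\Phi\circ\tau_c$, where $c$ is the simple closed curve running over $h$ exactly once; by Giroux this does not change the supported contact structure $(M,\xi)$. Both cases of the lemma will be positive stabilizations in this sense, the two signs of the Legendrian stabilization arising only from how the $1$-handle $h$ is attached relative to $L$ (the feet straddling $L$ on one side versus the other), exactly as in Figure~\ref{Knot_stabilization}.

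Next I would set up the two standard local pictures. By the Legendrian neighborhood theorem a tubular neighborhood of $L$ in $(M,\xi)$ is contactomorphic to a fixed model in which $L$ is the core curve; since $L$ lies on a page, one can further isotope the open book (this is the ``isotop the page'' step) so that, inside this neighborhood, the pages are the standard strips, $L$ sits on one of them, and the page framing of $L$ agrees with its contact framing. On the open book side I would realize the positive stabilization inside this neighborhood, taking the arc used to form $c$ to be a subarc $\gamma$ of $L$ (respectively a short arc pushed to the chosen side of $L$ on the page), so that $c=\gamma\cup(\text{core of }h)$ and $h$ is a positive Hopf band plumbed on along $\gamma$.

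The main step is then to consider the knot $L'$ obtained from $L$ by rerouting it over the band $h$ in place of $\gamma$. Since $\gamma$ and the core of $h$ cobound a disk in $M$, $L'$ is smoothly isotopic to $L$. Comparing framings, along $L\setminus\gamma$ the $\Sigma'$-page framing of $L'$ agrees with the old page framing of $L$, while sliding $L'$ off the band $h$ contributes exactly one negative twist (this is precisely where positivity of the Hopf band is used); hence the $\Sigma'$-page framing of $L'$ equals the old page framing of $L$ minus one, which matches the contact framing of $L'$ (equal to $\mathrm{tb}(L)-1$), so $L'$ is Legendrian-realizable on $\Sigma'$ with $\mathrm{tb}(L')=\mathrm{tb}(L)-1$. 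Tracking the contact planes along the reroute shows the rotation number changes by $+1$ or $-1$ according to which side of $L$ the band was attached. Finally, inspecting the front projection of $L'$ in the Darboux chart, one sees literally one up--down (resp. down--up) zigzag inserted into the front of $L$, so $L'$ is the positive (resp. negative) Legendrian stabilization of $L$; this is the curve drawn on the stabilized page in Figure~\ref{Knot_stabilization}.

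The delicate point, and the one I expect to require the most care, is the sign bookkeeping: matching the positive Hopf band to the $-1$ shift in page framing and matching the two placements of the band to the two signs of the rotation number. This forces one to fix orientation conventions for the open book, the Hopf band, and the front projection, and then to verify a small number of local pictures; it is essentially the content of Figure~\ref{Knot_stabilization}. Everything else — the locality and $C^0$-closeness of the stabilized open book, the fact that the rerouting isotopy can be taken through Legendrian knots (since $h$ is attached compatibly with $\xi$ in the Hopf band model), and the invariance of $(M,\xi)$ — follows from the standard neighborhood theorems together with Giroux's stabilization theorem.
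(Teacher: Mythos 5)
Your proposal is correct and follows essentially the same route as the proof the paper points to: the paper states this lemma without proof, citing Etnyre's lecture notes, where exactly this local-model argument appears --- stabilize the open book by plumbing a positive Hopf band next to $L$, reroute $L$ over the new handle, and check that the page framing drops by one while the rotation number changes by $\pm 1$ according to the side of attachment, identifying the result as $S_{\pm}(L)$ in the local front. The only slip is that the plumbing arc should be properly embedded (endpoints on $\partial\Sigma$, running alongside $L$ near the boundary) rather than a subarc of $L$ itself, but this does not affect the framing or sign bookkeeping.
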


\begin{figure}[ht]
\begin{center}
\begin{overpic}
{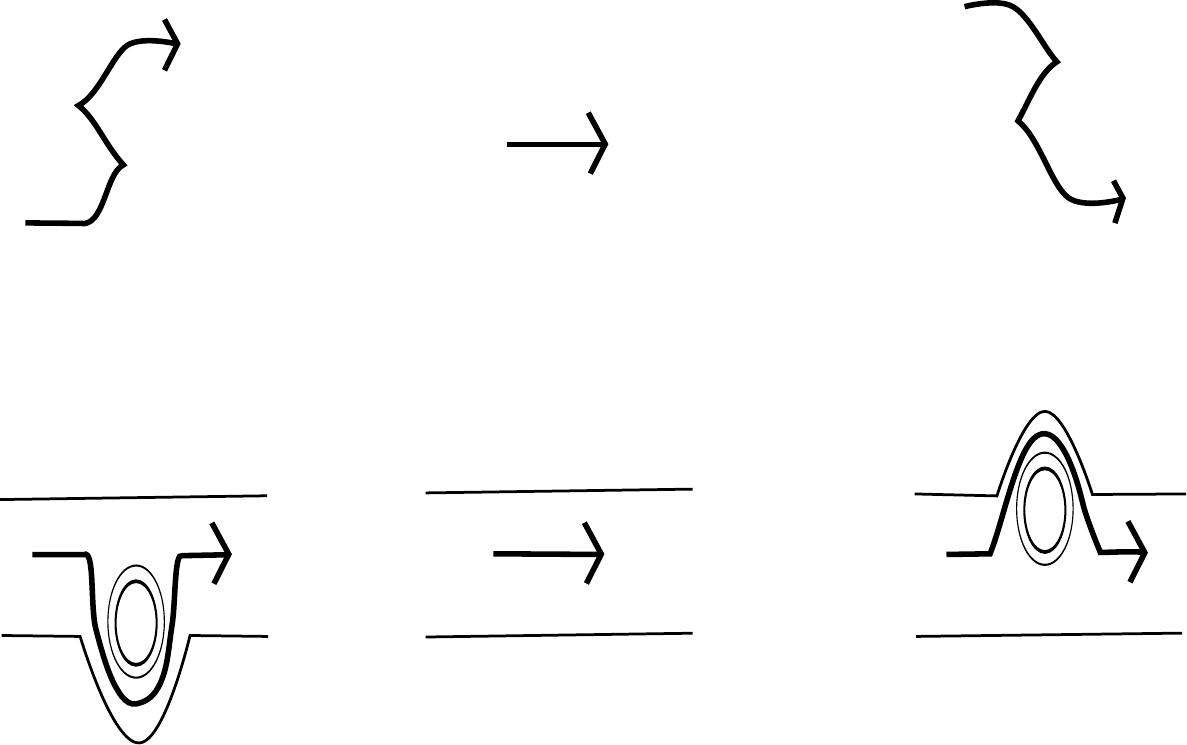}

\end{overpic}	

\caption{Stabilizing the knot on the page of an open book. Middle figure shows a Legendrian knot on a page of open book. Left figure shows how to stabilize the knot negatively while the right figure shows how to stabilize the knot positively.}
\label{Knot_stabilization}
\end{center}
\end{figure}

\subsection{Lefschetz fibrations.}
In this section, we recall basic notions of Lefschetz fibrations. Let $X$ and $B$ be compact oriented smooth manifolds of dimension $4$ and $2$, respectively. A Lefschetz fibration, denoted $f : X \rightarrow B$, is a smooth locally trivial fibration except at a finite set of critical values $\lbrace b_i \rbrace \in B$. If the manifolds have boundary, we can assume that critical values $\lbrace b_i \rbrace \subset int(B)$. In addition, each singular fiber $f^{-1}(b_i)$ has a unique critical point and in a neighbourhood of this point $f$ can be modelled as $f(z_1,z_2) = z_1^2 + z_2^2$. The genus of a regular fiber of $f$ is called the genus of Lefschetz fibration. Singular fibers represent nodal singularities and each singular fiber is obtained by collapsing a simple closed curve (called the vanishing cycle) in a smooth fiber. The monodromy of fibration around the singular fiber is given by positive Dehn twist about the vanishing cycle. The topology of Lefschetz fibration is completely specified by the monodromy representation $\Psi : \pi_1 (B - \{b_i\}) \rightarrow Map(\Sigma)$. If $B = D^2$, then the monodromy along $\partial D^2$ is given by product of positive Dehn twists corresponding to the singular fibers. In this case it is called the \textit{total monodromy} of the fibration. Topology of the Lefschetz fibration can also be described by handle body presentation (see~\cite{Gompf_Stipsicz}) which can be specified by the monodromy representation by attaching $2$-handles to $\Sigma \times D^2$ along the vanishing cycles with framing $-1$ relative to the framing inherited from the fiber. It is a classical result that the monodromy representation, $\Psi$, is uniquely determined upto conjugation by a global diffeomorphism (which is an element of $Map(\Sigma)$) and Hurwitz move which we describe now. Let $\Phi = \tau_1 \tau_2 \dots \tau_k$ be the total monodromy of Lefschetz fibration. Hurwitz move exchanges any two consecutive factors by $\tau_i \tau_{(i+1)} \rightarrow (\tau_{(i+1)})_{\tau_i^{-1}} \tau_i$ or $\tau_i \tau_{(i+1)} \rightarrow \tau_{(i+1)} (\tau_i)_{\tau_{(i+1)}}$, where we use notation $(f)_{\phi} = \phi^{-1} f \phi$.

In the case of Lefschetz fibration over $D^2$ with regular fiber given by $\Sigma_{g,b} , b \neq 0$, the boundary has a natural open book decomposition given by ($\Sigma, \phi$), with $\phi $ being a positive factorization of the total monodromy of the Lefschetz fibration, see~\cite{Akbulut_Ozbagci_Stein_Surface_Lefschetz_fibrations, Loi_Piergallini_Lefschetz_fibrations, Plamenevskaya_Erratum} for more details. The contact structure on the boundary of this Stein structure is isotopic to the one coming from Lefschetz fibration structure. 

\subsection{Spinal Open Books}
\label{spinal_open_books_basics}
A spinal open book is a generalization of the standard open book decomposition where the binding is allowed to be $\coprod_1^n (S^1 \times  \Sigma_i)$ where $\Sigma_i$ can be any surface with boundary. In a standard open book decomposition the binding is $\coprod_1^m (S^1 \times  D^2)$. Note that in a spinal open book the fibers can be non-diffeomorphic surfaces and binding can also be made of non-diffeomorphic surfaces. An abstract spinal open book is given by a $5$-tuple $(M,\hat{F},\hat{\phi},\hat{\Sigma},G)$. Here is $M$ is the $3$-manifold, $\hat{F}$ are the fibers(can be disjoint), $\hat{\phi}$ is orientation preserving diffeomorphism of $\hat{F}$ fixing boundary pointwise, $\hat{\Sigma}$ is the binding, $G$ is a bijection:$|\partial\hat{F}| \cong | \partial \hat{\Sigma}|$. If the contact manifold is fixed we will just denote the supporting spinal open book decomposition by $(\hat{F},\hat{\phi},\hat{\Sigma},G)$. Roughly speaking, spinal open books provide the right contact boundary for Lefschetz fibrations over non disk bases. It is also known that spinal open books under additional restrictions support a unique contact structure. We refer the reader to articles~\cite{Lisi_VHMorris_Wendl_Spinal_Open_Books,Baykur_VHMorris_infinitely_many_euler_characteristic} for details on spinal open books. For proving Theorem~\ref{spinal_finiteness_of_euler_characteristic} we need a version of Wendl's theorem. If the fibers $\hat{F}$ has a planar component, then Wendl's theorem can be generalised.

\begin{theorem}[Lisi-Van Horn-Morris-Wendl, \cite{Lisi_VHMorris_Wendl_Spinal_Open_Books}]
\label{spinal_planar_extension}
If spinal open book $(\hat{F},\hat{\phi},\hat{\Sigma},G)$ has a planar component to $\hat{F}$, then any symplectic filling of the contact manifold $(M,\xi)$ supported by $(\hat{F},\hat{\phi},\hat{\Sigma},G)$ admits a Lefschetz fibration whose boundary is $(\hat{F},\hat{\phi},\hat{\Sigma},G)$.
\end{theorem}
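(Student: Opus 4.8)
The plan is to adapt Wendl's pseudoholomorphic curve argument for planar open books to the spinal setting, following~\cite{Lisi_VHMorris_Wendl_Spinal_Open_Books}. First I would take a symplectic filling $(W,\omega)$ of $(M,\xi)$ — which may be assumed strong after the standard deformation near a planar-type boundary — pass to a minimal model by blowing down all symplectically embedded spheres of self-intersection $-1$, and complete $W$ to $\widehat W$ by gluing a cylindrical end $[0,\infty)\times M$ modeled on the symplectization of a contact form $\alpha$ adapted to the spinal open book $(\hat F,\hat\phi,\hat\Sigma,G)$. Next I would choose an almost complex structure $J$ on $\widehat W$ which is $\omega$-tame on the compact part, cylindrical and compatible with $\alpha$ on the end, and arranged near $M$ so that the pages of the spinal open book (in particular the planar component of $\hat F$) are $J$-holomorphic and so that the spine pieces $S^1\times\Sigma_i$ are foliated by $J$-holomorphic curves positively asymptotic to the closed Reeb orbits covering the components of the binding $\hat\Sigma$.

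The heart of the matter is the moduli space $\mathcal{M}$ of finite-energy $J$-holomorphic curves in $\widehat W$ which are homologous to the planar pages and positively asymptotic to the binding orbits with the multiplicities dictated by $G$. I would establish three properties. (i) \emph{Automatic transversality}: because these curves have genus zero and their asymptotic orbits may be taken nondegenerate and elliptic (or treated in a Morse--Bott framework), Wendl's automatic transversality criterion — the inequality relating the normal Chern number, the Euler characteristic of the punctured domain, and the parities of the Conley--Zehnder indices — holds, so $\mathcal{M}$ is a smooth manifold of the expected dimension for every admissible $J$. (ii) \emph{Intersection theory}: using Siefring's intersection pairing for punctured holomorphic curves, any two distinct curves of $\mathcal{M}$ are either disjoint or meet transversally in exactly one point, and every curve of $\mathcal{M}$ is embedded; this positivity is what produces the fibration structure and its nodal fibers. (iii) \emph{Compactness}: by SFT compactness, a sequence in $\mathcal{M}$ can degenerate only to a nodal curve or to a holomorphic building with nontrivial symplectization levels; minimality of $W$ excludes closed sphere components, the asymptotics together with the near-boundary foliation force any symplectization level to consist of trivial cylinders, and the surviving degenerations are precisely the vanishing-cycle nodes.

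Combining (i)--(iii), $\mathcal{M}$ together with the near-boundary pages sweeps out all of $\widehat W$ and, outside finitely many transverse double points, foliates it by holomorphic leaves; projecting along the leaves exhibits $W$ as (the interior of) a Lefschetz fibration over the base surface of the spinal open book, restricting on $\partial W=M$ to the given spinal open book $(\hat F,\hat\phi,\hat\Sigma,G)$, the nodal leaves being the singular fibers. I expect the main obstacle to be the compactness and degeneration analysis of step (iii): in a spinal open book the binding $\hat\Sigma$ is a union of surfaces rather than solid tori, so the Reeb dynamics near the binding and the catalogue of possible limit buildings are considerably more intricate than in Wendl's original situation, and one must verify that the planar leaves cannot limit onto configurations which fail to reassemble into a Lefschetz fibration. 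A secondary difficulty is the bookkeeping of blow-ups — tracking which leaves become exceptional spheres and checking that, after blowing them down, the minimal filling is genuinely a Lefschetz fibration rather than only a blow-up of one — which again rests on the intersection estimates of step (ii).
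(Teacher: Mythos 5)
The paper offers no proof of this statement at all: it is imported verbatim, with attribution, from Lisi--Van Horn-Morris--Wendl \cite{Lisi_VHMorris_Wendl_Spinal_Open_Books} and is used purely as a black box (it feeds into Theorem~\ref{spinal_finiteness_of_euler_characteristic} via Proposition~\ref{Wand_generalization_to_spinal_planar}). So there is no internal argument to compare yours against; the honest comparison is with the cited paper, and at the level of strategy your outline does track it: complete the filling, choose an adapted almost complex structure for which the planar pages are holomorphic, and propagate them via automatic transversality, Siefring intersection positivity, and SFT compactness into a singular foliation that exhibits the filling as a Lefschetz fibration inducing the given spinal open book on the boundary.

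As a proof, though, the outline stops exactly where the spinal-specific work begins, and in one place it says something that cannot be taken at face value. The punctured page-like curves are not asymptotic to isolated nondegenerate elliptic binding orbits: the spine pieces $S^1\times\Sigma_i$ are foliated by Morse--Bott families of Reeb orbits $S^1\times\{z\}$, $z\in\Sigma_i$, and perturbing to nondegeneracy would destroy the boundary foliation that seeds the moduli space, so transversality, the intersection theory, and the compactness catalogue must all be run in the Morse--Bott framework -- this is the bulk of the technical content of \cite{Lisi_VHMorris_Wendl_Spinal_Open_Books}, not the parenthetical alternative your step (i) makes it. Relatedly, the base of the resulting fibration is not a disk: it is assembled from the leaf space together with the spine factors $\Sigma_i$ via the asymptotic evaluation maps, and verifying that this leaf space is a compact surface with the correct boundary behavior (so that $\partial W$ inherits precisely $(\hat F,\hat\phi,\hat\Sigma,G)$) is a genuine step that your phrase ``projecting along the leaves'' compresses away. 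Finally, what the argument actually delivers is the filling up to symplectic deformation and blow-ups -- i.e.\ a blow-up of a Lefschetz fibration, as in the planar Wendl theorem quoted earlier in the paper -- so your initial passage to a minimal model must be accompanied by the bookkeeping of exceptional spheres that you defer to the end; you acknowledge this, but it, like step (iii), is acknowledged rather than carried out. In short: right strategy, faithful to the cited source, but as written it is a reading plan for that paper rather than a proof.
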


For the purposes of this paper, we will assume that the $\hat{F},\hat{\Sigma}$ are both connected.

\section{Characterization of lantern type relations.}
\label{lantern_relations}
The aim of this section is to give a characterization of the lantern relation. Along with the combinatorial arguments in Section~\ref{combinatorial_arguments} this gives us the ingredients required for the proofs of our theorems on the classification of symplectic fillings of lens spaces and the geography.

We will denote the geometric intersection number of curves and arcs by $i$. For this to be well-defined we assume all curves are isotoped to have minimal intersections.

Recall the classical lantern relation which states states that $\tau_{b_1} \tau_{b_2} \tau_{b_3} \tau_{b_4} = \tau_{\alpha} \tau_{\beta} \tau_{\gamma}$. Here $\alpha, \beta, \gamma$ are curves as shown in Figure~\ref{fig:lantern_normal} and $b_1,\dots,b_4$ denote the curves isotopic to the boundary component as shown.

\begin{figure}[htb]
\begin{center}

\begin{overpic}[tics=10]
{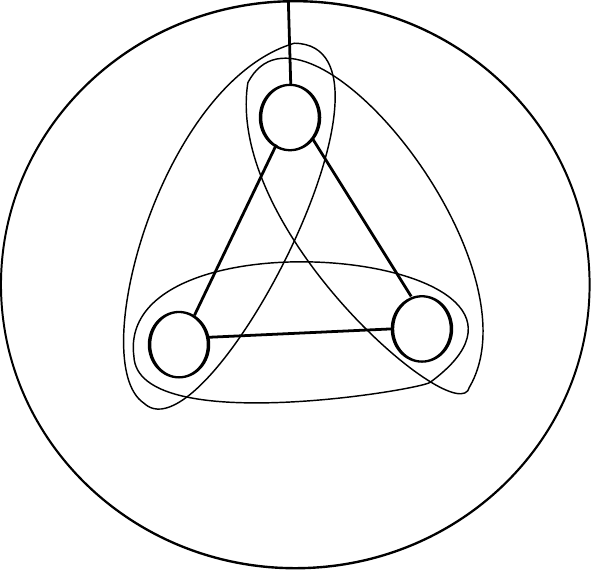}
\put(38, 110){$\alpha$}
\put(130, 110){$\beta$}
\put(85, 40){$\gamma$}
\put(47, 63){$b_2$}
\put(80, 125){$b_1$}
\put(117, 66){$b_3$}
\put(15, 145){$b_4$}
\put(60, 100){$a$}
\put(110, 100){$b$}
\put(90, 60){$c$}
\put(75, 155){$x$}

\end{overpic}
\caption{Classical Lantern relation.}
\label{fig:lantern_normal}
\end{center}
\end{figure}

\begin{lemma}
\label{lantern_normal_inductive_step}
Let $\alpha',\beta' \in \mathbb{D}_3$ be curves that enclose the same set of boundary components as $\alpha, \beta$ respectively and satisfy $\tau_{\alpha} \tau_{\beta} = \tau_{\alpha'} \tau_{\beta'}$. Then there is  $N \in \mathbb{Z}$ such that $\alpha' =  \tau_{\gamma}^N(\alpha)$ and $\beta' = \tau_{\gamma}^N(\beta)$.
\end{lemma}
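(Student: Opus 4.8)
The plan is to work in the surface $\mathbf{D}_3$ (a sphere with four boundary components, so a four-holed sphere) and to exploit the fact that the complement of $\alpha \cup \beta$ carves out a distinguished subsurface whose only essential curve (up to the relevant constraints) is $\gamma$. First I would observe that the hypothesis $\tau_\alpha \tau_\beta = \tau_{\alpha'}\tau_{\beta'}$, read in the abelianization $H_1(Map(\mathbf{D}_3,\partial\mathbf{D}_3))$, forces $[\alpha']=[\alpha]$ and $[\beta']=[\beta]$ as homology classes of curves (together with the given constraint that $\alpha'$ and $\beta'$ enclose the same boundary components as $\alpha$ and $\beta$). In particular $\alpha',\beta'$ are each non-separating-into-a-pair-of-pants curves of the same topological type as $\alpha,\beta$: each of $\alpha,\beta$ cuts off two of the four boundary components.

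Next I would turn the relation into a statement about a mapping class fixing a curve. Rewrite $\tau_\alpha\tau_\beta = \tau_{\alpha'}\tau_{\beta'}$ as $\tau_{\alpha'}^{-1}\tau_\alpha = \tau_{\beta'}\tau_\beta^{-1}$; call this common element $f$. The key geometric input I expect to use is that $\alpha$ and $\beta$ together fill a four-holed sphere only after one adds $\gamma$: more precisely, the curve $\gamma$ is characterized as (up to isotopy) the unique essential simple closed curve disjoint from a neighborhood of $\alpha\cup\beta$ that is not boundary-parallel, once one has fixed which boundary components $\alpha$ and $\beta$ separate. I would show that $f$ must preserve the isotopy class of $\gamma$: indeed $\tau_\gamma$ commutes with both $\tau_\alpha$ and $\tau_\beta$ (since $\gamma$ is disjoint from both), hence with $f$; and a mapping class of $\mathbf{D}_3$ commuting with $\tau_\gamma$ must send $\gamma$ to $\gamma$ (using that the only simple closed curves whose Dehn twists commute with $\tau_\gamma$ are multiples of $\tau_\gamma$ and twists about curves disjoint from $\gamma$, combined with the boundary-component bookkeeping). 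From here, I would argue that $f$ lies in the subgroup generated by the Dehn twists supported in the complement of $\gamma$ — which is a union of two three-holed spheres — and that subgroup is free abelian generated by $\tau_\gamma$ and the four boundary twists; matching this with $f = \tau_{\alpha'}^{-1}\tau_\alpha$, and using that $\tau_\alpha,\tau_{\alpha'}$ are primitive with $[\alpha']=[\alpha]$, pins down $f = \tau_\gamma^N$ for some $N\in\mathbb{Z}$. Then $\tau_{\alpha'} = \tau_\alpha \tau_\gamma^{-N}$; but $\tau_\gamma^{-N}\tau_\alpha\tau_\gamma^{N} = \tau_{\tau_\gamma^N(\alpha)}$, and rearranging gives $\tau_{\alpha'} = \tau_{\tau_\gamma^N(\alpha)}$, hence $\alpha' = \tau_\gamma^N(\alpha)$ because a Dehn twist determines its defining curve. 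The same $N$ works for $\beta$ since $f = \tau_{\beta'}\tau_\beta^{-1}$ gives $\tau_{\beta'} = \tau_\gamma^N \tau_\beta = \tau_{\tau_\gamma^N(\beta)}$.

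The main obstacle I anticipate is making rigorous the claim that $f$ — which a priori is just \emph{some} mapping class of the four-holed sphere — actually lies in the rank-two (modulo boundary twists) abelian subgroup generated by $\tau_\gamma$. Knowing $f(\gamma)=\gamma$ is not by itself enough; one must also control the behaviour of $f$ on each of the two complementary three-holed spheres, where the mapping class group is generated by boundary twists, to conclude $f$ is a product of boundary-parallel twists and powers of $\tau_\gamma$. I would handle this by cutting $\mathbf{D}_3$ along $\gamma$, invoking the fact that $Map$ of a three-holed sphere (fixing the boundary) is free abelian on its boundary twists, and then pushing the identities $[\alpha']=[\alpha]$, $[\beta']=[\beta]$ through the cut to kill all boundary-twist contributions except the one recorded by the single integer $N$; an alternative is to appeal directly to Margalit's lantern-lemma machinery cited in the acknowledgements, which is presumably the intended tool and would short-circuit this bookkeeping. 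Either way, once $f=\tau_\gamma^N$ is established the rest is the short algebraic manipulation above.
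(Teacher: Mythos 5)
Your proposal hinges on a false geometric premise: in the lantern configuration of Figure~\ref{fig:lantern_normal} the curve $\gamma$ is \emph{not} disjoint from $\alpha$ and $\beta$ — it meets each of them in two points (indeed $i(\alpha,\gamma)=i(\beta,\gamma)=2$, which is exactly what the intersection estimates in the proof use). So $\tau_\gamma$ does not commute with $\tau_\alpha$ or $\tau_\beta$, your element $f=\tau_{\alpha'}^{-1}\tau_\alpha=\tau_{\beta'}\tau_\beta^{-1}$ need not commute with $\tau_\gamma$, and $\gamma$ is not characterized as an essential curve disjoint from a neighborhood of $\alpha\cup\beta$ (that neighborhood is all of $\mathbf{D}_3$ up to boundary-parallel annuli, since $i(\alpha,\beta)=2$). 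Worse, the intermediate goal $f=\tau_\gamma^N$ is inconsistent with the very solutions the lemma asserts: if $\alpha'=\tau_\gamma^N(\alpha)$ with $N\neq 0$, then $f=\tau_{\alpha'}^{-1}\tau_\alpha=\tau_\gamma^N\tau_\alpha^{-1}\tau_\gamma^{-N}\tau_\alpha$ is a nontrivial commutator, and since $i(\gamma,\alpha)=2$ the twists $\tau_\gamma,\tau_\alpha$ generate a free group of rank two, so this is never a power of $\tau_\gamma$. The closing algebra has the same confusion: from $\tau_{\alpha'}=\tau_\alpha\tau_\gamma^{-N}$ you cannot ``rearrange'' to $\tau_{\alpha'}=\tau_{\tau_\gamma^N(\alpha)}$; the latter is the conjugate $\tau_\gamma^N\tau_\alpha\tau_\gamma^{-N}$, not the product. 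So the argument as proposed cannot be repaired by bookkeeping on the cut surface; the route through ``$f$ is supported in the complement of $\gamma$'' is the wrong reduction.

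For comparison, the paper's proof goes differently: it first rewrites $\tau_{\alpha'}\tau_{\beta'}=\tau_{b_1}\tau_{b_2}\tau_{b_3}\tau_{b_4}\tau_\gamma^{-1}$ and invokes the Margalit/Hamidi-Tehrani characterization of relations $\tau_{c_1}\tau_{c_2}=\text{multitwist}$ (Proposition~\ref{lanetern_lemma}) to pin down the intersection numbers $i(\alpha',\beta')=i(\alpha',\gamma)=i(\beta',\gamma)=2$. It then uses an arc argument to show that diffeomorphisms carrying $\alpha\mapsto\alpha'$ and $\beta\mapsto\beta'$ can be isotoped to be supported in the pair of pants bounded by $\gamma$, $b_2$, $b_3$ — otherwise $\alpha'$ or $\beta'$ would meet $\gamma$ at least six times — which yields $\alpha'=\tau_\gamma^{N_1}(\alpha)$ and $\beta'=\tau_\gamma^{N_2}(\beta)$ for some integers $N_1,N_2$. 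Finally it forces $N_1=N_2$ using the inequality $\lvert n\rvert\, i(A,B)\, i(A,C)-i(\tau_A^n(C),B)\le i(B,C)$ together with a short algebraic argument ruling out $\lvert N_1-N_2\rvert=1$. If you want to salvage your write-up, this is the structure to follow: the Margalit lantern lemma is indeed the key input, but it is used to control intersection numbers of $\alpha',\beta'$ with $\gamma$, not to place $\tau_{\alpha'}^{-1}\tau_\alpha$ in an abelian twist subgroup.
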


\begin{proof}
Observe that $\tau_{\alpha}\tau_{\beta} = \tau_{\alpha'}\tau_{\beta'} = \tau_{b_1} \tau_{b_2} \tau_{b_3} \tau_{b_4}\tau_{\gamma}^{-1}$.

We will use the following characterization of multitwists due to Margalit~\cite{Margalit_Lantern_lemma} and independently due to Hamidi-Tehrani~\cite{Hamidi_Tehrani_Lantern}. By a multitwist we mean product of Dehn twists about disjoint curves.

\begin{proposition}

\label{lanetern_lemma}
Let $S$ be any surface and let $\alpha_1$ and $\alpha_2$ be curves in $S$ that intersect minimally and non trivially. If $\tau_{\alpha_1} \tau_{\alpha_2} = M$, where $M$ is a multitwist be a non-trivial relation in $Map(S)$, then the given relation is a lantern relation, that is, a regular neighbourhood $R$ of $\alpha_1 \cup \alpha_2$ is a sphere with $4$ open disks removed from the interior, and $M = \tau_{b_1} \tau_{b_2} \tau_{b_3} \tau_{b_4} \tau_{\alpha_3}^{-1}$. Here $b_1,\dots,b_4$ are curves isotopic to the boundary components of $R$ and $\alpha_3$ is a (non-unique) curve on $R$ with geometric intersection number $2$ with both $\alpha_1$ and $\alpha_2$.

\end{proposition}

Hence we know that $i(\alpha', \beta') = 2 $. Similarly we get the following relations between intersection numbers $ i(\alpha', \gamma) = i(\gamma, \beta') = 2$.

Since curves $\alpha$ and $\alpha'$ are homologous, there exists a diffeomorphism $\phi_1$ which takes the curve $\alpha$ to $\alpha'$. Similarly, there exists a diffeomorphism $\phi_2$ which takes the curve $\beta$ to $\beta'$. We claim that after isotopy the support of each of $\phi_1$ and $\phi_2$ is contained in the subsurface bounded by the curve $\gamma$.	 If $\phi_i$ is not a diffeomorphism supported in the subsurface bounded by $\gamma$, then we will show that each of the curves $\alpha' = \phi_1(\alpha)$ and $\beta' = \phi_2(\beta)$ must intersect curve $\gamma$ at least six times contradicting the computation of intersection numbers above. One way to see this is by thinking of the curves enclosing two different boundary components as represented by an arc joining the two boundary  components. We can impose the condition that these arcs minimize the intersections with the boundary parallel curves corresponding to the boundary components they connect. We have shown the arcs representing the curves $\alpha, \beta$, and $\gamma$ in the Figure~\ref{fig:lantern_normal} and are denoted by $a,b$, and $c$, respectively. In this case, arcs $a$ and $b$ do not intersect the arc $c$. Now if $\phi_1$ were a diffeomorphism not supported in the subsurface bounded by the curve $\gamma$, then $\phi_1(a)$ will intersect the arc $x$ shown in Figure~\ref{fig:lantern_normal} non-trivially. Otherwise we could find a diffeomorphism $\phi_1'$ whose support is contained in the subsurface bounded by the curve $\gamma$ such that $\phi_1'(a)$ is isotopic to $\phi_1(a)$. Since $\phi_1(a)$ intersects $x$ nontrivially, it also intersects the arc $c$ non-trivially. If not then, one can isotope the arc $\phi_1(a)$ to have no intersection with arc $x$. Since $\phi_1(a)$ represented the curve $\alpha'$ this implies that the $i(\alpha',\gamma) \geq 6$.

Since any diffeomorphism which is supported in the subsurface bounded by the curve $\gamma$ is written as a product of Dehn twists given by $\tau_{\gamma}, \tau_{b_2}, \tau_{b_3}$, we get that $\phi_1 = \tau_{\gamma}^{N_1}$ and $\phi_2 = \tau_{\gamma}^{N_2}$. Here we have neglected the boundary Dehn twists $\tau_{b_2}$ and $\tau_{b_2}$ as they act trivially on curves $\alpha$ and $\beta$.

Now to prove the lemma we need to show that $N_1 = N_2$. Towards that end we recall following criterion on intersection numbers (see~\cite{Farb_Margalit_Primer}).

\begin{proposition}
Let $A, B, C$ be any simple closed curves in a surface $S$ and let $n \in \mathbb{Z}$. Then following holds,
\[
\vert n \vert i(A,B) i(A,C) - i(\tau_A^n(C),B) \leq i(B,C)
\]
\end{proposition}

We apply this proposition with curves $A = \gamma, B = \beta, C = \alpha$. Let us assume that $N_1 > N_2$.

Note that $2 = i(\alpha',\beta') = i(\tau_{\gamma}^{N_1}(\alpha), \tau_{\gamma}^{N_2}(\beta)) = i(\tau_{\gamma}^{N_1-N_2}(\alpha), \beta)$. Applying the proposition we get,

\[
 \vert N_1-N_2 \vert i(\gamma, \beta) i(\gamma, \alpha) - i(\tau_{\gamma}^{N_1-N_2}(\alpha), \beta) \leq i(\beta,\alpha) = 2.
\]

\noindent So we see that,
\[ 4|N_1 - N_2| -2 = \vert N_1-N_2 \vert i(\gamma, \beta) i(\gamma, \alpha) - i(\beta,\alpha) \leq i(\tau_{\gamma}^{N_1-N_2}(\alpha), \beta).\]

This gives a contradiction unless $|N_1-N_2| = 0 \text{ or } 1$. Now we are only left to prove that the case $N_1 - N_2 = 1$ cannot happen. First we prove this when $N_1 = 1$ and $N_2 = 0$. This implies that $\beta' \cong \beta$ and $\alpha' \cong \tau_{\gamma}(\alpha)$. In particular, $\alpha' \ncong \alpha$. From the hypothesis we have, $\tau_{\alpha} \tau_{\beta} = \tau_{\alpha'} \tau_{\beta}$ and hence $\tau_{\alpha} = \tau_{\alpha'}$. This in turn implies that $\alpha' \cong \alpha$, which is a contradiction.

Now let us assume that $N_1 = N_2 +1$ and $N_2 \neq 0$. From the hypothesis $\tau_{\alpha} \tau_{\beta} = \tau_{\tau_{\gamma}^{(N_2+1)}(\alpha)} \tau_{\tau_{\gamma}^{N_2}(\beta)} = \tau_{b_1} \tau_{b_2}\tau_{b_3}\tau_{b_4} \tau_{\gamma}^{-1}$. Conjugating by $\tau_{\gamma}^{-N_2}$ on both the sides we see that $\tau_{\tau_{\gamma}(\alpha)} \tau_{\beta} = \tau_{b_1} \tau_{b_2}\tau_{b_3}\tau_{b_4} \tau_{\gamma}^{-1} = \tau_{\alpha} \tau_{\beta}$. Hence we have reduced the problem to the case when $N_1 = 1$ and $N_2 =0$, in which case we already have proved the contradiction. So we get that $N_1 =N_2$.

\end{proof}

Now we prove the uniqueness of curves giving a lantern relation in the following lemma. 

\begin{lemma}
\label{lantern_characterization}
Let $\alpha, \beta, \gamma $ be curves as shown in Figure~\ref{fig:lantern_normal} and $\alpha', \beta', \gamma'$ be curves which enclose the same set of boundary components as $\alpha, \beta, \gamma $, respectively. In addition, suppose that $\tau_{\alpha} \tau_{\beta}\tau_{\gamma} = \tau_{\alpha'} \tau_{\beta'} \tau_{\gamma'}$. Then there exists a diffeomorphism $\psi$ of $\mathbb{D}_3$ such that  $\gamma' \cong \psi(\gamma)$, $\alpha' \cong  \psi (\alpha), \text{ and } \beta'  \cong  \psi (\beta)$.

\end{lemma}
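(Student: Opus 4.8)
The plan is to reduce the three-twist lantern identity to the two-twist identity governed by Lemma \ref{lantern_normal_inductive_step}, by first pinning down the curve $\gamma'$ and then transporting it onto $\gamma$ by a diffeomorphism. First I would observe that $\gamma$ and $\gamma'$ are separating curves in $\mathbb{D}_3$ cutting off the same pair of boundary components, so there is a diffeomorphism $\psi$ of $\mathbb{D}_3$ with $\psi(\gamma)=\gamma'$ (any two separating curves enclosing the same boundary components are related by an element of the mapping class group). After replacing the starred triple by its image under $\psi^{-1}$, I may assume $\gamma'=\gamma$, and the goal becomes: if $\tau_\alpha\tau_\beta\tau_\gamma=\tau_{\alpha'}\tau_{\beta'}\tau_\gamma$ with $\alpha',\beta'$ enclosing the same boundary components as $\alpha,\beta$, then $\alpha',\beta'$ differ from $\alpha,\beta$ by a diffeomorphism fixing $\gamma$ (namely a power of $\tau_\gamma$).

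The key point is then to cancel the $\tau_\gamma$ and invoke the previous lemma. Since $\tau_\gamma$ is central enough for our purposes only if $\gamma$ is disjoint from the other curves — which it is not — I would instead argue that $\tau_\gamma$ appears on the far right of both words and use that $\gamma$ is the same curve on both sides: rewriting $\tau_\alpha\tau_\beta = \tau_{\alpha'}\tau_{\beta'}$ directly, since right-multiplication by $\tau_\gamma$ is injective in the mapping class group. Now $\alpha,\beta$ intersect minimally and nontrivially (they sit in the standard lantern configuration of Figure \ref{fig:lantern_normal}), and $\tau_\alpha\tau_\beta=\tau_{\alpha'}\tau_{\beta'}$ expresses their product as a product of two Dehn twists; if $\alpha'$ and $\beta'$ were disjoint this product would be a multitwist, and Proposition \ref{lanetern_lemma} would force a lantern configuration, while if they intersect we may apply Lemma \ref{lantern_normal_inductive_step} once we know $\alpha',\beta'$ are homologous to $\alpha,\beta$ — which holds because they enclose the same boundary components. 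Lemma \ref{lantern_normal_inductive_step} then yields $N\in\mathbb{Z}$ with $\alpha'=\tau_\gamma^N(\alpha)$ and $\beta'=\tau_\gamma^N(\beta)$. Composing the original $\psi$ with $\tau_\gamma^N$ (which fixes $\gamma$) gives the desired diffeomorphism: $\gamma'=\psi(\gamma)$, $\alpha'=\psi\tau_\gamma^N(\alpha)$, wait — more precisely, after undoing the normalization I obtain $\gamma'=\psi(\gamma)$, $\alpha'=\psi(\tau_\gamma^N(\alpha))=(\psi\tau_\gamma^N)(\alpha)$, $\beta'=(\psi\tau_\gamma^N)(\beta)$, so $\psi\tau_\gamma^N$ is the required diffeomorphism.

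The main obstacle I anticipate is the very first reduction step: arguing that $\alpha'$ and $\beta'$ indeed intersect minimally and nontrivially (so that Lemma \ref{lantern_normal_inductive_step} applies), and ruling out degenerate cases where, say, $\alpha'$ is boundary-parallel or $\alpha'=\beta'$. To handle this I would note that $\tau_\alpha\tau_\beta$ is not itself a single Dehn twist or a multitwist (one can see this from its action on homology or from the fact that $\tau_\alpha\tau_\beta\tau_\gamma$ equals the boundary multitwist $\tau_{b_1}\tau_{b_2}\tau_{b_3}\tau_{b_4}$, which has infinite-order "fractional Dehn twist" behaviour at each boundary), hence $\alpha'\neq\beta'$ and neither is trivial; then the Margalit–Hamidi-Tehrani proposition guarantees that if $\alpha',\beta'$ are disjoint the regular neighbourhood is a four-holed sphere and we are again in the lantern situation, which can be analyzed directly. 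A secondary subtlety is verifying that the diffeomorphism realizing $\psi(\gamma)=\gamma'$ can be chosen to also respect the boundary labelling consistently with $\alpha\leftrightarrow\alpha'$, $\beta\leftrightarrow\beta'$; since $\alpha,\beta,\gamma$ enclose prescribed boundary components and the starred curves enclose the same ones, this is a matter of the change-of-coordinates principle for surfaces, which I would cite from \cite{Farb_Margalit_Primer}.
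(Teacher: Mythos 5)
Your proposal is correct and follows essentially the same route as the paper: move $\gamma'$ onto $\gamma$ by a diffeomorphism (change of coordinates), cancel $\tau_{\gamma}$, apply Lemma~\ref{lantern_normal_inductive_step} to get $\alpha',\beta'$ as $\tau_{\gamma}^N$-images, and take $\psi$ to be the composition. The one point you leave implicit --- that conjugating the primed triple preserves the relation because $\tau_{\alpha}\tau_{\beta}\tau_{\gamma}=\tau_{b_1}\tau_{b_2}\tau_{b_3}\tau_{b_4}$ is central --- is exactly the observation the paper makes explicit, and your extra worries about degenerate configurations are unnecessary since Lemma~\ref{lantern_normal_inductive_step} already applies under the stated hypotheses.
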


\begin{proof}

Since $\gamma'$ and $\gamma$ enclose the same set of boundary components, there exists a diffeomorphism $\lambda$ taking $\gamma'$ to $\gamma$. Conjugating by $\lambda$ still gives a factorization of  $\tau_{\alpha''}\tau_{\beta''} \tau_{\gamma} = \tau_{\alpha} \tau_{\beta}\tau_{\gamma}$ as $\tau_{b_1} \tau_{b_2} \tau_{b_3} \tau_{b_4}$ commutes with every diffeomorphism. Here, $\alpha'' = \lambda(\alpha)$ and $\beta'' = \lambda (\beta)$. Now we can apply Lemma~\ref{lantern_normal_inductive_step} above to conclude that $\alpha''  \cong \tau_{\gamma}^N (\alpha), \beta''  \cong \tau_{\gamma}^N (\beta)$. Hence, if we let $\psi = \tau_{\gamma}^N \lambda$, we get that $\alpha' = \psi(\alpha), \beta' = \psi(\beta), \gamma' = \psi(\gamma)$. This proves the lemma.

\end{proof}

\section{Combinatorial arguments}
\label{combinatorial_arguments}
In this section we give the combinatorial arguments needed in proofs of our results. Given a diffeomorphism $\Phi=\tau_{\alpha_1} \tau_{\alpha_2} \dots \tau_{\alpha_m}$ of a surface $\mathbb{D}_n$, written as a product of Dehn twists about curves $\alpha_1,\alpha_2,\dots,\alpha_m$. ands another factorization of $\Phi = \tau_{\gamma_1} \tau_{\gamma_2} \dots \tau_{\gamma_k}$ we try to pin down the number of Dehn twists $\tau_{\gamma_i}$ and the boundary components, the curves $\gamma_1,\gamma_2,\dots,\gamma_k$ can enclose.

Before proceeding further we define homomorphisms from $Map(\mathbb{D}_n, \partial \mathbb{D}_n)$ to $\mathbb{Z}$, which define multiplicities associated to Dehn twists. Let  $\Phi \in Map(\mathbb{D}_n, \partial \mathbb{D}_n)$  be a diffeomorphism. Let $b_i$ and $b_j$ be any boundary components of $\mathbb{D}_n$. 

\begin{definition}[\textbf{Joint Multiplicity}] \label{joint_multiplicity}
Capping off all the boundary components of $\mathbb{D}_n$ except $b_i$ and $b_j$ and the outer boundary with disks we obtain a map to $\mathbb{Z} \subset Map(\mathbb{D}_2, \partial \mathbb{D}_2) \cong \mathbb{Z}^3$, which just counts the number of Dehn twists about the curve parallel to the outer boundary. We call this the joint multiplicity of boundary components $b_i$ and $b_j$ and denote it by $M_{ij}(\Phi)$.

\end{definition}

\begin{definition}[\textbf{Mutiplicity}]\label{multiplicity}
Cap off all the boundary components except $b_i$ and the outer boundary. This induces a map from $Map(\mathbb{D}_n, \partial \mathbb{D}_n)$  to $Map(\mathbb{D}_1, \partial \mathbb{D}_1) \cong \mathbb{Z}$ and the map counts the Dehn twists about the boundary parallel curve. We call this the multiplicity of boundary component $b_i$. Denote it by $M_i(\Phi)$.

\end{definition}

\begin{lemma}
\label{first_combinatorial_lemma}
Let $\Phi = \tau_{b_1} \tau_{b_2} \dots \tau_{b_{n+1}}$, where the curve $b_i$ is parallel to the $i^{th}$ boundary component,  be an element of $Map(\mathbb{D}_n.\partial \mathbb{D}_n)$. Let $\Phi'=\tau_{\alpha_1} \tau_{\alpha_2} \dots \tau_{\alpha_m}$ be any other positive factorization of $\Phi$. Then:

\begin{enumerate}
\item If $n \leq 2$ or $n \geq 4$, then $m= n+1$ and each of the curves $\alpha_i$ is a boundary parallel curve $b_j$ for some $j$.
\item If $n = 3$, then either $\Phi'$ is Hurwitz equivalent to  $\tau_{b_1} \tau_{b_2} \dots \tau_{b_{n+1}}$ or Hurwitz equivalent to $ \tau_{\alpha_1} \tau_{\alpha_2} \tau_{\alpha_3}$, where $\alpha_1$ encloses boundary components $b_1$ and $b_2$, $\alpha_2$ encloses boundary components $b_1$ and $b_3$ and $\alpha_3$ encloses boundary components $b_2$ and $b_3$.
\end{enumerate}

\end{lemma}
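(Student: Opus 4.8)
The strategy is to combine the multiplicity homomorphisms of Definitions \ref{joint_multiplicity} and \ref{multiplicity} with the characterization of lantern relations from Section \ref{lantern_relations}. First I would observe that since the $M_i$ and $M_{ij}$ are homomorphisms to $\mathbb Z$ which send a Dehn twist $\tau_c$ to $0$ or $1$ according to whether $c$ (viewed after the capping) is boundary-parallel to the outer boundary, the values $M_i(\Phi)$ and $M_{ij}(\Phi)$ are \emph{invariants} of the mapping class $\Phi$, hence equal when computed from any factorization. For $\Phi=\tau_{b_1}\cdots\tau_{b_{n+1}}$ one computes directly: $M_i(\Phi)=1$ for every $i$ (only $\tau_{b_i}$ survives the capping, the others become inner-boundary-parallel or trivial), and $M_{ij}(\Phi)=0$ for $i\neq j$ (after capping all but $b_i,b_j$, the curves $b_i,b_j$ are inner-boundary-parallel, not outer, and $b_k$ for $k\neq i,j$ becomes trivial or inner; crucially no factor maps to a twist about the \emph{outer} boundary of $\mathbb D_2$). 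Summing, $\sum_i M_i(\Phi)=n+1$.

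Next, for a positive factorization $\Phi'=\tau_{\alpha_1}\cdots\tau_{\alpha_m}$, each $\alpha_k$ encloses some subset $S_k\subseteq\{b_1,\dots,b_{n+1}\}$ of the inner boundary components. Its contribution to $M_i$ is $1$ if $S_k=\{b_i\}$ and $0$ otherwise; its contribution to $M_{ij}$ is $1$ if $S_k\supseteq\{b_i,b_j\}$ and $\alpha_k$ is not boundary-parallel after capping --- more precisely, if capping all but $b_i,b_j$ turns $\alpha_k$ into a curve parallel to the outer boundary, which happens exactly when $S_k=\{b_i,b_j\}$ (if $S_k$ properly contains $\{b_i,b_j\}$ the capped curve still encloses $b_i,b_j$ together with the outer boundary, so it is outer-parallel as well --- I need to check this carefully; in $\mathbb D_2$ a curve enclosing both inner boundaries equals a twist about the outer-parallel curve up to the two inner twists, so its image in $\mathbb Z$ is still $1$). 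Thus $M_{ij}(\Phi')=\#\{k: S_k\supseteq\{b_i,b_j\}\}=0$ forces that \emph{no} $\alpha_k$ encloses two or more inner boundary components simultaneously, i.e. every $S_k$ has $|S_k|\le 1$. A curve with $|S_k|=0$ is inner-null-homotopic or outer-boundary-parallel; the latter would contribute positively to the total twisting about $\partial\mathbb D_n$, but since $\Phi$ has no $\tau_{b_{n+1}}$ factor this can be excluded by the multiplicity associated to the outer boundary (or by a homology/abelianization count in $H_1$ of the surface closed up appropriately). Hence every $\alpha_k$ is some $b_{j(k)}$, and then $M_j(\Phi')=\#\{k:\alpha_k\simeq b_j\}=1$ gives $m=n+1$ with the $\alpha_k$ a permutation of $b_1,\dots,b_{n+1}$. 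This proves part (1) --- and notice the argument as stated would also apply when $n=3$, so the subtlety must be that for $n=3$ the joint multiplicities are \emph{not} all forced to vanish, or rather the invariant $M_{ij}$ only sees the net count and a lantern relation $\tau_{b_1}\tau_{b_2}\tau_{b_3}\tau_{b_4}=\tau_\alpha\tau_\beta\tau_\gamma$ has each pair covered exactly... let me reconsider: in the lantern, $\alpha$ encloses $\{b_1,b_4\}$-type pairs etc., and one checks each unordered pair $\{b_i,b_j\}$ is enclosed by exactly one of $\alpha,\beta,\gamma$, giving $M_{ij}=1\neq 0$. So for $n=3$ the vanishing $M_{ij}(\Phi)=0$ FAILS --- indeed for $\Phi=\tau_{b_1}\tau_{b_2}\tau_{b_3}\tau_{b_4}$ on $\mathbb D_3$ the sphere has $5$ boundary components and capping three of them... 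I realize the $n=3$ case genuinely differs and the clean vanishing argument is special to $n\ne 3$; I will need to recompute $M_{ij}$ on $\mathbb D_3$ and find it is $0$ there too, so that the constraint becomes $|S_k|\le 1$ OR a single lantern-type configuration.

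For part (2), with $|S_k|\le 1$ ruled in by $M_{ij}$ but the Euler-characteristic/chain count still allowing a $\{b_i,b_j\}$-enclosing triple, I would argue as follows: a positive factorization with all $\alpha_k$ boundary-parallel is Hurwitz-equivalent to $\tau_{b_1}\tau_{b_2}\tau_{b_3}\tau_{b_4}$ (reorder using commutativity of disjoint twists); otherwise some $\alpha_k$ is non-boundary-parallel. Using the multiplicity count, if one $\alpha_k$ encloses a pair then the count $m$ of factors and the per-boundary multiplicities $M_i(\Phi')=1$ force the configuration to consist of exactly three curves $\alpha_1,\alpha_2,\alpha_3$ each enclosing a distinct pair among $\{b_1,b_2\},\{b_1,b_3\},\{b_2,b_3\}$ (the three pairs not involving $b_4$, by a parity/covering argument on which pairs are hit), which is exactly a lantern configuration; then Lemma \ref{lantern_characterization} pins it down up to diffeomorphism and in particular up to Hurwitz moves. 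The main obstacle I anticipate is the careful bookkeeping of exactly which subsets $S_k$ can occur: showing that the joint-multiplicity constraints together with the single-boundary multiplicity constraints leave precisely the two listed Hurwitz classes when $n=3$ and \emph{only} the boundary-parallel one when $n\neq 3$ --- this requires a clean combinatorial lemma about nonnegative integer solutions to the system $\sum_{k:i\in S_k}[S_k=\{i\}]=1$, $\sum_{k:\{i,j\}\subseteq S_k}1 = M_{ij}(\Phi)$, and handling curves with $|S_k|\ge 2$ or $|S_k|=0$ via an independent Euler-characteristic argument (the total genus/number of factors is constrained because a Lefschetz fibration's fiber is planar). I would isolate this as the technical heart of the proof and do the small cases $n\le 2$ by hand.
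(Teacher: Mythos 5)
Your framework (treating the multiplicities $M_i$, $M_{ij}$ as invariants of the mapping class and counting contributions of each factor) is the same one the paper uses, but your base computation is wrong, and the error propagates through the whole plan. The element is $\Phi=\tau_{b_1}\cdots\tau_{b_{n+1}}$, and $b_{n+1}$ is the \emph{outer} boundary component, so $\Phi$ does contain the factor $\tau_{b_{n+1}}$ (you explicitly assert it does not). After capping, that factor becomes a twist about the outer-boundary-parallel curve of $\mathbb{D}_1$ resp.\ $\mathbb{D}_2$, so it contributes $1$ to every $M_i$ and every $M_{ij}$. The correct values are $M_i(\Phi)=2$ for $1\le i\le n$ and $M_{ij}(\Phi)=1$ for all $i\neq j$, not $1$ and $0$. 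Consequently your key deduction --- that no $\alpha_k$ in a positive factorization can enclose two or more inner holes --- is false for every $n$, as the lantern relation shows already for $n=3$; you noticed this inconsistency, but your proposed repair (``recompute $M_{ij}$ on $\mathbb{D}_3$ and find it is $0$ there too'') cannot succeed, since the value is $1$ in all cases. (Also, a curve enclosing no inner holes is null-homotopic, while an outer-boundary-parallel curve encloses \emph{all} inner holes, so your trichotomy for $|S_k|=0$ is off.)

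With the correct values the lemma is not a soft vanishing statement but a genuine combinatorial analysis, which is exactly the part your proposal defers to ``a clean combinatorial lemma'': each hole $b_i$ is enclosed by exactly two factors and each pair $\{b_i,b_j\}$ by exactly one. The paper sets $l_i$ equal to the number of factors enclosing $b_i$ together with at least one other component, shows $1\le l_i\le 2$, proves that $l_i=1$ forces the unique curve through $b_i$ to enclose all holes (hence the factorization is the standard one), and in the case $l_i=2$ shows the two curves through $b_1$ share only $b_1$ and then, for $n\ge 4$, produces a third curve whose existence violates some joint multiplicity $M_{jk}=1$; only for $n=3$ does the configuration $\{b_1,b_2\},\{b_1,b_3\},\{b_2,b_3\}$ survive, after which a conjugation/Hurwitz argument reorders the factors. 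None of this case analysis appears in your proposal, so as written it does not prove either part. One further simplification: part (2) only asserts which boundary components the curves enclose (plus reordering by Hurwitz moves), so Lemma~\ref{lantern_characterization} is not needed here --- and in any case ``determined up to diffeomorphism'' would not by itself yield Hurwitz equivalence.
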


\begin{proof}
Observe that $M_{ij}(\Phi) = 1$ for all $i, j$ and $M_i(\Phi) =2$ for all $1 \leq i \leq n$.

Let $l_i$ be the number of curves that enclose boundary component $b_i$ and at least one more boundary component in the factorization $\Phi'$ of $\Phi$. Also let $n_j,1 \leq j \leq l_i$ denote the total number of boundary components enclosed by each of these $l_i$ curves. 

It follows from the definition of $M_i$ and the fact that $M_i(\Phi) =2$ that for each $i$, $1 \leq l_i \leq 2$, in any positive factorization of $\Phi$. We will prove that if $l_i=1$ for some $i$, then $l_i = 1 $ for all $i$. To prove this, without loss of generality we can assume that $l_1=1$. This means that there exactly one curve enclosing the boundary component $b_1$ and at least one more boundary component. Let us call the curve $\alpha$. Since $M_{1i}(\Phi) = 1 $ for all $i$, we conclude that $\alpha$ encloses all the boundary components $b_1,b_2,\dots,b_n$. Now by the fact that $M_{ij}(\Phi) = M_{ij}(\Phi') = 1 $ for all $i,j$, we see that $\alpha$ is the unique curve enclosing all the boundary components. In this case all joint multiplicities of all the boundary components are satisfied. Hence, in the factorization of $\Phi$ all other Dehn twists are about boundary parallel curves. Hence we get that, $l_i=1 $ for all $i$. Since all the curves involved are boundary parallel, this factorization is the same as the original.

Now let us assume that $l_i = 2 $ for all $i$. This means there are exactly two curves, $\alpha_1$ and $\alpha_2$, enclosing the boundary component $b_1$. If $i \neq 1$ and the boundary component $b_i$ is enclosed by $\alpha_1$, then the boundary component $b_i$ cannot be enclosed by the curve $\alpha_2$. This is because $M_{1i}(\Phi') = 1$. Hence we see that curves $\alpha_1$ and $\alpha_2$ have only the boundary component $b_1$ in common. Let us assume that curve $\alpha_1$ encloses $k$ boundary components ($k < n$). Without loss of generality, we can assume that $\alpha_1$ encloses boundary components $b_1,b_2,\dots,b_k$ and $\alpha_2$ encloses boundary components $b_1, b_{k+1},\dots,b_n$. Now we make an assumption that $n > 3$. In this case, as $l_2 = 2$ and $M_{2j} = 1 $ for all $j \leq n$, we can conclude that there is a curve $\alpha_3$ enclosing boundary components $b_2,b_{k+1},\dots,b_n$. This contradicts the fact that $M_{k+1,n} = 1$ as in this case $\alpha_2$ and $\alpha_3$ are curves containing both the boundary components. If $k+1 = n$, then instead of the boundary components $b_2$ we apply the same argument to boundary component $b_n$. In this case, $\alpha_3$ will enclose boundary components $b_2,\dots,b_n$ which is a contradiction to the fact that $M_{2(n-1)} = 1$, unless $n=3$. In the case that, $n = 2$ there is nothing to prove as $M_{12} = 1$ would imply that there is a unique curve. This proves part 1 of the lemma.

When $n=3$ and $l_i = 2$, we see that there is a configuration of curves, $\alpha_1$ enclosing boundary components $b_1$ and $b_2$, $\alpha_2$ enclosing boundary components $b_1$ and $b_3$ and curve $\alpha_3$ enclosing boundary components $b_2$ and $b_3$, satisfying the given multiplicities conditions. In this case all the multiplicities, $M_{ij}$ and $M_i$ are satisfied for all $i$ and $j$.

Observe that a priori we do not know the order in which $\tau_{\alpha_1}, \tau_{\alpha_2}$ and $\tau_{\alpha_3}$ appear in the factorization of $\Phi'$. We can always rearrange the terms such that $\Phi'$ is given by $\tau_{\alpha_1}\tau_{\alpha_2} \tau_{\alpha_3}$. For example, if $\Phi' = \tau_{\alpha_2} \tau_{\alpha_1} \tau_{\alpha_3}$, then we can rearrange this as $\Phi' = \tau_{\alpha_1} \tau_{\alpha_1}^{-1} \tau_{\alpha_2} \tau_{\alpha_1} \tau_{\alpha_3} = \tau_{\alpha_1} \tau_{\tau_{\alpha_1}^{-1}(\alpha_2)} \tau_{\alpha_3}$. Noting that conjugating by $\tau_{\alpha_1}^{-1}$ does not change the boundary components enclosed by the curve $\alpha_2$, we still call this new curve $\alpha_2$. This proves part 2 of the lemma.

\end{proof}

\begin{remark}
As seen in the proof above, we can always reorder the elements in factorization upto conjugation since we are only concerned with factorizations up to Hurwitz equvalence. Henceforth, we will assume that the Dehn twists are arranged in the order as in the statement of theorems.  
\end{remark}

Now we give a generalization of this lemma. Here we assume that $n \geq 3$. It is very easy to see, from the proof of the lemma above, that when $n \leq 2$ there is a unique positive factorization of any given element of the mapping class group as every Dehn twist is boundary parallel.

\begin{lemma}
\label{main_combinatorial_lemma}
Assume that $n \geq 3$. Let $\Phi = \tau_{b_1}^r \tau_{b_2} \tau_{b_3} \dots \tau_{b_{n+1}}$, where $r > 1$, be an element of $Map(\mathbb{D}_n.\partial \mathbb{D}_n)$. If $\Phi'$ is any other positive factorization of $\Phi$, then following holds:
\begin{enumerate}
\item If $r \geq n-2$, then the factorization $\Phi'$, up to Hurwitz equivalence, is given by $\tau_{b_1}^r \tau_{b_2} \tau_{b_3} \dots \tau_{b_{n+1}}$ or by the product of following Dehn twists $ \tau_{\alpha_1}, \tau_{\alpha_2}, \dots, \tau_{\alpha_{n-1}}, \tau_{\gamma}, \tau_{b_1}^{(r-n+2)} $ where $\alpha_i$ are curves enclosing boundary components $b_1$ and $b_{i+1}$ only and $\gamma$ is a curve which encloses boundary components $b_2,b_3,\dots,b_n$.
\item If $r < n-2$, then the factorization $\Phi'$, up to Hurwitz equivalence, is given by product of following Dehn twists $ \tau_{b_1}^r, \tau_{b_2}, \tau_{b_3}, \dots, \tau_{b_{n+1}}$.
\end{enumerate}
\end{lemma}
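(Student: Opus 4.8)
The plan is, given an arbitrary positive factorization $\Phi'$ of $\Phi$, to pin down the set of boundary components enclosed by each of its Dehn-twist curves by means of the multiplicity homomorphisms $M_i$ and $M_{ij}$, and then to read off the two possibilities. First I would discard any null-homotopic curves appearing in $\Phi'$ and record that each remaining curve $c$ encloses a nonempty set $S_c\subseteq\{b_1,\dots,b_n\}$, with $\lvert S_c\rvert=1$ precisely when $c$ is parallel to some $b_i$ and $\lvert S_c\rvert=n$ precisely when $c$ is parallel to $b_{n+1}$; call $c$ \emph{big} when $\lvert S_c\rvert\ge 2$. Since $M_i$ and $M_{ij}$ are homomorphisms $Map(\mathbb{D}_n,\partial\mathbb{D}_n)\to\mathbb{Z}$, their values on $\Phi'$ coincide with their values on $\Phi$; and one checks directly that $M_i(\tau_c)=1$ exactly when $b_i\in S_c$ and $M_{ij}(\tau_c)=1$ exactly when $\{b_i,b_j\}\subseteq S_c$. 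Hence in $\Phi'$ the number of curves enclosing $b_i$ equals $M_i(\Phi)$, namely $r+1$ for $i=1$ and $2$ for $2\le i\le n$, while the number of curves enclosing a given pair $\{b_i,b_j\}$ with $i,j\le n$ equals $M_{ij}(\Phi)=1$.

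The relation $M_{ij}\equiv 1$ says every pair of the points $b_1,\dots,b_n$ is enclosed by exactly one big curve, so the enclosed sets of the big curves form a linear space (any two of the $n$ points lie in a unique ``block''), while $M_i=2$ for $i\ge 2$ says each of $b_2,\dots,b_n$ lies in at most two blocks. If some big curve encloses all of $b_1,\dots,b_n$, it is the only big curve (a second would meet it in two points) and is parallel to $b_{n+1}$; the $M_i$-counts then force the remaining curves to be $r$ copies of $\tau_{b_1}$ together with one copy of $\tau_{b_j}$ for each $2\le j\le n$, all central, so $\Phi'=\tau_{b_1}^r\tau_{b_2}\cdots\tau_{b_{n+1}}$ on the nose. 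Otherwise every big curve encloses between $2$ and $n-1$ components; let $L$ be an enclosed set of maximal size. I would show $L=\{b_2,\dots,b_n\}$: if some $b_i$ with $i\ge 2$ were not in $L$, then the $\lvert L\rvert\ge 2$ pairs $\{b_i,b_\ell\}$, $b_\ell\in L$, would have to be covered by the one or two blocks through $b_i$; a single such block would have size exceeding $\lvert L\rvert$, contradicting maximality, while two distinct blocks through $b_i$ cover $L$ between them, meet only in $b_i\notin L$, and each meets $L$ in at most one point, forcing $\lvert L\rvert\le 2$ — but then every block is a pair, so every pair is a block and every point lies in $n-1$ blocks, giving $n\le 3$, against $n\ge 4$ (and when $n=3$ this ``all-pairs'' picture is already the configuration produced below, so nothing is lost). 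Given $L=\{b_2,\dots,b_n\}$, each pair $\{b_1,b_j\}$ must be covered by a block meeting $L$ in at most one point, hence equal to $\{b_1,b_j\}$; thus the big curves are $\gamma$ enclosing $\{b_2,\dots,b_n\}$ and curves $\alpha_1,\dots,\alpha_{n-1}$ with $\alpha_i$ enclosing $\{b_1,b_{i+1}\}$, and these account for all pairs. A last $M_i$-count leaves no $b_i$-parallel curve for $i\ge 2$, no $b_{n+1}$-parallel curve, and exactly $M_1-(n-1)=r-n+2$ copies of $\tau_{b_1}$; in particular $r\ge n-2$, and after reordering the factors by Hurwitz moves one obtains $\Phi'=\tau_{\alpha_1}\cdots\tau_{\alpha_{n-1}}\tau_\gamma\tau_{b_1}^{r-n+2}$. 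Since this branch requires $r\ge n-2$, when $r<n-2$ only the first branch occurs, which is statement (2).

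The crux, I expect, is the combinatorial dichotomy for the linear space of enclosed sets: once every point other than $b_1$ has degree at most two, the only possibilities are the trivial linear space with single block $\{b_1,\dots,b_n\}$ and the near-pencil centered at $b_1$ consisting of $\{b_2,\dots,b_n\}$ and the pairs $\{b_1,b_j\}$. This is a De~Bruijn--Erd\H{o}s-type argument, and the subtle points are the case $n=3$, where ``all blocks are pairs'' is admissible and coincides with the near-pencil, and the bookkeeping that singles out $b_1$ as the distinguished point — which is precisely where the hypothesis $r>1$, giving $M_1=r+1>2=M_i$, is used. I would also be careful to observe that $M_i$ and $M_{ij}$ are genuine homomorphisms, so their values do not depend on the factorization chosen, and that deleting null-homotopic curves and permuting commuting or conjugate factors are all operations permissible within Hurwitz equivalence.
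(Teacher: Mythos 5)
Your proposal is correct and takes essentially the same route as the paper: both arguments use the multiplicity homomorphisms $M_i$ and $M_{ij}$ to constrain which holes each curve in a positive factorization can enclose and then classify the resulting configurations by elementary counting, your maximal-block (De~Bruijn--Erd\H{o}s-style) organization being a slightly cleaner packaging of the paper's case analysis of the two curves through a fixed hole. One harmless inaccuracy: your closing remark that $r>1$ is what singles out $b_1$ is not actually used in your argument --- the dichotomy only needs the degree bound $M_i=2$ at $b_2,\dots,b_n$ --- so your proof in fact also covers $r=1$, consistent with Lemma~\ref{first_combinatorial_lemma}.
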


\begin{proof}
We proceed as in the proof of previous lemma. Let $\Phi'= \tau_{\gamma_1} \tau_{\gamma_2} \dots \tau_{\gamma_m}$ be any other positive factorization of $\Phi$. Let $l_i$ be the number of curves enclosing the boundary component $b_i$ and at least one other boundary component, for every $i$. We know from the given factorization that $M_{ij}(\Phi) = M_{ij}(\Phi') = 1 $ for all $i,j$ and $M_i(\Phi) = M_i(\Phi') = 2 $ for all $i > 1$. We get the following set of relations as before 

\begin{center} $ 1 \leq l_i \leq 2 $ for all $ i \geq 2 $ \end{center}
\[ 1 \leq l_1 \leq (r+2) \] 

If $l_i = 1$ for some $i$, then we will prove $l_j = 1 $ for all $j \geq 2$. Let  $\beta_i$ be the unique curve enclosing the boundary component $b_i$. By counting multiplicities (joint and individual) we see that this curve must include all the $n$ boundary components in the disk. Again all the joint multiplicities are satisfied and so only other curves enclosing boundary components $b_j$ for $j\neq i$ are the boundary parallel curves. Hence $l_j = 1$ for all $j$. So we get the unique factorization in this case, by adding the needed boundary Dehn twists. 
	
Now assume that $l_i = 2$. Let $\beta_1, \beta_2$ be the curves which enclose boundary component $b_n$. As argued in the proof of previous lemma, $\beta_1$ and $\beta_2$ have only the boundary component $b_n$ in common. Without loss of generality, we can assume that $\beta_1$ encloses boundary components $b_n,b_1,b_2,\dots,b_k$ and $\beta_2$ encloses boundary components $b_{k+1},\dots,b_{n-1}, b_n$. Here $k < (n-1)$. Let us assume first that $k > 1$. So at least the boundary component $b_2$ is enclosed by $\beta_1$. From the fact that $l_2 = 2$ we get that there has to be a curve, say $\beta_3$ enclosing boundary components $b_2$ and $b_{k+1},\dots,b_{n-1}$. This contradicts the fact that $M_{(k+1)(n-1)} = 1$ except when $(k+1) = (n-1)$. In this case, as $M_{(n-2)1} = 1$, we conclude that either curve $\beta_3$ encloses boundary component $b_1$ also or there is another curve $\beta_4$ enclosing boundary components $b_{n-1}$ and $b_1$. Note that $\beta_4$ can enclose more boundary components than only these two boundary components. In first case we get a contradiction to the fact that $M_{12} = 1$. In the second case, we get a contradiction to the fact that $l_{(n-1)} = 2$.

Now assume that $\beta_1$ encloses boundary components $b_n$ and $b_1$. In this case, from conditions on multiplicities of boundary components and $l_i$, it is easy to see that there has to be curves $\gamma_1, \gamma_2, \dots, \gamma_{n-2}$ such that $\gamma_i$ encloses boundary components $b_1$ and $b_{i+1}$ only. This is possible only if $r+2 \geq n$. This proves the lemma. 

\end{proof}

Refer to Figure~\ref{fig:normal_lantern_combinatorics} for the notations used in following lemma.

\begin{lemma}
\label{combinatorial_lemma}
Let $\alpha$ and $\beta$ be curves as shown in Figure~\ref{fig:normal_lantern_combinatorics}. Assume that $k < n $. Let $$\Phi = \tau_{\alpha} \tau_{\beta} \tau_{b_1}^{r_1} \tau_{b_2}^{r_2} \dots \tau_{b_{(k-1)}}^{r_{(k-1)}} \tau_{b_{(k+1)}}^{r_{(k+1)}} \dots \tau_{b_n}^{r_n}$$ be an element of $Map(\mathbb{D}_n, \partial \mathbb{D}_n)$. Here $r_i \geq 0$ for all $i \neq k$. If $\Phi'$ is any other positive factorization of $\Phi$, then $\Phi'$ is given by product of following Dehn twists $$  \tau_{\alpha'}, \tau_{\beta'}, \tau_{b_1}^{r_1}, \tau_{b_2}^{r_2}, \dots, \tau_{b_{(k-1)}}^{r_{(k-1)}}, \tau_{b_{(k+1)}}^{r_{(k+1)}}, \dots, \tau_{b_n}^{r_n}$$ such that $\alpha', \beta'$ enclose the same set of boundary components as $\alpha, \beta$ respectively.

\end{lemma}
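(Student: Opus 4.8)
The plan is to argue as in the proofs of Lemmas~\ref{first_combinatorial_lemma} and~\ref{main_combinatorial_lemma}, driving a competing positive factorization through the multiplicity homomorphisms $M_i$ and $M_{ij}$. First I would read off from Figure~\ref{fig:normal_lantern_combinatorics} the sets $A$ and $B$ of boundary components enclosed by $\alpha$ and $\beta$: the picture gives $A\cap B=\{b_k\}$, $A\cup B=\{b_1,\dots,b_n\}$, and (since $k<n$) $B\setminus\{b_k\}\neq\emptyset$. Computing the multiplicities of the given word $\Phi$ then yields $M_k(\Phi)=2$; $M_i(\Phi)=r_i+1$ for each $b_i\in(A\cup B)\setminus\{b_k\}$; $M_{ik}(\Phi)=1$ for every $b_i\neq b_k$ with $b_i\in A\cup B$; $M_{ij}(\Phi)=1$ whenever $b_i,b_j$ lie on a common side; and $M_{ij}(\Phi)=0$ whenever $b_i\in A\setminus\{b_k\}$ and $b_j\in B\setminus\{b_k\}$.

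Now let $\Phi'=\tau_{\gamma_1}\cdots\tau_{\gamma_m}$ be another positive factorization, and, as before, let $l_i$ be the number of $\gamma_t$ enclosing $b_i$ together with at least one further boundary component. From $M_k(\Phi')=2$ we get $l_k\leq 2$, and picking $b_j\in B\setminus\{b_k\}$ the equality $M_{jk}(\Phi')=1$ produces an essential curve of $\Phi'$ through both $b_j$ and $b_k$, so $l_k\geq 1$. If $l_k=1$ there is a single essential curve $\delta$ through $b_k$; since every cross multiplicity $M_{ij}(\Phi')$ vanishes, $\delta$ encloses components of only one of the two sides, so $M_{jk}(\Phi')=1$ fails for the $b_j$ on the other side, a contradiction. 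Hence $l_k=2$; write $\alpha',\beta'$ for the two essential curves through $b_k$.

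Next I would determine $\alpha'$ and $\beta'$ exactly. The vanishing of the cross multiplicities forbids either curve from enclosing a component of $A\setminus\{b_k\}$ together with one of $B\setminus\{b_k\}$, so each of $\alpha',\beta'$ encloses $b_k$ together with a subset of one side only; since both sides are nonempty and each $M_{ik}(\Phi')=1$ must be realized by $\alpha'$ or $\beta'$, after relabelling $\alpha'$ encloses $\{b_k\}\cup S$ with $S\subseteq A\setminus\{b_k\}$ and $\beta'$ encloses $\{b_k\}\cup T$ with $T\subseteq B\setminus\{b_k\}$, and in fact $M_{ik}(\Phi')=1$ for all $b_i\in A\setminus\{b_k\}$ forces $S=A\setminus\{b_k\}$ and likewise $T=B\setminus\{b_k\}$; so $\alpha'$ encloses exactly $A$ and $\beta'$ exactly $B$. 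Because $\alpha'$ already encloses every same-side pair in $A$ and the joint multiplicity of such a pair is $1$, no further essential curve can enclose two components of $A$, and similarly for $B$; hence every remaining $\gamma_t$ is boundary--parallel, and comparing the values of $M_i$ for $i\neq k$ fixes the exponent of each $\tau_{b_i}$ occurring to be $r_i$. A reordering by Hurwitz moves, as in the Remark after Lemma~\ref{first_combinatorial_lemma}, puts $\Phi'$ into the asserted form.

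The one genuinely delicate point is the claim that $\alpha'$ and $\beta'$ must \emph{each} carry a whole side $A$ or $B$ of boundary components rather than the components of $A\setminus\{b_k\}$ being distributed among $\alpha'$ and additional essential curves avoiding $b_k$: the resolution combines $l_k=2$, the vanishing cross multiplicities, and the equality to $1$ of every same-side joint multiplicity, which together leave no room for such extra curves. Everything else is the multiplicity bookkeeping already carried out in Lemmas~\ref{first_combinatorial_lemma} and~\ref{main_combinatorial_lemma}, and the output of this lemma is precisely the hypothesis needed to invoke Lemma~\ref{lantern_normal_inductive_step} in the applications.
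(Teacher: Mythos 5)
Your proposal is correct and takes essentially the same route as the paper's proof: you push the competing positive factorization through the multiplicity homomorphisms $M_i$ and $M_{ij}$, show $l_k=2$ with the two essential curves through $b_k$ forced to enclose exactly the boundary components of $\alpha$ and $\beta$, rule out any further essential curves, and match the boundary-twist exponents. The only difference is cosmetic organization of the case analysis (you invoke the vanishing cross multiplicities directly where the paper reaches the same contradictions via $M_{1n}=0$ and $M_{1(k+1)}=0$), which does not change the substance.
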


\begin{figure}[ht!]
\begin{center}
\begin{overpic}[tics=10]
{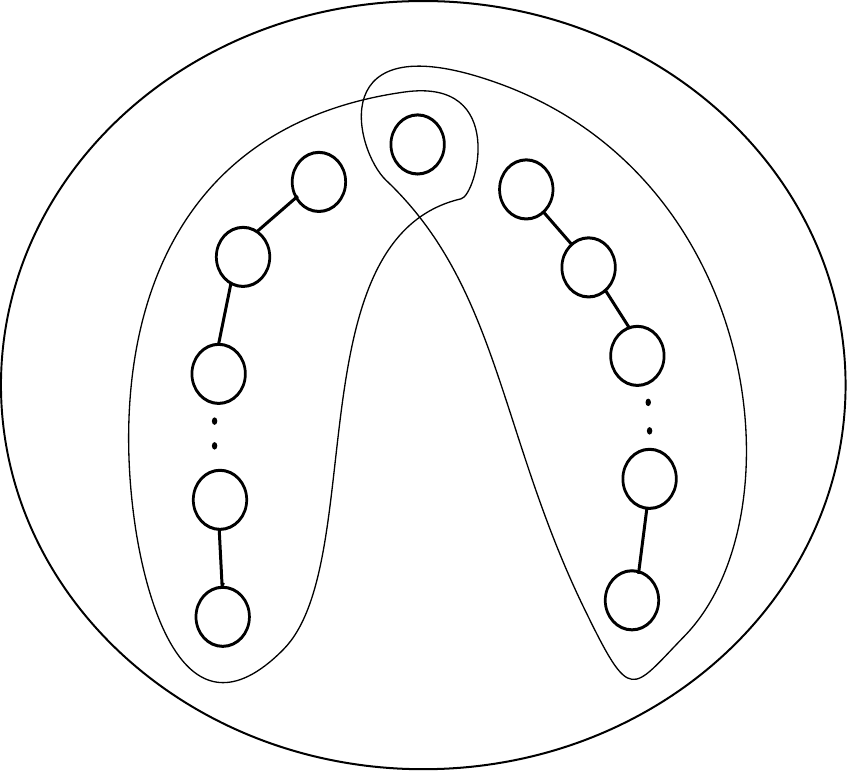}
\put(27, 110){$\alpha$}
\put(210, 140){$\beta$}
\put(60, 40){$b_1$}
\put(117, 178){$b_k$}
\put(178, 45){$b_n$}
\put(80, 155){$a_1$}
\put(70, 130){$a_2$}
\put(70, 60){$a_{k-2}$}
\put(150, 150){$c_1$}
\put(168, 127){$c_2$}
\put(151, 65){$c_{n-k-2}$}

\end{overpic}	

\caption{Figure shows the configuration of curves used in Lemma~\ref{combinatorial_lemma}. The arcs $a_i$ and $b_i$ can be cut along to get a disk with $3$ boundary components in the proof of Theorem~\ref{main_combinatorial_theorem}.}
\label{fig:normal_lantern_combinatorics}
\end{center}
\end{figure}

\begin{proof}
Let $l_i$ be the number of curves enclosing at least $2$ boundary components, each containing boundary component $b_i$. Following the argument given in Lemma~\ref{main_combinatorial_lemma}, we get $ 1 \leq l_i \leq (r_i +1) $ for all $i \neq k$ and $1 \leq l_k \leq 2$. We have the multiplicities $M_{ij} = 1$ for all $i,j$ satisfying $i,j < k$ or $i,j > k$,  $M_{ij} = 0$ for all $ i < k$, $j > k$ and $M_i = r_i + 1$ for all i, and $M_{ki} = 2$ for all $i$. Now we focus on the boundary component $b_k$. There are two cases:

\begin{enumerate}
\item \textbf{$l_k = 1$:} Let us call the curve $\gamma$. Since, $M_{ik} = 1$ for all $i$, we deduce that $\gamma$ encloses all the boundary components in the disk. This is a contradiction as $M_{1n} = 0$.

\item \textbf{$l_k = 2$:} Let us call the curves $\gamma_1$ and $\gamma_2$. Let us assume that $\gamma_1$ encloses boundary components $b_k, b_i, b_j$ such that $i < k$ and $j > k$. This is a contradiction to the fact that $M_{ij} = 0$ as observed before. So we can assume that $\gamma_1$ encloses boundary components $b_k, b_{i_1},\dots, b_{i_r}$, such that $i_j < k$ for all $j \in {1,\dots,r}$. If $\gamma_1$ does not enclose all the boundary components $b_1, \dots, b_k$, we can assume without loss of generality that $\gamma_1$ does not enclose boundary component $b_1$ at least. In this case, $\gamma_2$ will have to enclose boundary components $b_1,b_k, b_{k+1}, \dots, b_n$. A contradiction as $M_{1(k+1)} = 0$. So we get that $\gamma_1$ encloses boundary components $b_1,\dots,b_k$ and similarly $\gamma_2$ encloses boundary components $b_k, \dots, b_n$. 
\end{enumerate}

So we get that any factorization of $\Phi'$ has positive Dehn twists about curves $\alpha'$ and $\beta'$ as in the statement. We are left to prove that $l_i = 1$ for all $i \neq k$. If not, then without loss of generality we can assume that $l_1 \geq 2$. So the boundary components $b_1$ is enclosed by curves $\alpha'$ and at least one more curve, $\gamma$. By assumption $\gamma$ has to enclose at least one more boundary component, say $b_r$, other than $b_1$. It is  clear that $r > k$. But this is a contradiction as $M_{1r} = 0$. So we get that $l_i =1$ for all $ i \neq k$. Now the statement follows by simple count of multiplicities.

\end{proof}

\begin{remark}
This is a generalization of Lemma 2.1 of~\cite{Plamenevskaya_VHMorris_Planar_open_books}. It is also straightforward to see that the lemma holds in general for finitely many curves rather than just $2$ i.e. if $\alpha_1,\dots,\alpha_r$ are curves such that any, $\alpha_i,\alpha_j$ have exactly one boundary component in common (call it $b_k$) for every $i$ and $j$, then any factorization of $\Phi = \tau_{\alpha_1} \tau_{\alpha_2} \dots \tau_{\alpha_r} \tau_{b_1}^{s_1} \dots \tau_{b_{k-1}}^{s_{(k-1)}} \tau_{b_{k+1}}^{s_{(k+1)}} \dots \tau_{b_n}^{s_n}$ is of the form $$\Phi' = \tau_{\alpha_1'} \tau_{\alpha_2'} \dots \tau_{\alpha_r'} \tau_{b_1}^{s_1} \dots \tau_{b_{k-1}}^{s_{(k-1)}} \tau_{b_{k+1}}^{s_{(k+1)}} \dots \tau_{b_n}^{s_n}$$ such that $\alpha_i'$ encloses the same set of boundary components as $\alpha_i$.
\end{remark}

Now we are ready to prove the main theorem, which will be used to classify the Stein fillings of lens spaces in Theorem~\ref{virtually_overtwisted_lens_spaes}. 

\begin{theorem}
\label{main_combinatorial_theorem}
Let $\alpha $ and $\beta$ be the curves as shown in Figure~\ref{fig:normal_lantern_combinatorics}. Let $$\Phi = \tau_{\alpha} \tau_{\beta} \tau_{b_1}^{r_1} \tau_{b_2}^{r_2} \dots \tau_{b_{k-1}}^{r_{(k-1)}} \tau_{b_{k+1}}^{r_{(k+1)}} \dots \tau_{b_n}^{r_n}$$ be a monodromy such that $r_i \geq 1$ for all $i$. Let $\Phi'$ be any other positive factorization of $\Phi$. Then there exists a diffeomorphism $\psi$ such that $\Phi = \psi \Phi' \psi^{-1}$.

\end{theorem}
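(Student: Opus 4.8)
The plan is to bootstrap from Lemma~\ref{combinatorial_lemma} and Lemma~\ref{lantern_characterization}. By Lemma~\ref{combinatorial_lemma}, any positive factorization $\Phi'$ of $\Phi$ has the form $\tau_{\alpha'}\tau_{\beta'}\tau_{b_1}^{r_1}\cdots\tau_{b_{k-1}}^{r_{k-1}}\tau_{b_{k+1}}^{r_{k+1}}\cdots\tau_{b_n}^{r_n}$, where $\alpha'$ encloses $\{b_1,\dots,b_{k}\}$ and $\beta'$ encloses $\{b_k,\dots,b_n\}$, exactly as $\alpha$ and $\beta$ do (after reordering by Hurwitz moves, which is harmless since we only want conjugacy). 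So the content of the theorem is entirely in controlling the pair $(\alpha',\beta')$: I must produce a single diffeomorphism $\psi$ of $\mathbb{D}_n$ with $\psi(\alpha)\cong\alpha'$, $\psi(\beta)\cong\beta'$, and $\psi$ fixing (up to isotopy) each remaining boundary-parallel curve $b_i$, $i\neq k$, so that conjugation by $\psi$ carries the whole factorization to $\Phi'$.

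First I would reduce to the three-holed sphere. The arcs $a_1,\dots,a_{k-2}$ and $c_1,\dots,c_{n-k-2}$ drawn in Figure~\ref{fig:normal_lantern_combinatorics} are disjoint from $\alpha$, $\beta$ and from each other; cutting $\mathbb{D}_n$ along all of them produces a sphere with three boundary components — one coming from $\alpha$, one from $\beta$, and the outer boundary — in which $\alpha,\beta,\gamma$ (with $\gamma$ the third lantern curve, enclosing $b_2,\dots,b_n$ say, i.e. the curve making $\tau_\alpha\tau_\beta = \tau_{B_1}\tau_{B_2}\tau_{B_3}\tau_{B_4}\tau_\gamma^{-1}$ a lantern relation on a regular neighborhood of $\alpha\cup\beta$) realize the standard lantern picture. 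The key point is that the equality $\tau_\alpha\tau_\beta\cdot(\text{boundary twists}) = \tau_{\alpha'}\tau_{\beta'}\cdot(\text{the same boundary twists})$ forces $\tau_\alpha\tau_\beta = \tau_{\alpha'}\tau_{\beta'}$ after cancelling the central boundary twists (they are central, being boundary-parallel), hence by Proposition~\ref{lanetern_lemma} (Margalit / Hamidi-Tehrani) $i(\alpha',\beta')=2$ and $\alpha'\cup\beta'$ also spans a four-holed sphere with the same boundary data. Then I would invoke Lemma~\ref{lantern_characterization}: since $\alpha'$ and $\beta'$ enclose the same boundary components as $\alpha$ and $\beta$, there is a diffeomorphism $\psi_0$ of the relevant $\mathbb{D}_3$ (equivalently, of the cut-open surface) with $\alpha'\cong\psi_0(\alpha)$, $\beta'\cong\psi_0(\beta)$.

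The remaining work is to promote $\psi_0$, which a priori lives on the cut-open surface (or is only controlled on a subsurface bounded by $\gamma$), to a diffeomorphism $\psi$ of all of $\mathbb{D}_n$ that additionally does not disturb the boundary-parallel curves $b_i$ with $i\neq k$. This is a gluing argument: extend $\psi_0$ by the identity across the arcs we cut along, or equivalently observe that $\psi_0$ can be taken supported in a subsurface missing all the $b_i$'s ($i\neq k$), since any diffeomorphism supported in the subsurface bounded by $\gamma$ is a product of $\tau_\gamma$ and the two boundary twists interior to that subsurface, and these act trivially on the $b_i$. Conjugating $\Phi'$ by this $\psi$ then matches it with $\Phi$ term by term: $\psi\tau_{\alpha}\psi^{-1}=\tau_{\psi(\alpha)}=\tau_{\alpha'}$, $\psi\tau_\beta\psi^{-1}=\tau_{\beta'}$, and $\psi\tau_{b_i}\psi^{-1}=\tau_{b_i}$ for $i\neq k$. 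I expect the main obstacle to be exactly this last step — verifying carefully that the ambient diffeomorphism can be chosen supported away from the untouched boundary components while still realizing the prescribed action on $\alpha$ and $\beta$ — since Lemma~\ref{lantern_characterization} as stated only gives a diffeomorphism of $\mathbb{D}_3$, and one needs the cutting arcs of Figure~\ref{fig:normal_lantern_combinatorics} to be genuinely disjoint from $\alpha',\beta'$ up to isotopy, which in turn uses the intersection-number bounds from Proposition~\ref{lanetern_lemma} together with the change-of-coordinates principle for surfaces.
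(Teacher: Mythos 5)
Your proposal follows the same skeleton as the paper's proof (reduce via Lemma~\ref{combinatorial_lemma}, cancel the central boundary twists to get $\tau_{\alpha}\tau_{\beta}=\tau_{\alpha'}\tau_{\beta'}$, invoke Proposition~\ref{lanetern_lemma}, cut along the arcs of Figure~\ref{fig:normal_lantern_combinatorics}, and finish with the lantern characterization), but it has a genuine gap at the decisive step, which you yourself flag and then dispose of too quickly: the claim that $\alpha'$ and $\beta'$ can be isotoped off the arcs $a_i$ and $c_j$. Neither of the tools you cite can deliver this. Proposition~\ref{lanetern_lemma} only controls the intersection of $\alpha'$ with $\beta'$ (and with the third lantern curve inside a regular neighbourhood of $\alpha'\cup\beta'$); it says nothing about how $\alpha'$ or $\beta'$ sits relative to the fixed arcs. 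The change-of-coordinates principle produces \emph{some} homeomorphism carrying one configuration to another, so at best it shows $\alpha',\beta'$ are disjoint from the \emph{images} of the arcs under that homeomorphism, never from the arcs themselves. Without disjointness you cannot cut, so $\alpha',\beta'$ never land in the $\mathbb{D}_3$ where Lemma~\ref{lantern_characterization} (really Lemma~\ref{lantern_normal_inductive_step}) applies, and the second half of your argument has nothing to act on. The paper closes exactly this gap with a positivity argument: since $\alpha,\beta$ miss the arcs, $\tau_{\alpha}\tau_{\beta}=\tau_{\alpha'}\tau_{\beta'}$ fixes each arc up to isotopy; if $\beta'$ met an arc essentially, the positive twist $\tau_{\beta'}$ would move that arc strictly to the right, and $\tau_{\alpha'}$ would then have to move it back to the left, which is impossible because positive Dehn twists are right-veering~\cite{HKM_Right_Veering_1}. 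That right-veering input is the key idea of this step and is absent from your proposal.

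Two smaller points. Cutting $\mathbb{D}_n$ along the arcs yields a sphere with \emph{four} boundary circles (the $\mathbb{D}_3$ of Section~\ref{lantern_relations}), not three; the lantern does not live on a three-holed sphere. And the extension step you expect to be the main obstacle is actually the easy part: once you are in the cut surface, Lemma~\ref{lantern_normal_inductive_step} gives $\alpha'\cong\tau_{\gamma}^N(\alpha)$ and $\beta'\cong\tau_{\gamma}^N(\beta)$ for a single $N$, so the conjugator can be taken to be $\tau_{\gamma}^N$ (composed with a diffeomorphism preserving each boundary component), which is already a diffeomorphism of $\mathbb{D}_n$ supported away from the arcs; and conjugation by any diffeomorphism preserving the boundary components automatically fixes the boundary-parallel twists, since it carries $b_i$ to a curve isotopic to $b_i$. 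Finally, if you wanted to avoid right-veering altogether, your parenthetical remark that $\alpha'\cup\beta'$ spans a four-holed sphere with the same boundary data would have to carry the entire load: one would argue directly in $\mathbb{D}_n$, by change of coordinates applied to the complementary pieces of the two configurations, that some orientation-preserving diffeomorphism fixing each boundary component takes $(\alpha,\beta)$ to $(\alpha',\beta')$, and then conjugate. That would be a genuinely different route from the paper's, but as written you neither take it nor verify it, so as it stands the proof is incomplete.
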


\begin{proof}

By Lemma~\ref{combinatorial_lemma} above we have that $\Phi' = \tau_{\alpha'} \tau_{\beta'} \tau_{b_1}^{r_1} \tau_{b_2}^{r_2} \dots \tau_{b_{k-1}}^{r_{(k-1)}} \tau_{b_{k+1}}^{r_{(k+1)}} \dots \tau_{b_n}^{r_n}$ such that $\alpha', \beta'$ enclose the same set of boundary components as $\alpha, \beta$ respectively. Since the boundary Dehn twists do not change in any factorization, we need to find all possible choices for $\alpha'$ and $\beta'$ to get all factorizations of $\Phi$. Note that $\tau_{\alpha}\tau_{\beta} = \tau_{\alpha'} \tau_{\beta'}$. Since curves $\alpha$ and $\beta$ do not intersect any of the arcs $a_i, 1 \leq i \leq k-2$ and $c_j, 1 \leq j \leq n-k-2$ which are shown in Figure~\ref{fig:normal_lantern_combinatorics}, we know that $\tau_{\alpha} \tau_{\beta}$ does not move arcs $a_i$ and $c_j$. Hence it follows that $\tau_{\alpha'}\tau_{\beta'}$ does not move them. 

We claim that curves $\alpha'$ and $\beta'$ do not intersect arcs $a_i$ and $c_j$. To see this, we proceed by contradiction. Without loss of generality we can assume that $\beta'$ intersects arc say $a_1$. In this case the arc $a_1$ will be moved strictly to the right by $\tau_{\beta'}$. This follows from the fact that any positive factorization is right veering, see~\cite{HKM_Right_Veering_1} for details. Since $\tau_{\alpha'} \tau_{\beta'}$ does not move the arc $a_1$, it will have to be moved left by the other factor $\tau_{\alpha'}$ in $\tau_{\alpha'} \tau_{\beta'}$. This is not possible as every factor is positive and hence right veering. So we get a contradiction. Similarly we can prove that $\alpha'$ and $\beta'$ do not interest any of the arcs $a_i$ and $c_j$. Hence, $\alpha'$ and $\beta'$ live in the complement of arcs $a_i$ and $c_j$. So we can cut the surface along $a_i$ and $c_j$ to specify $\alpha'$ and $\beta'$. Now the result follows from lantern characterization in Lemma~	\ref{lantern_normal_inductive_step}. 

\end{proof}

Using this theorem, we can prove the classification of the Stein fillings of virtually overtwisted contact structures on $L(p,1)$ due to Plamenevskaya and Van Horn-Morris. In addition we can also reprove the classification of the Stein fillings on universally tight $L(p,1)$ due to McDuff~\cite{McDuff_Rational_ruled_surfaces}.

\begin{corollary}
\label{Corollay_McDuff_Classification}
Let $\xi$ be any tight contact structure on $L(p,1)$. Then

\begin{enumerate}
\item The contact structure $\xi$ has a unique Stein filling if $p \neq 4$ upto symplectomorphism. 
\item The universally tight contact structure on $L(4,1)$ has exactly two Stein fillings upto symplectomorphism. 
\item The virtually overtwisted contact structure on $L(4,1)$ has a unique Stein filling upto symplectomorphism.
\end{enumerate}
\end{corollary}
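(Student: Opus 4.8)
The plan is to combine Wendl's theorem with the combinatorial results of Sections~\ref{lantern_relations} and~\ref{combinatorial_arguments}, applied to explicit planar open books supporting the tight contact structures on $L(p,1)$; this is the $m=0$ specialization of the ideas behind Theorem~\ref{virtually_overtwisted_lens_spaes}, and it simultaneously recovers McDuff's classification in the universally tight case.

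First I would write down the supporting open books. By the classification of tight contact structures on lens spaces \cite{Giroux_lens_space_classification, Honda_Classification_1}, every tight $\xi$ on $L(p,1)$ is obtained by Legendrian surgery on a suitably stabilized unknot in $(S^3,\xi_{std})$ (the $m=0$ case of Figure~\ref{fig:legendrian_surgery_L(2p-1,2)}), the numbers of positive and negative stabilizations determining $\xi$. Running the surgery-to-open-book procedure of Section~\ref{background_surgery_open_book}, one checks that $\xi_{std}$ is supported by $(\mathbf{D}_{p-1},\,\tau_{b_1}\tau_{b_2}\cdots\tau_{b_p})$, the product of the Dehn twists about curves parallel to all $p$ boundary components of the $p$-holed sphere, while each virtually overtwisted structure is supported by an open book $(\mathbf{D}_n,\Phi)$ whose monodromy $\Phi=\tau_\alpha\tau_\beta\,\tau_{b_1}^{r_1}\cdots\tau_{b_{k-1}}^{r_{k-1}}\tau_{b_{k+1}}^{r_{k+1}}\cdots\tau_{b_n}^{r_n}$ has exactly the shape of Figure~\ref{fig:normal_lantern_combinatorics} and Theorem~\ref{main_combinatorial_theorem}, with the page, the curves $\alpha,\beta$, and the exponents read off from the stabilization data.

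By Wendl's theorem, as recalled in the introduction, classifying the Stein fillings of a planar-supported contact manifold up to symplectomorphism amounts to classifying the positive factorizations of its monodromy up to Hurwitz moves and global conjugation (a blow-up raises $b_2$, so does not occur among the minimal fillings being counted). For a virtually overtwisted $\xi$, Theorem~\ref{main_combinatorial_theorem} says every positive factorization of $\Phi$ is globally conjugate to $\Phi$, so the allowable Lefschetz fibration is unique and $\xi$ has a unique Stein filling — and when some exponent $r_i$ vanishes the same conclusion follows from Lemma~\ref{combinatorial_lemma} together with the lantern characterization of Lemma~\ref{lantern_normal_inductive_step}. This gives part (3) and the virtually overtwisted half of part (1). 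For $\xi_{std}$, Lemma~\ref{first_combinatorial_lemma} applied to $\tau_{b_1}\cdots\tau_{b_p}$ on $\mathbf{D}_{p-1}$ gives: if $p\neq 4$ then $p-1\leq 2$ or $p-1\geq 4$, so the only positive factorization is the boundary-parallel one and the Stein filling is unique (the tight half of part (1)); if $p=4$ then the page is $\mathbf{D}_3$ and there are exactly two Hurwitz classes, $\tau_{b_1}\tau_{b_2}\tau_{b_3}\tau_{b_4}$ and the lantern triple $\tau_\alpha\tau_\beta\tau_\gamma$, hence at most two Stein fillings. These two are genuinely different: the Lefschetz fibration with the four boundary-parallel vanishing cycles has $\chi = \chi(\mathbf{D}_3)+4 = 2$ (it is the $-4$ disk bundle over $S^2$, $b_2 = 1$), whereas the lantern substitution leaves three vanishing cycles and gives $\chi = \chi(\mathbf{D}_3)+3 = 1$, a rational homology ball with $b_2 = 0$; different Euler characteristics force them to be non-diffeomorphic, hence non-symplectomorphic, so there are exactly two. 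This is part (2).

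The main obstacle is the first step, pinning down the planar open books: verifying that the virtually overtwisted monodromies genuinely take the normal form of Theorem~\ref{main_combinatorial_theorem} — with $\alpha$ and $\beta$ pairwise sharing the single boundary component $b_k$ and carrying the right exponents (and dealing with a vanishing exponent as noted) — and keeping track of which stabilization datum realizes which isotopy class. Once the open books are identified, Theorem~\ref{main_combinatorial_theorem} and Lemma~\ref{first_combinatorial_lemma} perform the counting mechanically; the only remaining subtlety is the familiar one that Wendl's theorem gives only an upper bound for the number of fillings, which is precisely why the case $p=4$ needs the separate, elementary Euler-characteristic argument above.
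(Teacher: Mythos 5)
Your proposal is correct and follows essentially the same route as the paper: read off the supporting planar open books from the stabilized-unknot surgery pictures, then apply Lemma~\ref{first_combinatorial_lemma} in the universally tight case and Theorem~\ref{main_combinatorial_theorem} in the virtually overtwisted case, with Wendl's theorem converting the count of positive factorizations into a count of Stein fillings. Two small remarks: your explicit Euler-characteristic comparison of the two $L(4,1)$ fillings makes precise the ``exactly two'' that the paper leaves implicit, but conversely your claim of ``exactly two Hurwitz classes'' for $p=4$ needs Lemma~\ref{lantern_characterization} (not just Lemma~\ref{first_combinatorial_lemma}) to see that all lantern-type triples form a single class up to global diffeomorphism, which is exactly how the paper argues.
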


\begin{proof}
First consider the case when the contact structure is virtually overtwisted. For this we draw the open book decomposition as shown in Figure~\ref{fig:open_book_supporting_lens_space}. We describe the open book  first. The left picture shows annulus open book supporting $(S^3,\xi_{std})$ where the dotted curve $\alpha$ is the Dehn twist curve. The solid curve is the Legendrian unknot $tb=-1$, sitting on the page of this open books. Now one can stabilize this unknot $p$ times as shown in the right by curve $\beta$. Dotted curves are the stabilization curves which intersect the co-cores on the $1$-handles attached exactly once. By isotoping the whole surface we see that this is exactly the surface described by Figure~\ref{fig:normal_lantern_combinatorics}. Now it is easy to see that, monodromy for the contact structures on the lens spaces is given by $$\Phi = \tau_{\alpha} \tau_{\beta} \tau_{b_1} \tau_{b_2} \dots \tau_{b_{k-1}} \tau_{b_{k+1}} \dots \tau_{b_n}$$ where $\alpha$ and $\beta$ are curves as shown in Figure~\ref{fig:normal_lantern_combinatorics} with $n = (p-1)$. Hence the last statement of the theorem follows from Theorem~\ref{main_combinatorial_theorem}.

Now consider the case when the contact structure is universally tight on $L(p,1)$. In this case monodromy is given by $\Phi = \tau_{b_1} \tau_{b_2} \dots \tau_{b_n} \tau_{b_{n+1}}$ and $n = (p-1)$. By Lemma~\ref{first_combinatorial_lemma}, if $\Phi'$ is any positive factorization of $\Phi$, then $\Phi ' $ is $ \Phi$ except when $p =4$. 

When $\xi$ is a universally tight contact structure on $L(4,1)$. Then we know that either $\Phi = \tau_{b_1} \tau_{b_2} \tau_{b_3} \tau_{b_4}$ which by lantern relation is same as $\tau_{\alpha} \tau_{\beta} \tau_{\gamma}$, with $\alpha,\beta,\gamma$ as shown in Figure~\ref{fig:lantern_normal}. By Lemma~\ref{first_combinatorial_lemma} we know that any other factorization of $\Phi$ is of the form $ \tau_{\alpha'} \tau_{\beta'} \tau_{\gamma'}$ where $\alpha'$ is a curve that encloses boundary components $b_1$ and $b_2$, $\beta'$ is a curve that encloses boundary components $b_1$ and $b_3$ and $\gamma'$ is a curve that encloses boundary components $b_2$ and $b_3$. In the second case the lantern charazterization given in Lemma~\ref{lantern_characterization}, implies that there exists a diffeomorphism $\psi \in Map(\mathbb{D}_3,\partial \mathbb{D}_3)$ such that $\alpha' = \psi(\alpha), \beta' = \psi(\beta), \gamma' = \psi(\gamma)$. Hence, we get that there are exactly two factorizations of $\Phi$ upto diffeomorphism and exactly two Stein fillings upto symplectomorphism.

\end{proof}

After proving the known results using our techniques, we classify the Stein fillings of any contact structure on $L(p(m+1)+1,(m+1))$.

\begin{proof}[Proof of Theorem~\ref{virtually_overtwisted_lens_spaes}]

From the classification given in~\cite{Honda_Classification_1}, we can draw the Legendrian surgery diagram for various contact structures on $L(p(m+1)+1,(m+1))$ as shown in Figure~\ref{fig:legendrian_surgery_L(2p-1,2)}. Now we draw the open book decomposition corresponding to this contact manifold. This is exactly the same as described in Corollary~\ref{Corollay_McDuff_Classification}. Figure~\ref{fig:open_book_supporting_lens_space} shows the open book. The boundary parallel curves which are not dotted are $m$ Legendrian unknots with $tb =-1$. We see that this is exactly the surface described by Figure~\ref{fig:normal_lantern_combinatorics}. 

When $r=1$, the contact structure is universally tight. For the universally tight contact structure the monodromy is $\Phi = \tau_{b_1}^{m+1} \tau_{b_2} \dots \tau_{b_n}$ and the page of the open book decomposition is $\mathbb{D}_n$ with $n = p-1$. By Lemma~\ref{main_combinatorial_lemma} above, we have that any other factorization $\Phi' $ is $  \tau_{b_1}^{m+1} \tau_{b_2} \dots \tau_{b_n}$ when $ p > m+4$. Hence, we get the uniqueness of the Stein filling as the factorization is unique. When $p \leq m+4$, we have by Lemma~\ref{main_combinatorial_lemma}, that there is another factorization $\Phi ' =  \tau_{\alpha_1'} \tau_{\alpha_2'} \dots \tau_{\alpha_{p-2}'} \tau_{\gamma'} \tau_{b_1}^{(r-p+3)}$, where $\alpha_i'$ are curves enclosing boundary components $b_1$ and $b_i$ for every $i$ and $\gamma'$ is a curve enclosing boundary components $b_2,b_3,\dots,b_{p-1}$. So we have at least $2$ Stein fillings. This finishes the proof.

The open book decomposition for virtually overtwisted contact structures is given by Figure~\ref{fig:open_book_supporting_lens_space} with monodromy $\Phi = \tau_{\alpha} \tau_{\beta} \tau_{b_1} \tau_{b_2} \dots \tau_{b_{k-1}} \tau_{b_{k+1}} \dots \tau_{b_n}^{m+1}$, where $n = p-1$ and $\alpha$ and $\beta$ are curves as shown in Figure~\ref{fig:open_book_supporting_lens_space}. In this case, the uniqueness of the Stein filling follows exactly in the same way as above.

\begin{figure}[ht!]
\begin{center}
\begin{overpic}[tics=10]
{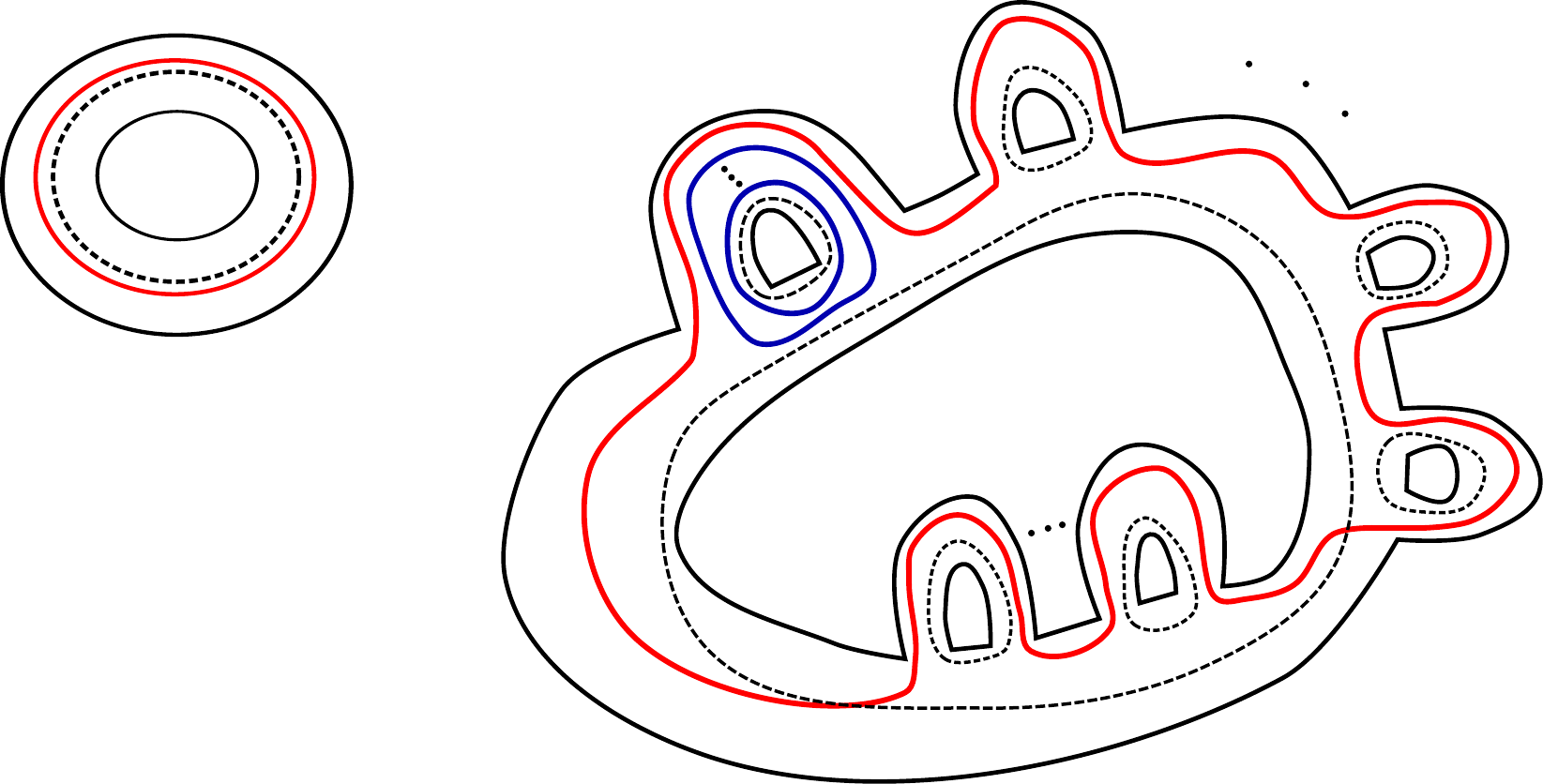}
\put(40,210){\textbf{$\alpha$}}
\put(170, 60){\textbf{$\beta$}}
\put(400,160){\textbf{$\alpha$}}
\put(230,188){\textbf{$m$}}

\end{overpic}	
\end{center}
\caption{Open book for $(S^3,\xi_{std})$ on left and open book supporting the lens space $L(p(m+1)+1,(m+1))$ is shown.}
\label{fig:open_book_supporting_lens_space}
\end{figure}
\end{proof}

\section{Finiteness of Euler Characteristics and Signature}
\label{euler_characteristic_signature}
In this section we prove Theorem~\ref{Finite_euler_characteristic} and Theorem~\ref{spinal_finiteness_of_euler_characteristic}.

\begin{proof} [Proof of Theorem~\ref{Finite_euler_characteristic}]
Let $(X,J)$ be a Stein filling of a contact manifold $(M,\xi)$ supported by planar open book $(\mathbf{D_n}, \Phi)$. It follows from Wendl's theorem that the handle decomposition for $X$ is given by one $0$-handle, $n$ $1$-handles and $2$-handles. The number of $2$-handles is given by the number of vanishing cycles. Hence if we bound the number of vanishing cycles we will have a bound on the Euler characteristic of the given Stein filling. In other words, if we can bound the number of Dehn twists in any positive factorization of $\Phi$ we get an upper bound on the Euler characteristic of $(X,J)$.

Let $M_i$ denote the multiplicity of each boundary component, as defined in Section~\ref{combinatorial_arguments}, in this factorization. By definition of multiplicity of a boundary component, in any factorization $\Phi'$ of $\Phi$ there cannot be more than $M_i$ positive Dehn twists about curves enclosing boundary component $b_i$. Hence, $M_1 + M_2 + \dots + M_n$ gives an upper bound on the number of two handles attached. This gives an upper bound on $\chi(X)$. 

Now we are left to bound the signature of the Stein filling. Recall that Euler characteristic of a Stein manifold $X$ can be written as $\chi(X) = 1 - b_1(X) + b_2^+(X) + b_2^-(X) + b_2^0(X)$. Using Theorem $4.1$ of~\cite{Etnyre_Planar_open_books}, which states that for a manifold supported by planar open book any Stein filling has vanishing $b_2^+$ and $b_2^0$, we get that $\chi(X) = 1- b_1(X) + b_2^-(X)$. Hence, $\sigma(X) + \chi(X) = 1 - b_1(X)$. Now for the $4$-manifold $X$, a simple homology computation shows that $|H_1(X)| \leq n$ and so $1-b_1(X)$ is bounded. It follows that $|\sigma(X) + \chi(X)| < M$ for some $M$. Now by finiteness of $\chi(X)$ we get that $|\sigma(X)| < M + |\chi(X)|$. Hence, there exists a constant $N$ such that $|\sigma(X)| < N$ and $|\chi(X)| < N$. It follows that $\mathcal{C}_{(M,\xi)}$ is finite.
\end{proof}

\begin{remark}
After proving the above theorem, the author found another proof of this result. Stipsicz~\cite{Stipsicz_Geoegraphy_of_stein_fillings} proved that $\mathcal{C}_{(M,\xi)}$ is finite for any manifold $(M,\xi)$ such that every Stein filling has $b_2^+ = 0$. Now the above mentioned theorem of Etnyre~\cite{Etnyre_Planar_open_books} implies that for any manifold supported by planar open book $b_2^+ = 0$. Hence, the theorem follows by combining these two results.

\end{remark}

Before we prove the 	finiteness results for spinal open books, we recall a theorem of Wand~\cite{Wand_Planar_Open_Books_Euler_Characteristic} and state it in a more general form suitable for applications to our purposes. The proof essentially is the same as given by Wand in~\cite{Wand_Planar_Open_Books_Euler_Characteristic} but we give a sketch here for completeness.

\begin{proposition}
\label{Wand_generalization_to_spinal_planar}
Let $(M,\xi)$ be a contact manifold supported by spinal open book with connected binding $\Sigma_{g,r}$ and connected fibers $\Sigma_{0,b}$. If $(X_1,J_1)$ and $(X_2,J_2)$ are any two Stein fillings of $(M,\xi)$, then $\chi(X_1) + \sigma(X_1) = \chi(X_2) + \sigma(X_2).$
\end{proposition}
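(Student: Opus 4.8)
The plan is to reduce the claim to an invariance statement about the topology of the Lefschetz fibrations produced by Theorem~\ref{spinal_planar_extension}, and then to compute $\chi + \sigma$ in purely combinatorial terms so that any ambiguity in the monodromy factorization cancels. First I would invoke Theorem~\ref{spinal_planar_extension}: each Stein filling $(X_i,J_i)$ of $(M,\xi)$ carries a Lefschetz fibration over a compact surface $B$ (the ``base'' of the spinal open book) with regular fiber the planar page $\Sigma_{0,b}$ and vanishing cycles $c_1^{(i)}, \dots, c_{k_i}^{(i)}$; the total space of the associated fibration-without-critical-points part, $\Sigma_{0,b} \times B$ glued to the spine pieces, is determined by the spinal open book alone, hence is the \emph{same} building block $Y$ for both fillings. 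Then $X_i = Y \cup (\text{2-handles along } c_j^{(i)} \text{ with framing } -1)$, so $\chi(X_i) = \chi(Y) + k_i$ and, since each $2$-handle is attached along a homologically understood curve, $\sigma(X_i) = \sigma(Y) + (\text{signature contribution of the intersection form on the span of the new } 2\text{-cycles})$.

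The key computational step is to show $\chi(X_i) + \sigma(X_i)$ does not depend on $i$. Write $\chi(X_i) + \sigma(X_i) = \chi(Y) + \sigma(Y) + k_i + s_i$, where $s_i$ is the signature of the symmetrized linking/intersection matrix $Q^{(i)}$ on the $2$-handles. The point is that $k_i + s_i$ can be read off from the multiplicities of the boundary-parallel curves in the monodromy factorization: each vanishing cycle that is \emph{not} boundary-parallel contributes to a negative-definite block (these are the handles that raise $b_2^-$ without raising $b_2^+$, exactly as in the planar open book case via Etnyre's $b_2^+ = b_2^0 = 0$ input), while the boundary-parallel ones are counted by the fixed multiplicities $M_i$ of the spinal open book. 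So $s_i = -(k_i - (\text{number of boundary-parallel vanishing cycles}))$ plus a correction determined by $Y$, and since the number of boundary-parallel vanishing cycles is pinned by the spinal open book (not by the choice of filling) by the same multiplicity argument used in the proof of Theorem~\ref{Finite_euler_characteristic}, the sum $k_i + s_i$ collapses to a quantity independent of $i$. Equivalently: $\chi + \sigma = 1 - b_1 + b_2^0 \equiv 1 - b_1$, and one argues $b_1(X_i)$ is determined by $H_1$ of the spinal open book complement, hence independent of the filling.

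The expected main obstacle is the signature bookkeeping for the spinal (non-disk base) case: unlike the disk-base open book, where the $2$-handles are attached to $\Sigma \times D^2$ with a clean $-1$-framing giving a negative-definite block, here the ambient $Y$ has more complicated $H_2$ coming from the spine and from $H_1(B)$, so I would need to check that the intersection form on $H_2(X_i)$ splits as (form on $H_2(Y)$) $\oplus$ (negative-definite block from vanishing cycles), with no mixed terms surviving after a suitable change of basis. Here I would lean on Wand's original argument in~\cite{Wand_Planar_Open_Books_Euler_Characteristic}: Wand shows the relevant quantity is a homological invariant of the boundary that can be extracted from the Lefschetz fibration handle structure via Novikov additivity of the signature together with the fact that the correction term depends only on the fiber, the framings, and $H_1(B)$ — all of which are fixed by $(M,\xi)$ and the spinal open book, not by the filling. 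Assembling these, $\chi(X_1) + \sigma(X_1) = \chi(X_2) + \sigma(X_2)$ follows. I would present this as a sketch, citing~\cite{Wand_Planar_Open_Books_Euler_Characteristic} for the details of the signature computation and emphasizing only the modifications needed to replace $D^2$ by the general base surface $B$.
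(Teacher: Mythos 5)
Your reduction has genuine gaps at the points where the actual work lies. The formula $\chi+\sigma = 1-b_1+b_2^0$ (and the further reduction to $1-b_1$) requires $b_2^+(X_i)=0$ (and $b_2^0(X_i)=0$), which you justify by appealing to Etnyre's vanishing theorem from~\cite{Etnyre_Planar_open_books}; but that theorem is about fillings of contact manifolds supported by \emph{planar open books}, not spinal open books with planar pages. In the spinal setting the filling is a Lefschetz fibration over a positive-genus base, no such vanishing is available, and your remaining assertion that $b_1(X_i)$ is ``determined by $H_1$ of the spinal open book complement'' is unproved and essentially as strong as the proposition itself. Two further steps fail as stated. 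First, the number of boundary-parallel vanishing cycles is \emph{not} pinned by the spinal open book: the lantern relation trades four boundary-parallel twists for three non-boundary-parallel ones, so different positive factorizations of the same monodromy can have different counts --- this is exactly the mechanism by which $\chi$ varies between fillings and must be compensated by $\sigma$. Second, the ``common building block $Y$'' is not common: Theorem~\ref{spinal_planar_extension} fixes only the total boundary monodromy, while the bundle part of the monodromy representation (the images $\alpha_j,\beta_j$ of the base generators, entering through $\prod_j[\alpha_j,\beta_j]$) can differ between the two fibrations, and the claimed orthogonal splitting of the intersection form of $X_i$ into (form on $Y$) $\oplus$ (negative-definite block on the $2$-handles) is precisely the hard point, which you defer to Wand without an argument.

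The paper's route is different and avoids all of this homological bookkeeping: any two fillings give two factorizations of the same monodromy, which differ by substitutions of relators of the planar mapping class group; by Endo--Nagami~\cite{Endo_Nagami_Signature_of_relators} an $r$-substitution changes the signature by the relator signature $I(r)$, and this is transported to fibrations with bounded fiber and bounded base by capping off to closed Lefschetz fibrations and using Novikov additivity (this is Wand's actual mechanism in~\cite{Wand_Planar_Open_Books_Euler_Characteristic}, not a ``homological invariant of the boundary''). Since every relator in a planar mapping class group is a concatenation of lantern relators, whose effect on $\chi$ cancels against the relator-signature contribution to $\sigma$, the combination $\chi+\sigma$ is unchanged under any resubstitution, which gives the statement. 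To repair your write-up you would either have to prove the vanishing and splitting statements you assume, or switch to this relator-calculus argument.
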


\begin{proof}
Let us denote by $X_{g,\lambda}^{h}$, a Lefschetz fibration over a closed surface $\Sigma_h$ with fibers $\Sigma_g$ and $\lambda$ is factorization of identity in $Map(\Sigma_h, \partial \Sigma_h)$. If $\lambda'$ is obtained from $\lambda$ by $r$-substitution ($r$ is a relator in the mapping class group of the surface $\Sigma_g$), then a result of Endo and Nagami~\cite{Endo_Nagami_Signature_of_relators} gives:

\[\sigma(X_{g,\lambda}^{h}) - \sigma(X_{g,\lambda'}^{h}) = I(r)\]

\noindent where $I(r)$ is the signature of the relator $r$ defined in~\cite{Endo_Nagami_Signature_of_relators}. 

With this set-up we can prove following statement which is essentially Theorem 4.4 of ~\cite{Wand_Planar_Open_Books_Euler_Characteristic}. 

\begin{lemma}

Let $\Sigma = \Sigma_{g,b}$ be a surface with boundary. Let $X_{\Sigma,\lambda}$ and $X_{\Sigma,\lambda'}$ be Lefschetz fibrations over $\Sigma_{h,m}$ such that $\lambda'$ is an $r$-substitution of $\lambda$. Then

\[\sigma(X_{\Sigma,\lambda}) - \sigma(X_{\Sigma,\lambda'}) = I(r).\]

\end{lemma} 

Starting with a Lefschetz fibration over non-closed surface $\Sigma_{h,m}$ we can construct the closed Lefschetz fibrations $X_{\bar{g},\lambda_1}^{h}$ and $X_{\bar{g},\lambda_1'}^{h}$ with genus of fibers $\bar{g} > g$, such that $\lambda_1'$ is obtained by an $r$-substitution from $\lambda_1$ in mapping class group of $\Sigma_{\bar{g}}$. To see this, $\Sigma_{0,b+1}$ be a sphere with $1$ more boundary component than the fibers $\Sigma_{g,b}$. Let $\Sigma'' = \Sigma_{g,b} \cup \Sigma_{0,b+1}$ where we glue the boundary components and let $\hat{\Sigma}$ denote the surface obtained by capping off the boundary component of $\Sigma''$ with a disk. As $\Sigma_{g,b}$ is a subsurface of $\Sigma''$ extending by identity on $\Sigma'' \backslash \Sigma_{g,b}$, both $\lambda$ and $\lambda'$ extend to mapping classes on $\Sigma''$. We still call the extensions $\lambda$ and $\lambda'$ in the new surface. These extensions are also related by $r$-substitution in the new mapping class group. It is a well known fact that, any positive mapping class $\Phi$ in a genus $p$ surface with $1$ boundary component can be written as $\Phi = \tau_{\delta}^N \hat{\Phi}$, where $\tau_{\delta}$ is Dehn twist about the boundary component and $N > 0 $ and $\hat{\Phi}$ is a negative mapping class, that is, given as factorization in terms of negative Dehn twists only. Applying this fact gives us $\lambda = \tau_{\delta}^N \hat{\lambda}$ and so $\lambda \circ \hat{\lambda}^{-1} = \tau_{\delta}^N$.  Hence, $\lambda_1 = \lambda \circ \hat{\lambda}^{-1}$ gives a positive factorization of identity in $Map(\hat{\Sigma})$ and hence a Lefschetz fibration $X_{\bar{g},\lambda_1}^{h}$. Similarly, $\lambda_1'= \lambda' \circ \hat{\lambda}^{-1}$ gives a Lefschetz fibration $X_{\bar{g},\lambda_1'}^{h}$. We note that, in getting the closed Lefschetz fibrations $X_{\bar{g},\lambda_1}^{h}$ and $X_{\bar{g},\lambda_1'}^{h}$, we have added the same compact $4$ manifold $Y$. We have, 	  

\[\sigma(X_{\bar{g},\lambda_1}^{h}) - \sigma(X_{\bar{g},\lambda_1'}^{h}) = I(r).\]

\noindent  So by Novikov additivity we have, $ I(r) = \sigma(X_{\bar{g},\lambda_1}^{h}) - \sigma(X_{\bar{g},\lambda_1'}^{h}) = \sigma(X_{\Sigma,\lambda}) + \sigma(Y)- \sigma(X_{\Sigma,\lambda'}) -\sigma(Y)$ and the result follows.

To get the result for spinal open books with planar fibers, we apply Theorem~\ref{spinal_planar_extension}, and note that any relator in a planar surface is a concatenation of lantern relator. Thus $I(r) =0$ and the result follows. See~\cite{Endo_Nagami_Signature_of_relators} for calculations of signatures of various relators in mapping class groups.

\end{proof}

Now we are ready to prove the finiteness of the Euler characteristic and the signature of the Stein filling of spinal open books. Just as in the proof of Theorem~\ref{Finite_euler_characteristic}, we will bound the number of $2$-handles to get an upper bound on the Euler characteristic of Stein filling $(X,J)$. 

\begin{proof}[Proof of Theorem~\ref{spinal_finiteness_of_euler_characteristic}]

The monodromy of a Lefschetz fibration over a genus $g$ surface with fiber $\Sigma_h$ is given by $$w = \prod_{i=1}^m \tau_{v_i} \prod_{j=1}^g [\alpha_j,\beta_j]$$ where $\alpha_1,\dots,\alpha_g,\beta_1,\dots,\beta_g$ are images of generators of fundamental group in $Map(\Sigma_h,\partial \Sigma_h)$  and  $\tau_{v_1},\dots, \tau_{v_m}$ are Dehn twists about vanishing cycles. To construct this manifold, we start with a surface bundle over the $g$ surface. This is described by the monodromy $\prod_{j=1}^g [\alpha_j,\beta_j]$. This manifold has a finite Euler characteristic independent of $\alpha_j$ and $\beta_j$. To get the Lefschetz fibration we attach $2$-handles along the vanishing cycles prescribed by curves $v_i$. Hence to bound the Euler characteristic again we need to bound the number of vanishing cycles. In the abelianization of the mapping class group of a planar surface $w$ has an image $\bar{\tau_{v_1}},\dots, \bar{\tau_{v_m}}$, where $\bar{\tau_{v_i}}$ is image of the Dehn twist in the abelianization. So to bound the Euler characteristic of the Stein filling, it suffices to bound the number of terms in the factorization $\bar{\tau_{v_1}},\dots, \bar{\tau_{v_m}}$. This is done in exactly the same way as in the proof of Theorem~\ref{Finite_euler_characteristic}. So we get the finiteness of $\chi(X)$. Now from Proposition~\ref{Wand_generalization_to_spinal_planar} we see that $\sigma(X)$ is also bounded.

\end{proof}

\section{Euler characteristic of sphere plumbings}
\label{Section:Sphere_Plumbings}
In this section we prove Theorem~\ref{Thm:Sphere_Plumbing}. Gay and Mark~\cite{Gay_Mark_Convex_Configuration} explicitly write down the open book decomposition for the boundary of a plumbing of spheres. The open book decomposition for $(M,\xi_{pl})$ which is contactomorphic to boundary of $(Z,\eta)$ which is a neighbourhood of spheres $C = C_1 \cup C_2 \cup \dots \cup C_n$, intersecting $\omega$-orthogonally, along the negative definite graph $\Gamma$ is given as follows. Recall from Section~\ref{introduction}, that we assume the row sum satisfies $s_i = \sum_j q_{ij} \leq 0$, where $Q = (q_{ij} = [C_i] \cdot [C_j]$ is the intersection matrix. Let $S$ be the result of connect summing $|s_i|$ copies of $D^2$ to each $C_i$ and then connect summing these surfaces according to $\Gamma$. It is clear from this construction that $S$ is a disk with a finitely many open disks removed from the interior when $C_i$ are all spheres. Let $\lbrace c_1,c_2,\dots,c_k \rbrace$ be the collection of simple closed curves on $S$ consisting of one curve around each connect sum neck. It is clear from the construction that $c_i$ are all disjoint. Let $\tau$ denote the product of Dehn twists along these curves. The following theorem is proved in~\cite{Gay_Mark_Convex_Configuration}.

\begin{theorem}
\label{Thm:Gay_Mark}
Any neighbourhood of $C$ contains a neighbourhood $(Z,\eta)$ of $C$ 	with strongly convex boundary, that admits a symplectic Lefschetz fibration $\pi: Z \rightarrow D^2$ having regular fiber $S$ and exactly one singular fiber $S_0 = \pi^{-1}(0)$. The vanishing cycles for $\pi$ are $c_1,\dots,c_k$ and the induced contact structure $\xi_{pl}$ on $\partial Z$ is supported by the induced $(S,\tau)$.
\end{theorem}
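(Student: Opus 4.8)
The plan is to establish the statement in two linked stages: reduce a neighbourhood of $C$ to an explicit plumbing model via a symplectic neighbourhood theorem, and then produce on that model the desired Lefschetz fibration together with a compatible symplectic form whose boundary is strongly convex and carries the open book $(S,\tau)$.

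First I would appeal to a symplectic neighbourhood theorem for configurations of symplectic surfaces meeting $\omega$-orthogonally: since the $C_i$ are symplectic spheres intersecting according to $\Gamma$, a closed regular neighbourhood of $C$ is symplectomorphic to a standard plumbing $Z_\Gamma$ of symplectic disc bundles --- the bundle over the $i$-th sphere having Euler number $q_{ii}$ --- with symplectic form determined by the areas of the $C_i$ and of the fibre discs. This reduces the problem to constructing the stated structure on $Z_\Gamma$, and shrinking the fibre discs (equivalently the base) then produces a model sitting inside any prescribed neighbourhood of $C$.

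Second I would build the Lefschetz fibration abstractly from the combinatorial data: take $\pi\colon Z\to D^2$ with a single singular fibre carrying $k$ nodes, so that the monodromy along $\partial D^2$ is $\tau=\tau_{c_1}\cdots\tau_{c_k}$ and the total space is obtained from $S\times D^2$ by attaching a $2$-handle along each $c_i$ (pushed to a point of $\partial D^2$) with framing one less than the page framing. Since $S$ is planar, $S\times D^2$ is a $0$-handle with a collection of $1$-handles matching $H_1(S)$, so $Z$ has a handle decomposition with one $0$-handle, those $1$-handles, and $k$ two-handles. The curves $c_i$ that encircle a single hole of $S$ are boundary-parallel, and I would cancel them against the $1$-handles of $S\times D^2$; the remaining $2$-handles then form a framed link, and the core computation is that --- after the attendant handle slides, using $s_i=\sum_j q_{ij}\le 0$ so that $|s_i|$ genuine discs get summed onto $C_i$ --- the framings come out to be exactly $q_{ii}$ and two handles are Hopf-linked precisely when the corresponding vertices of $\Gamma$ are joined. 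Equivalently, the singular fibre $S_0$ (namely $S$ with the $c_i$ collapsed to nodes) contains the configuration $C$, reconnected along the edge necks with the $q_{ii}$ read off from the fibration, so that $Z$ is a regular neighbourhood of $C$ realising the plumbing $Z_\Gamma$.

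Finally, for the symplectic and contact conclusions I would put a Gompf--Thurston symplectic form $\eta$ on $Z$ adapted to $\pi$: symplectic on the fibres, and near $\partial Z$ of the standard boundary form, so that $\partial Z$ is a hypersurface of contact type (strongly convex) whose induced contact structure is supported by the open book with page $S$ and monodromy the total monodromy $\tau$ of $\pi$; by the Giroux correspondence this contact structure is $\xi_{pl}$. One then checks, via the neighbourhood theorem, that after a rescaling $\eta$ agrees with $\omega$ on a neighbourhood of $C$, so $(Z,\eta)$ is genuinely a symplectic sub-neighbourhood of any given one. The main obstacle I anticipate is the simultaneous matching: on the topological side, pushing the Kirby calculus through the $1$-/$2$-handle cancellations while tracking framings to recover exactly $Q$; and on the symplectic side, choosing the Gompf form so that it is at once convex at the boundary with open book $(S,\tau)$ and identifiable with $\omega$ near $C$ --- reconciling the fibre-disc areas produced by the Gompf construction with the prescribed symplectic areas of the $C_i$ is where the normalisation from the neighbourhood theorem must be used carefully.
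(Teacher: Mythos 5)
This statement is not proved in the paper at all: it is Gay and Mark's theorem, quoted verbatim with the attribution ``The following theorem is proved in~\cite{Gay_Mark_Convex_Configuration},'' and the paper uses it as a black box in Section~\ref{Section:Sphere_Plumbings}. So there is no internal proof to compare your proposal against; what you have written is a reconstruction of the argument in the cited reference.

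As such a reconstruction, your outline is sound and follows the same broad architecture as Gay--Mark: build the Lefschetz fibration over $D^2$ with fiber $S$, all vanishing cycles $c_1,\dots,c_k$ in a single fiber, observe that the singular fiber $S_0$ obtained by collapsing the necks contains the configuration $C$, equip the total space with a fibration-adapted symplectic form that is convex at the boundary and induces the open book $(S,\tau)$, and then invoke uniqueness of symplectic neighbourhoods of $\omega$-orthogonal configurations (plus a shrinking argument) to place $(Z,\eta)$ inside any prescribed neighbourhood of $C$. Two remarks. First, your ``core computation'' via Kirby calculus (cancelling boundary-parallel vanishing cycles against $1$-handles and chasing framings to recover $Q$) can be replaced by a cleaner homological argument: a closed component $C_i$ of $S_0$ meets the rest of the fiber in $|s_i|+\deg_\Gamma(i)=-q_{ii}$ nodes and is disjoint from a nearby regular fiber, so $[C_i]^2=q_{ii}$ automatically; this is where the hypothesis $s_i=\sum_j q_{ij}\le 0$ enters. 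Second, the step you rightly flag as delicate --- making the convex form agree with (a rescaling of) $\omega$ near $C$ --- is exactly where a generic Thurston--Gompf form is not enough: one must arrange the symplectic areas of the spheres in the model to match the prescribed $\omega$-areas before the configuration neighbourhood theorem applies, which is why Gay and Mark construct their form explicitly rather than appealing to the generic construction, and then use the Liouville flow furnished by convexity to shrink $Z$ into the given neighbourhood. With those two points made precise, your plan is a faithful, correct sketch of the cited proof; it is not an alternative route.
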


To prove the Theorem~\ref{Thm:Sphere_Plumbing}, we first prove the following fact about positive factorizations in planar mapping class group. 

\begin{theorem}

Assume that $\phi \in Map(\mathbb{D}_n,\partial \mathbb{D}_n)$ can be written as a product of positive Dehn twists about disjoint curves, i.e. $\phi = \tau_{c_1} \tau_{c_2} \dots \tau_{c_k}$, such that $c_i$ are all disjoint. If $\tau_{d_1} \tau_{d_2} \dots \tau_{d_m}$  is any other positive factorization of $\phi$, then $m \leq k$. 

\end{theorem}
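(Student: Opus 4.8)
The plan is to exploit the multiplicity homomorphisms $M_i : Map(\mathbb{D}_n,\partial\mathbb{D}_n) \to \mathbb{Z}$ introduced in Section~\ref{combinatorial_arguments}, exactly as in the proof of Theorem~\ref{Finite_euler_characteristic}. Since $\phi = \tau_{c_1}\tau_{c_2}\cdots\tau_{c_k}$ with the $c_i$ pairwise disjoint, I would first observe that each $c_i$ is either boundary-parallel or encloses at least two boundary components, and compute $\sum_{i=1}^{n} M_i(\phi)$ directly from the disjoint factorization. The key point is that a Dehn twist about a curve enclosing a set $B$ of boundary components contributes $1$ to $M_j(\phi)$ for each $b_j \in B$, so $\sum_j M_j(\phi) = \sum_{i=1}^{k} |B_i|$, where $B_i$ is the set of boundary components enclosed by $c_i$ (with boundary-parallel curves counted as enclosing one component). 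On the other hand, in any positive factorization $\tau_{d_1}\cdots\tau_{d_m}$, each $\tau_{d_\ell}$ contributes at least $1$ to $\sum_j M_j$, so $m \le \sum_j M_j(\phi)$. This alone is not quite $m \le k$; the remaining work is to show $\sum_i |B_i| \le$ something that still lets us conclude, or better, to argue more carefully.

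**The refinement that gives $m \le k$.**
The honest approach is to use that the $c_i$ are disjoint, which forces their enclosed-boundary sets to be \emph{nested or disjoint} (a lamination-type statement in the planar surface). I would set up a bijective/injective assignment: to each twist $\tau_{d_\ell}$ in the new factorization assign one boundary component $b_{j(\ell)}$ that it "uses up," and show this can be done injectively into the multiset $\bigsqcup_i B_i$. Concretely, I would argue that because the $c_i$ are disjoint, the total multiplicity $\sum_j M_j(\phi)$ equals $k$ plus a correction, and in fact one should instead track, for a well-chosen single boundary component or a single "innermost" curve, that the count of $d_\ell$'s enclosing it is controlled. Cleaner still: induct on $n$. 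If some $c_i$ is boundary-parallel (say to $b_n$), then in any positive factorization the twists about curves enclosing $b_n$ have total $M_n(\phi)$; cap off $b_n$ to reduce to $\mathbb{D}_{n-1}$ with a factorization into disjoint curves having one fewer twist, and bookkeep the twists that died under capping. If no $c_i$ is boundary-parallel, pick an innermost $c_i$ enclosing exactly some minimal set of boundary components and run a similar capping/reduction argument. The base case $n \le 1$ is trivial since every twist is boundary-parallel and the factorization is unique.

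**Expected main obstacle.**
The delicate point is controlling what happens to the "other" factorization $\tau_{d_1}\cdots\tau_{d_m}$ under capping off a boundary component: a priori a $d_\ell$ could enclose $b_n$ together with other components, so capping $b_n$ does not simply delete $\tau_{d_\ell}$ but replaces it by a twist about a possibly-still-essential curve, and one must verify the number of twists that genuinely disappear is at least as large as the number by which $k$ drops. The right way to handle this is via the multiplicity inequality $m - (\text{twists surviving the cap}) \ge M_n(\phi) - (\text{contribution of }b_n\text{-enclosing }c_i\text{'s})$ combined with an induction hypothesis applied to the capped surface; making this bookkeeping precise, while keeping track of curves that become inessential versus those that merely lose a boundary component, is the crux. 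Everything else—the homomorphism property of $M_i$, the additivity of multiplicities over a factorization, and the triviality of the base case—is routine given the machinery already developed in Section~\ref{combinatorial_arguments}.
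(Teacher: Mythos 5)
Your proposal is a strategy outline rather than a proof, and the decisive step is precisely the one you label ``the crux'' and leave open. The multiplicity count in your first paragraph only yields $m \le \sum_j M_j(\phi) = \sum_i |B_i|$, which is strictly weaker than $m\le k$; the promised injective assignment of each $\tau_{d_\ell}$ to a boundary component is never constructed; and the reduction you sketch for the case in which no $c_i$ is boundary-parallel (``pick an innermost $c_i$ and run a similar capping/reduction argument'') is not well defined as stated, since the curves $d_\ell$ may intersect the chosen $c_i$, so one cannot cut or cap along it, and capping a hole inside it runs straight into the bookkeeping problem you acknowledge: a $d_\ell$ passing through the capped hole survives as an essential positive twist, so the $d$-count need not drop in step with the $c$-count. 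As written, the argument does not establish $m\le k$.

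For comparison, the paper closes exactly this gap by a double induction, on the number of holes $n$ \emph{and} on the number $k$ of disjoint curves, not on $n$ alone. The second induction hypothesis (the statement for factorizations into $k-1$ disjoint curves on the same $\mathbb{D}_n$) is what allows one to cancel a boundary-parallel twist occurring in \emph{both} factorizations. Assuming $m>k$, one then shows that no $\tau_{b_i}$ can occur among the $\tau_{c_j}$ (capping that hole drops the $c$-count by one but not the $d$-count, contradicting the $(n-1)$-hole case), and that a twist about \emph{every} boundary component, including the outer one $b_{n+1}$, must occur among the $\tau_{d_\ell}$ (otherwise capping drops neither count, again contradicting the $(n-1)$-hole case). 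The outer-boundary twist in the $d$-factorization forces every joint multiplicity $M_{ij}(\phi)\ge 1$, whereas the disjointness of the $c_i$ --- whose enclosed-hole sets are nested or disjoint, so some non-boundary-parallel $c_i$ encloses a proper subset of holes separated from the rest --- produces a pair of holes never enclosed together, i.e.\ some $M_{l_1 l_2}(\phi)=0$, a contradiction. This mechanism (the second induction parameter, the cancellation step, the forced boundary twists in the competing factorization, and the final joint-multiplicity contradiction) is absent from your sketch, and it is what actually turns the capping idea into a proof.
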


\begin{proof}
The proof is by induction on the number of holes, $n$, of the disk and the number of curves $k$. 

We start by proving the base cases. There are two base cases to be checked.

\begin{itemize}

\item \textbf{Disk with $1$ hole:} Let us denote the hole by $b_1$. Now $\Phi = \tau_{b_1}^p$ for some $p \in \mathbb{Z}_{\geq 0}$. Any other positive factorization has to be Dehn twists about the hole $b_1$. So the argument is trivial in this case.

\item  \textbf{Disk with $n>1$ holes and $k=1$:} In this case $\phi = \tau_{\alpha}$ for some curve $\alpha$. We can assume that the curve $\alpha$ encloses the boundary components $b_1,b_2,\dots,b_l$ for some $l \leq n$. If $l = n$ then the curve $\alpha$ is boundary parallel to the outer boundary component $b_{n+1}$ and so a positive factorization of $\phi$ is unique. So we can assume $ l < n$. In this case the joint multiplicity of $M_{ij} = 0 $ for $i \leq l$ and $j > l$. So a positive factorization of $\phi$ cannot include Dehn twists about curves enclosing any of the holes $b_{l+1},\dots, b_n$. Also since the multiplcity $M_i = 1$ of each holes $b_1,\dots,b_l$, we know that there can be no more than one curve enclosing holes any of the holes $b_1,\dots, b_l$ in any positive factorization of $\phi$. In addition, since the joint multiplicity of all these holes is $1$, so there must be exactly $1$ curve enclosing the holes $b_1,\dots,b_l$ in any positive factorization of $\phi$. 

\end{itemize}

Now by induction we assume that the theorem is true for:
\begin{enumerate}
 \item All planar surfaces with $n-1$ holes.
 \item Any $\phi$ in $Map(\mathbb{D}_n,\partial \mathbb{D}_n)$, with a positive factorization such that the Dehn twists are about $k-1$ disjoint curves in the surface $\mathbb{D}_n$.
 
 \end{enumerate}
 
In the surface $\mathbb{D}_n$, let $\phi = \tau_{c_1} \tau_{c_2} \dots \tau_{c_k}$ be an element of the mapping class group such that $c_i$'s are all disjoint. Let us assume that there is a positive factorization of $\phi$ given by $\tau_{d_1}\tau_{d_2} \dots \tau_{d_m}$ with $m > k$. If there is any hole $b_i$ with multiplicity $0$, then we can cap off the hole $b_i$ and this gives a contradiction to the induction hypothesis~$1$. So we can assume that multiplicity of each of the holes $b_1,\dots,b_{n}$ is at least~$1$. Let us start by looking at the hole~$b_1$. If there is a boundary parallel Dehn twist $\tau_{b_1}$ in both the factorizations of $\phi$ given above, then we can cancel the boundary Dehn twist and get a contradiction to the induction hypothesis~$2$. So $\tau_{b_1}$ can never appear in both the factorizations of $\phi$. Let us assume that $\tau_{b_1}$ appears in the factorization $\tau_{c_1} \tau_{c_2} \dots \tau_{c_k}$. In this case we can cap off the hole $b_1$ and get a contradiction to the induction hypothesis $1$, as capping off the hole $b_1$ reduces the number of factors in $\tau_{c_1} \tau_{c_2} \dots \tau_{c_k}$ by $1$, but does not reduce the number of factors in $\tau_{d_1}\tau_{d_2} \dots \tau_{d_m}$. Now we argue that there is at least one factor of $\tau_{b_1}$ in the positive factorization $\tau_{d_1}\tau_{d_2} \dots \tau_{d_m}$. If not then by capping off the hole $b_1$ we will not reduce the number of factors in both the factorizations of $\phi$. Again a contradiction to the induction hypothesis $1$. The same exact argument holds for each of the holes $b_1,\dots ,b_{n+1}$. So we know that there is at least $1$ factor of $\tau_{b_i}$ for each $i \in \lbrace 1, \dots, n+1 \rbrace$ in the factorization $\tau_{d_1}\tau_{d_2} \dots \tau_{d_m}$. This implies that $M_{ij} \geq 1$ for every $1 \leq i,j \leq n$ since there is a Dehn twist about the outer boundary component $b_{n+1}$ in $\tau_{d_1}\tau_{d_2} \dots \tau_{d_m}$.

Now since there are no Dehn twists about boundary parallel curves in $\tau_{c_1}\tau_{c_2} \dots \tau_{c_k}$, we know that there is a curve $\alpha$ in $c_1,\dots,c_k$ which encloses a proper subset of holes. Without loss of generality we can assume that $\alpha$ enclosed holes $b_1,\dots, b_l$ for some $l < n$. Since the curves $c_1,\dots, c_k$ are disjoint there is no curve enclosing any of the holes $b_1,\dots, b_l$ and at least one of the holes $b_{l+1},\dots,b_{n}$. If there is such a curve $\beta$, then $\beta$ will intersect $\alpha$ non trivially, which is not possible. So we can find at least two holes, amongst $b_1,\dots,b_n$, which are never enclosed together by any of the curves $c_1,\dots,c_k$. So there is a pair of holes, call them $b_{l_1}$ and $b_{l_2}$, such that $M_{l_1 l_2} = 0$. A contradiction to the fact that $M_{ij} \geq 1$ for every $1 \leq i,j \leq n$ observed above. 
\end{proof}

\begin{proof}[Proof of Theorem~\ref{Thm:Sphere_Plumbing}]

 If $(X,J)$ is any other strong symplectic filling of $(M,\xi_{pl})$, then it has an open book decomposition with page $S$ constructed above and the monodromy which is a positive factorization $\tau_{d_1}\tau_{d_2} \dots \tau_{d_m}$ of $\tau$. So we conclude that, $\chi(X) = 1 + (n-1) + m$. By the construction of $Z$ as described in the beginning of this section, we know that $\chi(Z) = 1 + (n-1) + k$. We know from the theorem above that $m \leq k$.  The proof follows easily.    

\end{proof}

\section{A Curious Example}

\begin{proof}[Proof of Theorem~\ref{curious_example}]
Let us consider a particular word in $Map(\mathbf{D_4}, \partial \mathbf{D_4})$. Let $a,b,c,d,\gamma$ be the curves shown in Figure~\ref{curious_example_fig}. Let $\Phi = \tau_a \tau_b \tau_c \tau_{b_1} \tau_{b_3} \tau_{b_4}$. We will find infinite factorizations of this word. Towards that end, note that $\tau_{b_1} \tau_{b_2} \tau_{b_3} \tau_{\gamma} \tau_d ^{-1} = \tau_a \tau_b $. Now conjugating by $\tau_d^{n}$ for $n \in \mathbb{Z}$, we get $\tau_{\tau_d^n(a)} \tau_{\tau_d^n(b)} = \tau_{b_1} \tau_{b_2} \tau_{b_3} \tau_{\gamma} \tau_d ^{-1}$. Hence we get $\tau_{\tau_d^n(a)} \tau_{\tau_d^n(b)} = \tau_a \tau_b$. So we get $\tau_{\tau_d^n(a)} \tau_{\tau_d^n(b)} \tau_c \tau_{b_1} \tau_{b_3} \tau_{b_4} = \tau_a \tau_b \tau_c \tau_{b_1} \tau_{b_3} \tau_{b_4}$. There can be no diffeomorphism which takes the left hand side of the above expression to the right hand side. If there is a diffeomorphism $\psi$ such that $\psi \tau_{\tau_d^n(a)} \tau_{\tau_d^n(b)} \tau_c \tau_{b_1} \tau_{b_3} \tau_{b_4} \psi^{-1}$ is $\tau_a \tau_b \tau_c \tau_{b_1} \tau_{b_3} \tau_{b_4}$, then we claim that this would imply that $\psi = \tau_d^{-n}$. This is a contradiction as in this case $\psi(c)$ is not isotopic to $ c$. 

To prove the claim, we proceed by contradiction. Assume that $\psi \neq \tau_{d}^{-n}$. Since $\psi$ does not move the curve $c$, it is easy to see that it does not move the arc $\alpha$ as shown in Figure~\ref{curious_example_fig}. Hence we get that the diffeomorphism $\psi$ is supported in the subsurface bounded by the curve $c$. This is impossible as $\psi(\tau_d^n(a))$ is isotopic to $a$ and $\psi(\tau_d^n(b))$ is isotopic to $b$. 

Hence we get infinitely many factorizations of the same monodromy. So potentially there are infinitely many Stein fillings of this manifold.

We can write down the surgery picture for the Stein filling. Observe that $(\mathbf{D}_4, \tau_a \tau_b \tau_{b_1} \tau_{b_3} \tau_{b_4})$ is an open book decomposition for virtually overtwisted lens space $L(4,1)$. Since the curve $c$ sits on the page of this open book decomposition, it represents a Legendrian knot, $L$ in the virtually overtwisted $L(4,1)$. Now the manifold supported by $(\mathbf{D}_4, \tau_a \tau_b \tau_c \tau_{b_1} \tau_{b_3} \tau_{b_4})$ is obtained by a Legendrian surgery along the Legendrian knot $L$, in virtually overtwisted lens space $L(4,1)$. This proves that there is a unique diffeomorphism type for the Stein filling of our $3$ manifold. Now knowing that for any knot there are only finitely many Legendrian representative upto contactomorphism in any tight contact manifold implies that there are only finitely many Stein fillings of the given $3$ manifold.

\begin{figure}[htb]
\begin{center}

\begin{overpic}[tics=10]
{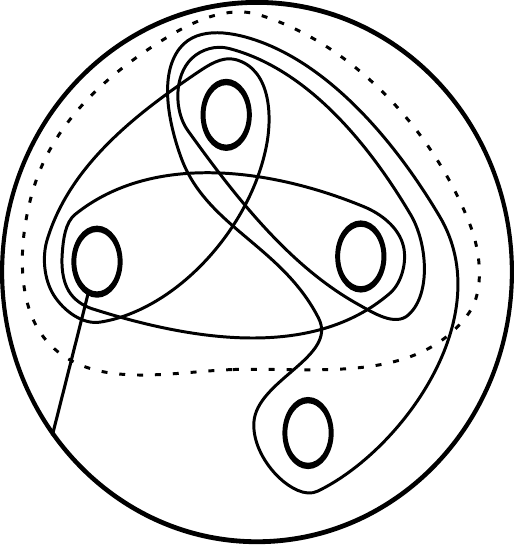}

\put(23, 78){$b_1$}
\put(62, 122){$b_2$}
\put(100, 79){$b_3$}
\put(85, 30){$b_4$}
\put(20, 110){$a$}
\put(92, 115){$b$}
\put(113, 25){$c$}
\put(60, 62){$d$}
\put(50, 40){$\gamma$}
\put(20, 35){$\alpha$}

\end{overpic}	
\caption{Curves $a,b,c,d,\gamma$ are used in getting infinite factorization of the word $\tau_a \tau_b \tau_c \tau_{b_1} \tau_{b_3} \tau_{b_4}$.}
\label{curious_example_fig}
\end{center}
\end{figure}

\end{proof}

\bibliography{myrefs}

\begin{thebibliography}{10}

\bibitem{Akbulut_Ozbagci_Stein_Surface_Lefschetz_fibrations}
Selman Akbulut and Burak Ozbagci.
\newblock Lefschetz fibrations on compact {S}tein surfaces.
\newblock {\em Geom. Topol.}, 5:319--334 (electronic), 2001.

\bibitem{Baker_Etnyre_VHMorris_Rational_open_books}
Kenneth~L. Baker, John~B. Etnyre, and Jeremy Van Horn-Morris.
\newblock Cabling, contact structures and mapping class monoids.
\newblock {\em J. Differential Geom.}, 90(1):1--80, 2012.

\bibitem{Baykur_VHMorris_infinitely_many_euler_characteristic}
R.~Inanc Baykur and Jeremy Van Horn-Morris.
\newblock Families of contact 3-manifolds with arbitrarily large stein
  fillings.
\newblock Submitted.

\bibitem{Baykur_VHMorris_Arbitrarily_Long_Factorizations}
R.~Inanc Baykur and Jeremy Van Horn-Morris.
\newblock Topological complexity of symplectic 4-manifolds and stein fillings.
\newblock Submitted.

\bibitem{Dalyan_Korkmaz_Pammuk_Infinitely_many_factorizations}
Elif Dalyan, Mustafa Korkmaz, and Mehmetcik Pamuk.
\newblock Arbitrarily long factorizations in mapping class groups.
\newblock Submitted.

\bibitem{Ding_Geiges_Legendrian_Surgery_Presentation_Contact_Structures}
Fan Ding and Hansj{\"o}rg Geiges.
\newblock A {L}egendrian surgery presentation of contact 3-manifolds.
\newblock {\em Math. Proc. Cambridge Philos. Soc.}, 136(3):583--598, 2004.

\bibitem{Eliashberg_Fraser_Unknot_Classification}
Yakov Eliashberg and Maia Fraser.
\newblock Topologically trivial {L}egendrian knots.
\newblock {\em J. Symplectic Geom.}, 7(2):77--127, 2009.

\bibitem{Endo_Nagami_Signature_of_relators}
Hisaaki Endo and Seiji Nagami.
\newblock Signature of relations in mapping class groups and non-holomorphic
  {L}efschetz fibrations.
\newblock {\em Trans. Amer. Math. Soc.}, 357(8):3179--3199, 2005.

\bibitem{Etnyre_Introductory_Lectures}
John~B. Etnyre.
\newblock Introductory lectures on contact geometry.
\newblock In {\em Topology and geometry of manifolds ({A}thens, {GA}, 2001)},
  volume~71 of {\em Proc. Sympos. Pure Math.}, pages 81--107. Amer. Math. Soc.,
  Providence, RI, 2003.

\bibitem{Etnyre_Planar_open_books}
John~B. Etnyre.
\newblock Planar open book decompositions and contact structures.
\newblock {\em Int. Math. Res. Not.}, (79):4255--4267, 2004.

\bibitem{Etnyre_Lectures_on_open_boon_decompositions}
John~B. Etnyre.
\newblock Lectures on open book decompositions and contact structures.
\newblock In {\em Floer homology, gauge theory, and low-dimensional topology},
  volume~5 of {\em Clay Math. Proc.}, pages 103--141. Amer. Math. Soc.,
  Providence, RI, 2006.

\bibitem{Farb_Margalit_Primer}
Benson Farb and Dan Margalit.
\newblock {\em A primer on mapping class groups}, volume~49 of {\em Princeton
  Mathematical Series}.
\newblock Princeton University Press, Princeton, NJ, 2012.

\bibitem{Gay_Mark_Convex_Configuration}
David Gay and Thomas~E. Mark.
\newblock Convex plumbings and {L}efschetz fibrations.
\newblock {\em J. Symplectic Geom.}, 11(3):363--375, 2013.

\bibitem{Geiges_Introduction_to_contact_geometry}
Hansj{\"o}rg Geiges.
\newblock {\em An introduction to contact topology}, volume 109 of {\em
  Cambridge Studies in Advanced Mathematics}.
\newblock Cambridge University Press, Cambridge, 2008.

\bibitem{Giroux_lens_space_classification}
Emmanuel Giroux.
\newblock Structures de contact en dimension trois et bifurcations des
  feuilletages de surfaces.
\newblock {\em Invent. Math.}, 141(3):615--689, 2000.

\bibitem{Giroux_Correspondence}
Emmanuel Giroux.
\newblock G\'eom\'etrie de contact: de la dimension trois vers les dimensions
  sup\'erieures.
\newblock In {\em Proceedings of the {I}nternational {C}ongress of
  {M}athematicians, {V}ol. {II} ({B}eijing, 2002)}, pages 405--414, Beijing,
  2002. Higher Ed. Press.

\bibitem{Gompf_Stipsicz}
Robert~E. Gompf and Andr{\'a}s~I. Stipsicz.
\newblock {\em {$4$}-manifolds and {K}irby calculus}, volume~20 of {\em
  Graduate Studies in Mathematics}.
\newblock American Mathematical Society, Providence, RI, 1999.

\bibitem{Hamidi_Tehrani_Lantern}
Hessam Hamidi-Tehrani.
\newblock Groups generated by positive multi-twists and the fake lantern
  problem.
\newblock {\em Algebr. Geom. Topol.}, 2:1155--1178 (electronic), 2002.

\bibitem{Honda_Classification_1}
Ko~Honda.
\newblock On the classification of tight contact structures. {I}.
\newblock {\em Geom. Topol.}, 4:309--368, 2000.

\bibitem{HKM_Right_Veering_1}
Ko~Honda, William~H. Kazez, and Gordana Mati{\'c}.
\newblock Right-veering diffeomorphisms of compact surfaces with boundary.
\newblock {\em Invent. Math.}, 169(2):427--449, 2007.

\bibitem{Kronheimer_Mrowka_Oszvath_Szabo}
P.~Kronheimer, T.~Mrowka, P.~Ozsv{\'a}th, and Z.~Szab{\'o}.
\newblock Monopoles and lens space surgeries.
\newblock {\em Ann. of Math. (2)}, 165(2):457--546, 2007.

\bibitem{Lisca_Classification_of_Stein_fillings_on_lens_spaces}
Paolo Lisca.
\newblock On symplectic fillings of lens spaces.
\newblock {\em Trans. Amer. Math. Soc.}, 360(2):765--799 (electronic), 2008.

\bibitem{Lisi_VHMorris_Wendl_Spinal_Open_Books}
Samuel Lisi, Jeremy Van Horn-Morris, and Chris Wendl.
\newblock On symplectic fillings of spinal open book decomposition.
\newblock prepring in preparation.

\bibitem{Loi_Piergallini_Lefschetz_fibrations}
Andrea Loi and Riccardo Piergallini.
\newblock Compact {S}tein surfaces with boundary as branched covers of {$B^4$}.
\newblock {\em Invent. Math.}, 143(2):325--348, 2001.

\bibitem{Margalit_Lantern_lemma}
Dan Margalit.
\newblock A lantern lemma.
\newblock {\em Algebr. Geom. Topol.}, 2:1179--1195 (electronic), 2002.

\bibitem{McDuff_Rational_ruled_surfaces}
Dusa McDuff.
\newblock The structure of rational and ruled symplectic {$4$}-manifolds.
\newblock {\em J. Amer. Math. Soc.}, 3(3):679--712, 1990.

\bibitem{Ohta_Ono_Classifications_of_Stein_fillings_1}
Hiroshi Ohta and Kaoru Ono.
\newblock Simple singularities and topology of symplectically filling
  {$4$}-manifold.
\newblock {\em Comment. Math. Helv.}, 74(4):575--590, 1999.

\bibitem{Ohta_Ono_Classifications_of_Stein_fillings_2}
Hiroshi Ohta and Kaoru Ono.
\newblock Simple singularities and symplectic fillings.
\newblock {\em J. Differential Geom.}, 69(1):1--42, 2005.

\bibitem{Plamenevskaya_Erratum}
Olga Plamenevskaya.
\newblock Contact structures with distinct {H}eegaard {F}loer invariants.
\newblock {\em Math. Res. Lett.}, 11(4):547--561, 2004.

\bibitem{Plamenevskaya_Legendrian_surgeries}
Olga Plamenevskaya.
\newblock On {L}egendrian surgeries between lens spaces.
\newblock {\em J. Symplectic Geom.}, 10(2):165--181, 2012.

\bibitem{Plamenevskaya_VHMorris_Planar_open_books}
Olga Plamenevskaya and Jeremy Van Horn-Morris.
\newblock Planar open books, monodromy factorizations and symplectic fillings.
\newblock {\em Geom. Topol.}, 14(4):2077--2101, 2010.

\bibitem{Schoenenberger_thesis_upenn}
Stephan Sch{\"o}nenberger.
\newblock Determining symplectic fillings from planar open books.
\newblock {\em J. Symplectic Geom.}, 5(1):19--41, 2007.

\bibitem{Starkston_Symplectic_Fillings}
Laura Starkston.
\newblock Symplectic fillings of seifert fibered spaces.
\newblock Submitted.

\bibitem{Stipsicz_Geoegraphy_of_stein_fillings}
Andr{\'a}s~I. Stipsicz.
\newblock On the geography of {S}tein fillings of certain 3-manifolds.
\newblock {\em Michigan Math. J.}, 51(2):327--337, 2003.

\bibitem{Wand_Positive_factorizations}
Andy Wand.
\newblock Factorizations of diffeomorphisms of compact surfaces with boundary.
\newblock Submitted.

\bibitem{Wand_Planar_Open_Books_Euler_Characteristic}
Andy Wand.
\newblock Mapping class group relations, {S}tein fillings, and planar open book
  decompositions.
\newblock {\em J. Topol.}, 5(1):1--14, 2012.

\bibitem{Wendl_Planar_open_books}
Chris Wendl.
\newblock Strongly fillable contact manifolds and {$J$}-holomorphic foliations.
\newblock {\em Duke Math. J.}, 151(3):337--384, 2010.

\end{thebibliography}

\printindex

\end{document}